\newtheorem{thm}{Theorem}[section]
\newtheorem{cor}[thm]{Corollary}
\newtheorem{lem}[thm]{Lemma}
\newtheorem{prop}[thm]{Proposition}
\numberwithin{equation}{section}
\theoremstyle{definition}
\newtheorem{definition}[thm]{Definition}
\newtheorem{rem}[thm]{Remark}
\begin{document}
 \title[Fujita exponents on quantum Euclidean spaces]{Fujita exponents on quantum Euclidean spaces}

\author[E. McDonald]{E. McDonald}
\address{
 Edward McDonald:
  \endgraf
 Laboratoire d’analyse et de math\'ematiques appliqu\'ees, Universit\'e Paris-Est Cr\'eteil
 \endgraf
 France
  \endgraf
    {\it E-mail address} {\rm eamcd92@gmail.com}
  } 
  
\author[M. Ruzhansky]{M. Ruzhansky}
\address{
 Michael Ruzhansky:
  \endgraf
 Department of Mathematics: Analysis, Logic and Discrete Mathematics,
  \endgraf
 Ghent University, Ghent,
 \endgraf
  Belgium 
  \endgraf
  and 
 \endgraf
 School of Mathematical Sciences, Queen Mary University of London, London,
 \endgraf
 UK  
 \endgraf
  {\it E-mail address} {\rm michael.ruzhansky@ugent.be}
  }

\author[S. Shaimardan]{S. Shaimardan}
\address{
  Serikbol Shaimardan:
  \endgraf
  Institute of Mathematics and Mathematical Modeling, 050010, Almaty, 
  \endgraf
  Kazakhstan 
  \endgraf
  {\it E-mail address} {\rm shaimardan.serik@gmail.com} 
  }

 \author[K. Tulenov]{K. Tulenov}
\address{
  Kanat Tulenov:
   \endgraf
School of Mathematics and Statistics, University of New South Wales, Kensington, NSW, 2052,
  \endgraf
 Australia
  \endgraf
  and
   \endgraf
  Institute of Mathematics and Mathematical Modeling, 050010, Almaty, 
  \endgraf
  Kazakhstan 
  \endgraf
  {\it E-mail address} {\rm kanat.tulenov@unsw.edu.au} 
  }

\date{}

\begin{abstract}
We study the well-posedness of non-linear heat equation with power nonlinearity with positive initial data on quantum Euclidean spaces. We prove a noncommutative analogue of the classical Fujita theorem by identifying the critical exponent separating finite-time blow-up from global existence for small initial data.        Moreover, we establish a fundamental inequality in general semifinite von Neumann algebras that is of independent interest and plays a crucial role in the study of global existence and local well-posedness of solutions of nonlinear equations in noncommutative setting. 
\end{abstract}

\subjclass[2020]{46L51, 46L52,  47L25, 11M55, 46E35, 42B15, 42B05, 43A50, 42A16, 32W30}

\keywords{Noncommutative Euclidean space, Fourier multipliers, Hausdorff-Young inequality, Sobolev embedding, heat semigroup.}

\maketitle

\tableofcontents
{\section{Introduction}}
The semilinear heat equation
\begin{equation}\label{eq:heat}
    \partial_t u + \Delta u = u^p,
    \qquad (t,x)\in (0,\infty)\times\mathbb{R}^d,\,d\geq2,\quad u(0)=u_0\ge0,
\end{equation}
where $1<p<\infty$ and $\Delta$ is the (positive) Laplacian in $\mathbb{R}^d,$
is a fundamental model for nonlinear diffusion, capturing the competition between the smoothing effect of the heat operator and the amplifying effect of the nonlinear term.  A classical result of Fujita \cite{Fujita1966} identifies the critical exponent (see, \cite[Theorem 18.1. p.113]{QS})
$$
p_F := 1 + \frac{2}{d}
$$
which separates finite-time blow-up when $1<p\le p_F$ from global existence for small initial data when $p>p_F$. The Fujita exponenent $p_F$ is related  to the self-similarity of the equation \eqref{eq:heat} under rescaling.

Following Fujita, many authors refined and extended the classical theory. Hayakawa \cite{Hayakawa1973} and Weissler \cite{Weissler1981} studied global existence and blow-up in critical and supercritical regimes; Galaktionov and Vázquez \cite{GalaktionovVazquez1995} developed a systematic analysis of blow-up; and Levine \cite{Levine1990} clarified the significance of critical exponents in nonlinear PDEs. A comprehensive account is given in the monograph of Quittner and Souplet \cite{QS}. Further extensions include more general nonlinearities, weighted problems, gradient terms, and equations on non-Euclidean geometries \cite{ HerreroVelazquez1993, GigaKohn1985, Levine1990, QS}, with applications ranging from population dynamics to chemical reactions and nonlinear thermal processes.  
More recently, the scope of Fujita-type results has broadened to non-Euclidean geometries. For example, in \cite{BRT}, \cite{FRT} authors studied the Fujita exponent for degenerate parabolic equations with power-law nonlinearities and semilinear heat equations on the Heisenberg group. The work \cite{RY} generalized Weissler’s classical result to sub-Riemannian manifolds and unimodular Lie groups. Similarly, the paper \cite{Pa98} investigated analogous problems on nilpotent groups. These developments highlight both the universality of Fujita’s exponent and the increased complexity arising in nonclassical settings. 

The purpose of the present paper is to investigate Fujita exponents in quantum Euclidean spaces. Noncommutative or quantum  Euclidean space $\mathbb{R}^d_{\theta},$ defined by an arbitrary antisymmetric real $d\times d$ matrix $\theta$, are also known as Moyal spaces.  Heuristically, $\mathbb{R}^d_\theta$ is a version of Euclidean space $\mathbb{R}^d$ where the coordinate functions $x_1,\ldots,x_d$ do not commute but instead obey the relation
\[
    x_jx_k-x_kx_j = i\theta_{j,k},\quad 1\leq j,k\leq d.
\]
In accordance with the philosophy of noncommutative geometry, the space $\mathbb{R}^d_\theta$ is studied via spaces of functions such as the algebra of essentially bounded functions $L^{\infty}(\mathbb{R}^d_\theta)$ and the Schwartz space $\mathcal{S}(\mathbb{R}^d_\theta).$ Quantum Euclidean spaces are fundamental examples in noncommutative geometry and serve as prototypical examples of noncompact spaces \cite{CGRS, GGSVM, green-book}.
Noncommutative Euclidean spaces have been defined by many authors, for example Rieffel \cite{Rieffel} introduced them as deformation quantisations of $\mathbb{R}^d$, while related constructions were earlier developed by Moyal \cite{M} and Groenewold \cite{Gro} in the physics literature. The idea of these constructions is to deform the algebra of smooth functions on $\mathbb{R}^d$ by replacing pointwise multiplication with the twisted Moyal product. Several equivalent constructions appear in the literature; in this paper we adopt the von Neumann algebra model $L^{\infty}(\mathbb{R}^d_{\theta})$ defined through a twisted left-regular representation of $\mathbb{R}^d$ on $L^2(\mathbb{R}^d)$.

Many analogues of tools from classical harmonic analysis tools have been adapted to $L^{\infty}(\mathbb{R}^d_\theta)$ \cite{GJM, GGSVM, GJP, HLW, MSX, Mc, RST}, including differential operators, Fourier multipliers, and function spaces. This makes it possible to study partial differential equations in a noncommutative space.

The noncommutative measure space $(L^{\infty}(\mathbb{R}^d_{\theta}),\tau_{\theta})$ is equipped with a semifinite normal trace $\tau_{\theta}$ on the von Neumann algebra $L^{\infty}(\mathbb{R}^d_{\theta}),$ which generalizes the Lebesgue integral (recovering it when $\theta = 0$).  The general theory of  noncommutative $L^p$-spaces associated with semifinite von Neumann algebras \cite{DPS, LSZ, PXu} allows the construction of noncommutative $L^p$-spaces, denoted $L^p(\mathbb{R}^d_\theta) := L^p(L^{\infty}(\mathbb{R}^d_\theta), \tau_{\theta})$. When $\theta = 0$, these spaces reduce to the classical $L^p$-spaces on $\mathbb{R}^d.$ 
We define the Laplace operator $\Delta_{\theta}$ on $L^{\infty}(\mathbb{R}^d_{\theta})$ by the formula
$$
\Delta_{\theta} = -(\partial_1^{\theta})^2   -\cdots -(\partial_d^{\theta})^2, 
$$
where the partial derivatives $\partial_i^{\theta}, i=1,2,\cdots,d$   are defined on smooth elements $x$ (see, Definition \ref{Definition_2}) by 
$$
\partial^{\theta}_j(x) = \frac{d}{ds_{j}}T_s(x)|_{s=0}, 
$$
with translations $T_s$ introduced in Definition \ref{Definition_1}. We refer the reader to \cite {MSX} and \cite{Mc} for a detailed information.

Significant progress has already been made in nonlinear PDEs on noncommutative spaces. Rosenberg \cite{R} analyzed nonlinear equations on the noncommutative torus, while Zhao \cite{Zhao} established Strichartz estimates for the Schrödinger equation on noncommutative Euclidean spaces. The paper \cite{Mc} developed a paradifferential calculus adapted to this setting and applied it to nonlinear evolution equations. In \cite{RST2}, the authors proved Sobolev and Gagliardo--Nirenberg inequalities on $L^{\infty}(\mathbb{R}^d_{\theta})$, yielding global well-posedness results for nonlinear PDEs. Likewise, \cite{RST1} established $L^p$--$L^q$ bounds for Fourier multipliers on $L^{\infty}(\mathbb{R}^d_{\theta})$ and applied them to heat, wave, and Schr\"odinger-type equations with Caputo fractional derivatives for $1<p\le2\le q<\infty$, obtaining corresponding well-posedness results. Similar developments for the noncommutative torus appear in \cite{STT}. The most recent developments concern Navier-Stokes equations \cite{CHWW}, and Besov spaces \cite{ChenHong}.
One motivation for studying PDE on quantum Euclidean spaces is that the noncommutativity results in some counterintuitive phenomena. For example when $\det(\theta)\neq 0,$ the Banach space $L^p(\mathbb{R}^d_\theta)$ is an algebra, and we have the nesting
\[
    L^p(\mathbb{R}^d_\theta)\subset L^q(\mathbb{R}^d_\theta),\quad p<q.
\]
Another feature, which motivated the present work, is that the power function $u\mapsto u^p$ on positive operators is not monotone when $p>1,$ and is not convex when $p>2.$

This paper is devoted to the study of the following analogy of \eqref{eq:heat}:
\begin{equation} \label{eq:nc-heat_problem}
\begin{aligned}
    \frac{\partial}{\partial t}u(t) &= -\Delta_{\theta} u(t) + u^p(t), \quad p>1,\; t>0,\\
    u(0) &= u_0,
\end{aligned}
\end{equation}
where $u_0$ is a positive operator in $L^{\infty}(\mathbb{R}^d_{\theta}).$  Our motivation for studying \eqref{eq:nc-heat_problem} is twofold:
\begin{enumerate}
    \item Fujita's original argument \cite{Fujita1966}, as well as later developments, rely heavily on the monotonicity of the function $u\mapsto u^p.$ However, when $p>1,$ this map is not monotone on positive operators. In particular, proving the contraction mapping required for the Banach fixed point theorem cannot rely on classical monotonicity arguments.
    \item{} The critical exponent $p_F$ can be identified by the self-similarity of \eqref{eq:heat} under rescaling. However, $L^{\infty}(\mathbb{R}^d_{\theta})$ does not have a scaling symmetry. Should we expect the same dichotomy to hold?
\end{enumerate}
 The main aim of this paper is to address the above two fundamental obstacles and to establish a noncommutative analogue of Fujita’s classical result, which recovers  Fujita’s theorem \cite{Fujita1966} (see also, \cite[Theorem 18.1. p.113]{QS}) in the commutative case $\theta=0.$ We employ methods from noncommutative analysis such as double operator integrals \cite{BirmanKaradzhovSolomyak}.

A key step in establishing local well-posedness is the inequality for any positive operators $u,v\in L^{p\cdot q}(\mathbb{R}^d_\theta)$ with  $1\le q \le\infty:$ 
\begin{equation}\label{Non_lin_0}
\|u^p - v^p\|_{L^{q}(\mathbb{R}^d_\theta)}
\le C_p \|u^{p-1}(u-v)+(u-v)v^{p-1}\|_{L^{q}(\mathbb{R}^d_\theta)},\, 1\leq p<\infty,
\end{equation}
where $C_p>0$ is a constant and $p\cdot q$ is the product of $p$ and $q.$ Note that this inequality is very easily obtained in the commutative case, but not at all trivial in the noncommutative case.

We establish \eqref{Non_lin_0} in the setting of general semifinite von Neumann algebras since we believe that it is of independent interest.  The estimate \eqref{Non_lin_0} plays a crucial role in the analysis of nonlinear partial differential equations, particularly in the study of global existence and local well-posedness of solutions of nonlinear PDEs in noncommutative setting. By this inequality, we can substantially extend the main results of \cite[Theorem 5.4. p.31]{RST2}, \cite[Theorem 4.2. p.85]{RST1} and \cite[Theorem 5.1.2. p.16]{STT} (see Remark \ref{Exp-remark} below) to the full range 
of $1\leq p<\infty$, which were previously known only for positive integer values of the parameter $p.$ 

It is likely that many of the results here can be generalised from $L^{\infty}(\mathbb{R}^d_\theta)$ to other semifinite von Neumann algebras equipped with a trace-preserving semigroup. We leave this generalisation to future work.

\bigskip

\section{Preliminaries}
 
 \subsection{Noncommutative (NC) Euclidean space $L^{\infty}(\mathbb{R}^{d}_{\theta})$} \label{NC Euclidean space}
We refer the reader to \cite{GJM}, \cite{GGSVM}, \cite{MSX}, \cite{Mc}, and \cite{LSMcZ} for a detailed discussion of the noncommutative (NC) Euclidean space $L^{\infty}(\mathbb{R}^d_{\theta})$ and for additional information. Throughout this paper, we assume that $d\geq 2.$

Let $H$ be a Hilbert space. We denote by $B(H)$ the algebra of all bounded linear operators on $H.$ As usual $L^p(\mathbb{R}^d)$ ($1\leq p<\infty$) are the $L^p$-spaces of pointwise almost-everywhere equivalence classes of $p$-integrable functions and $L^{\infty}(\mathbb{R}^d)$ is the space of essentially bounded functions on the Euclidean space $\mathbb{R}^d.$  
Given an integer $d\geq 2,$ fix an anti-symmetric $\mathbb{R}$-valued $d\times d$ matrix $\theta=\{\theta_{j,k}\}_{1\leq j,k\leq d}.$

\begin{definition} \cite[Definition 2.1]{Mc}\label{NC_E_space2} For $t\in\mathbb{R}^d,$ denote by $U_{\theta}(t)$ the operator on $L^{2}(\mathbb{R}^{d})$ defined by the formula
$$
(U_{\theta}(t)\xi)(s)=e^{i(t,s)}\xi(s-\frac{1}{2}\theta t),\quad \xi\in L^{2}(\mathbb{R}^{d}),  \, t,s\in \mathbb{R}^d
$$
where $(\cdot,\cdot)$ means the usual inner product in $\mathbb{R}^d.$
\end{definition}
It can be proved that the family $\{U_{\theta}(t)\}_{t\in \mathbb{R}^{d}}$ is strongly continuous and satisfies
\begin{equation}\label{weyl-relation}
U_{\theta}(t)U_{\theta}(s)=e^{\frac{1}{2}i(t,\theta s)}U_{\theta}(t+s),\quad t,s\in \mathbb{R}^d,
\end{equation}
The above relation is called the Weyl form of the canonical commutation relation. 

The von Neumann algebra $L^{\infty}(\mathbb{R}_{\theta}^d)$ is defined to be the weak operator topology closed subalgebra of $B(L^{2}(\mathbb{R}^{d}))$ generated by the family $\{U_{\theta}(t)\}_{t\in \mathbb{R}^{d}}.$
\begin{definition}\label{NC_E_space1}
Define $L^{\infty}(\mathbb{R}^{d}_{\theta})$ as the von Neumann algebra generated by the $d$-parameter strongly continuous unitary family $\{U_{\theta}(t)\}_{t\in \mathbb{R}^{d}}$ on $L^{2}(\mathbb{R}^d)$ from Definition \ref{NC_E_space2}. 
\end{definition}
It is also possible to define $L^{\infty}(\mathbb{R}_{\theta}^d)$ in an abstract operator-theoretic way more reminiscent of the conventional approach for quantum tori, see \cite{GGSVM}.

Note that if $\theta=\mathbf{0},$ where $\mathbf{0}$ is the zero matrix, then above definitions reduce to the description of $L^{\infty}(\mathbb{R}^{d})$ as the algebra of bounded pointwise multipliers on $L^{2}(\mathbb{R}^{d}).$ The structure of $L^{\infty}(\mathbb{R}_{\theta}^d)$ is determined by the Stone-von Neumann theorem, which implies that if $\det(\theta)\neq 0,$ then any two $C^*$-algebras generated by a strongly continuous unitary family $\{U_{\theta}(t)\}_{t\in \mathbb{R}^{d}}$ satisfying the Weyl relation \eqref{weyl-relation} are $*$-isomorphic \cite[Theorem 14.8]{H}.

Generally, the algebraic nature of the noncommutative Euclidean space $L^{\infty}(\mathbb{R}_{\theta}^d)$ depends on the dimension of the kernel of $\theta.$ In the case $d=2,$ up to an orthogonal conjugation $\theta$ may be given as 
\begin{equation}\label{d=2}
    \theta=h\begin{pmatrix}
0 & -1\\
1 & 0
\end{pmatrix} 
\end{equation}
for some constant $h>0.$ In this case, $L^{\infty}(\mathbb{R}_{\theta}^2)$ is $*$-isomorphic to $B(L^{2}(\mathbb{R}))$ and this $*$-isomorphism can be written as 
$$U_{\theta}(t)\to e^{it_1\mathbf{M}_s +it_2 h \frac{d}{ds}},$$
where $\mathbf{M}_s\xi(s)=s\xi(s)$ and $\frac{d}{ds}\xi(s)=\xi'(s)$ is the differentiation operator.
If $d\geq 2,$ then an arbitrary $d\times d$ antisymmetric real matrix can be expressed (up to orthogonal conjugation) as a direct sum of a zero matrix and matrices of the form \eqref{d=2}, ultimately leading to the $*$-isomorphism
\begin{equation}\label{direct-sum}
    L^{\infty}(\mathbb{R}_{\theta}^d)\cong L^{\infty}(\mathbb{R}^{\dim(\ker(\theta))})\bar{\otimes}B(L^{2}(\mathbb{R}^{\text{rank}(\theta)/2})),
\end{equation}
where $\bar{\otimes}$ is the von Neumann algebra tensor product \cite[Corollary 6.4]{LeSZ}.. In particular, if $\det(\theta)\neq 0,$ then \eqref{direct-sum} reduces to
\begin{equation}\label{reduced-direct-sum}
    L^{\infty}(\mathbb{R}_{\theta}^d)\cong B(L^{2}(\mathbb{R}^{d/2})).
\end{equation}
Note that these formulas are meaningful since the rank of an anti-symmetric matrix is always even.
\subsection{Noncommutative integration} Let $f\in L^{1}(\mathbb{R}^d).$ Define $\lambda_{\theta}(f)$ as the operator defined by the formula
\begin{equation}\label{def-integration}
\lambda_{\theta}(f)\xi=\int_{\mathbb{R}^d}f(t)U_{\theta}(t)\xi dt, \quad \xi\in L^{2}(\mathbb{R}^d).
\end{equation}
This integral is absolutely convergent in the $L^2(\mathbb{R}^d)$-valued Bochner sense, and defines a bounded linear operator $\lambda_{\theta}(f): L^{2}(\mathbb{R}^d)\to L^{2}(\mathbb{R}^d)$ such that $\lambda_{\theta}(f)\in L^{\infty}(\mathbb{R}_{\theta}^d)$ (see, \cite[Lemma 2.3]{MSX}).  Denote by $\mathcal{S}(\mathbb{R}^d)$ the Schwartz space on $\mathbb{R}^d.$  The noncommutative Schwartz space $\mathcal{S}(\mathbb{R}_{\theta}^d)$ is defined as the image of the classical Schwartz space under $\lambda_{\theta},$ which is
\begin{equation}\label{NC_Schwartz}\mathcal{S}(\mathbb{R}_{\theta}^d):=\{x\in L^{\infty}(\mathbb{R}_{\theta}^d):x=\lambda_{\theta}(f) \,\ \text{for some}\,\ f\in \mathcal{S}(\mathbb{R}^d)\}.
\end{equation}
We define a topology on $\mathcal{S}(\mathbb{R}_{\theta}^d)$
 as the image of the canonical Fr\'{e}chet topology on $\mathcal{S}(\mathbb{R}^d)$ under $\lambda_{\theta}.$ The topological dual of $\mathcal{S}(\mathbb{R}_{\theta}^d)$ will be denoted by $\mathcal{S}'(\mathbb{R}_{\theta}^d).$ The mapping  $\lambda_\theta:\mathcal{S}(\mathbb{R}^d)\to \mathcal{S}(\mathbb{R}^d_\theta)$ extends to a bijection from 
$\mathcal{S}'(\mathbb{R}^d)$ to $\mathcal{S}'(\mathbb{R}^d_\theta)$ (see, \cite[Subsection 2.2.3]{MSX}). For $f\in\mathcal{S}(\mathbb{R}^d)$ we normalise the Fourier transform as
$$\widehat{f}(t)=\int_{\mathbb{R}^d}f(s)e^{-i(t,s)}ds, \quad t\in \mathbb{R}^d.$$

\subsection{Trace on $L^{\infty}(\mathbb{R}_{\theta}^d)$} Given $f\in \mathcal{S}(\mathbb{R}^d),$ we
define the functional $\tau_{\theta}:\mathcal{S}(\mathbb{R}_{\theta}^d)\to \mathbb{C}$ by the formula
\begin{equation}\label{def_trace}
\tau_{\theta}(\lambda_{\theta}(f)):=(2\pi)^df(0).
\end{equation}
Define the bilinear and sesquilinear pairings
\[
    (u,v) := \tau_\theta(uv),\quad \langle u,v\rangle = \tau_\theta(u^*v).
\]
We have the Plancherel identities
$f \in \mathcal{S}'(\mathbb{R}^d)$,
\begin{equation} \label{Main_eqlation1}
\bigl( \lambda_\theta(f), \lambda_\theta(g) \bigr) := (f, \widetilde{g}), \qquad 
\text{for all } g \in \mathcal{S}(\mathbb{R}^d).
\end{equation}
where $\widetilde{g}(t) = g(-t)$ and
\begin{equation}\label{plancherel}
    \langle \lambda_\theta(f),\lambda_\theta(g)\rangle = \langle f,g\rangle_{L^2(\mathbb{R}^d)}.
\end{equation} 

We write the pairing of $\mathcal{S}'(\mathbb{R}^d_\theta)$ and $\mathcal{S}(\mathbb{R}^d_\theta)$ by the same symbol, in order that \eqref{Main_eqlation1} also holds for $f \in \mathcal{S}'(\mathbb{R}^d)$ and $g \in \mathcal{S}(\mathbb{R}^d).$

The functional $\tau_{\theta}$ admits an extension to a semifinite faithful normal trace on $L^{\infty}(\mathbb{R}_{\theta}^d).$ Moreover, if $\theta=\mathbf{0},$   then $\tau_{\theta}$ is the Lebesgue integral. If $\det(\theta)\neq0,$
then $\tau_{\theta}$ is (up to a normalisation) the operator trace on $B(L^2(\mathbb{R}^{d/2})).$ For more details, we refer to \cite{GJP}, \cite[Lemma 2.7]{MSX}, \cite[Theorem 2.6]{Mc}.

\subsection{Noncommutative $L^{p}(\mathbb{R}^{d}_{\theta})$ spaces}
With the definitions in the previous sections,  $L^{\infty}(\mathbb{R}^{d}_{\theta})$ is a semifinite von Neumann algebra with the trace $\tau_{\theta},$ and the pair $(L^{\infty}(\mathbb{R}^{d}_{\theta}),\tau_{\theta})$ is called a noncommutative measure space. For any $1\leq p<\infty,$ we can define the $L^p$-norm on this space by the Borel functional calculus and the following formula:
$$\|x\|_{L^p(\mathbb{R}^d_{\theta})}=\Big(\tau_{\theta}(|x|^p)\Big)^{1/p},\quad x\in L^{\infty}(\mathbb{R}^{d}_{\theta}), $$
where $|x|:=(x^{*}x)^{1/2}.$
The completion of $\{x\in L^{\infty}(\mathbb{R}^{d}_{\theta}) \;:\;\|x\|_{p}<\infty\}$ with respect to $\|\cdot\|_{L^p(\mathbb{R}^d_{\theta})}$ is denoted by $L^p(\mathbb{R}^d_{\theta}).$ The elements of $L^p(\mathbb{R}^d_{\theta})$ are $\tau_{\theta}$-measurable operators like in the commutative case. These are linear densely defined closed (possibly unbounded) affiliated with $L^{\infty}(\mathbb{R}^{d}_{\theta})$ operators such that $\tau_{\theta}(\mathbf{1}_{(s,\infty)}(|x|))<\infty$ for some $s>0.$ Here, $\mathbf{1}_{(s,\infty)}(|x|)$ is the spectral projection with respect to the interval $(s,\infty).$  

The Schwartz space $\mathcal{S}(\mathbb{R}^d_\theta)$ is dense in $L^p(\mathbb{R}^d_\theta)$ for all $p<\infty,$ see \cite[Proposition 3.14]{MSX}.
The identity \eqref{plancherel} implies that $\lambda_\theta$ extends to an isometric isomorphism
\[
    \lambda_\theta:L^2(\mathbb{R}^d)\to L^2(\mathbb{R}^d_\theta).
\]
Let us denote the set of all 
$\tau_{\theta}$-measurable operators by $L^{0}(\mathbb{R}^{d}_{\theta}).$ 
Let $x=x^{\ast}\in L^{0}(\mathbb{R}^{d}_{\theta})$.  The {\it distribution function} of $x$ is defined by
$$n_x(s)=\tau_{\theta}\left(\mathbf{1}_{(s,\infty)}(x)\right), \quad -\infty<s<\infty.$$
For $x\in L^{0}(\mathbb{R}^{d}_{\theta}),$ the {\it generalised singular value function} $\mu(t, x)$ of $x$ is defined by
\begin{equation}\label{distribution-function}
\mu(t,x)=\inf\left\{s>0: n_{|x|}(s)\leq t\right\}, \quad t>0.
\end{equation}
The function $t\mapsto\mu(t,x)$ is decreasing and right-continuous. For more discussion on generalised singular value functions, we refer the reader to
\cite{DPS, FK, LSZ}.
The norm of $L^p(\mathbb{R}^d_{\theta})$ can also be written in terms of the generalised singular value function (see \cite[Example 2.4.2, p. 53]{LSZ}) as follows 
\begin{equation}\label{mu-norm}
\|x\|_{L^p(\mathbb{R}^d_{\theta})}=\left(\int_{0}^{\infty}\mu^{p}(s,x)ds\right)^{1/p}, \,\ \text{for} \,\ p<\infty\,\ \text{and} \,\,
\|x\|_{L^{\infty}(\mathbb{R}^d_{\theta})}=\mu(0,x), \,\, \text{for}\,\ p=\infty.
\end{equation}
The latter equality for $p=\infty,$ was proved in \cite[Lemma 2.3.12. (b), p. 50]{LSZ}.

The space $L^{0}(\mathbb{R}^{d}_{\theta})$ is a $*$-algebra, which can be made into a topological $*$-algebra as follows. Let
$$V(\varepsilon,\delta)=\{x\in L^{0}(\mathbb{R}^{d}_{\theta}): \mu(\varepsilon,x)\leq \delta\}.$$
Then $\{V(\varepsilon,\delta): \varepsilon,\delta>0\}$ is a system of neighbourhoods at 0 for which $L^{0}(\mathbb{R}^{d}_{\theta})$ becomes a metrizable topological $*$-algebra. The convergence with respect to this topology is called the {\it convergence in measure} \cite{PXu}. For the theory of $L^p$ spaces corresponding to general semifinite von Neumann algebras, we refer the reader to  \cite{DPS}, \cite{LSZ}, \cite{PXu}.

\subsection{Differential calculus on $L^{\infty}(\mathbb{R}^{d}_{\theta})$}  We embed the space $L^1(\mathbb{R}^d_{\theta})+ L^\infty(\mathbb{R}^d_{\theta})$  into $\mathcal{S}'(\mathbb{R}_{\theta}^d)$ via
\begin{equation}\label{Main_relation2}
(u, v):=\tau_{\theta}(uv),\quad u\in L^1(\mathbb{R}^d_{\theta})+ L^\infty(\mathbb{R}^d_{\theta}), \quad v\in \mathcal{S}(\mathbb{R}_{\theta}^d).
\end{equation}

\begin{definition}\label{Definition_1} For $t \in\mathbb{R}^d,$ define $T_t$ as
the automorphism of $L^\infty(\mathbb{R}^d_\theta)$ specified by
\[
    T_t(U_\theta(s)) = \exp(i(t,s))U_{\theta}(s),\quad s \in \mathbb{R}^d.
\]
More generally, if $x\in\mathcal{S}'(\mathbb{R}_{\theta}^d),$ define $T_t( x )$ as the distribution given by
$$
(T_t (x), y) = ( x, T_{-t}(y)), \quad y\in\mathcal{S}(\mathbb{R}^d).
$$
\end{definition} 
That $T_t(x)$ is a well-defined distribution for all 
 $x\in\mathcal{S}'(\mathbb{R}_{\theta}^d)$ is a straightforward consequence of the observation that $T_t$ is continuous in every seminorm which defines
the topology of $\mathcal{S}(\mathbb{R}_{\theta}^d).$ Moreover, it is a trivial matter to verify that $T_t$ is an isometry in
every $L^p(\mathbb{R}^d_{\theta})$ for $0 < p \le \infty.$ In terms of the map $\lambda_{\theta},$ we have:
\begin{equation}\label{U-property}
T_t\lambda_{\theta}(f) = \lambda_{\theta}(e^{i(t,\cdot)}f(\cdot)),\quad f \in \mathcal{S}(\mathbb{R}^d).
\end{equation}

\begin{definition}\label{Definition_2} (\cite[Definition 2.9]{Mc}) An element $x \in L^{1}(\mathbb{R}^d_\theta) + L^{\infty}(\mathbb{R}^d_\theta)$   is said to be smooth if for all $y \in L^{1}(\mathbb{R}^d_\theta) \cap L^{\infty}(\mathbb{R}^d_\theta)$ the function $s \mapsto \tau_{\theta}(yT_s(x))$ is smooth.
\end{definition}

The partial derivations $\partial^{\theta}_j,  j = 1, \dots, d,$ are defined on smooth elements $x$ by 
$$
\partial^{\theta}_j(x) = \frac{d}{ds_{j}}T_s(x)|_{s=0}. 
$$
From \eqref{def-translations} and \eqref{def-integration} it is easily verified that 
\begin{equation}\label{partial_derivative}
\partial^{\theta}_j(x)\overset{\eqref{def-integration}}{=}\partial^{\theta}_j\lambda_{\theta}(f)\overset{\eqref{def-translations}}{=}\lambda_{\theta}(it_{j}f(t)), \quad  j=1,\cdots,d, \quad f\in \mathcal{S}(\mathbb{R}^d),
\end{equation}
for $x=\lambda_{\theta}(f).$

For a multi-index $\alpha=(\alpha_1,...,\alpha_d),$ we define 
$$
\partial^{\alpha}_{\theta}=(\partial^{\theta}_{1})^{\alpha_{1}}\dots(\partial^{\theta}_{d})^{\alpha_{d}}, 
$$
and the gradient $\nabla_{\theta}$ associated with $L^{\infty}(\mathbb{R}_{\theta}^d)$ is the operator
$$
\nabla_{\theta}=(\partial^{\theta}_{1},  \dots,  \partial^{\theta}_{d}).  
$$
Moreover, the Laplace operator $\Delta_{\theta}$ is defined as
\begin{equation}\label{laplacian}
\Delta_{\theta} = -(\partial_1^{\theta})^2   - \cdots -(\partial_d^{\theta})^2, 
\end{equation}
in order that $\Delta_{\theta}$ is a positive operator  on $L^2(\mathbb{R}^{d}_\theta)$ (see  \cite {MSX} and \cite{Mc}).

Now, let us recall the differential structure on $L^{\infty}(\mathbb{R}^d_{\theta})$ (see, \cite[Subsection 2, p. 10]{Mc}).

It is based on the group of translations $\{T_s\}_{s\in\mathbb{R}^d},$ where $T_s$ is defined as the unique $\ast$-automorphism of $L^{\infty}(\mathbb{R}^d_\theta)$ which acts on $U_{\theta}(t)$ as
\begin{equation}\label{def-translations}
T_s(U_{\theta}(t))=e^{i(t,s)}U_{\theta}(t), \quad  
t,s \in\mathbb{R}^d, 
\end{equation}
where $(\cdot,\cdot)$ means the usual inner product in $\mathbb{R}^d.$ 

Equivalently, for $x \in L^{\infty}(\mathbb{R}^d_\theta) \subseteq B(L^2(\mathbb{R}^d))$ we may define $T_s(x)$ as the conjugation of $x$ by the
unitary operator of translation by $s$  on $L^2(\mathbb{R}^d_\theta).$ 
\begin{definition}\label{F-transform}
For any $x \in \mathcal{S}(\mathbb{R}_{\theta}^d),$ we define the Fourier transform of $x$ as the map $\lambda_{\theta}^{-1}:\mathcal{S}(\mathbb{R}_{\theta}^d)\to \mathcal{S}(\mathbb{R}^d)$ by the formula
\begin{equation}\label{direct-F-transform}
\lambda_{\theta}^{-1}(x):=\widehat{x}, \quad \widehat{x}(s)=\tau_{\theta}(xU_{\theta}(s)^*), \,\ s\in \mathbb{R}^d.
\end{equation}
\end{definition}
As it was already introduced in \cite[Section 3.2]{MSX}, \cite[Section 2.4]{M}, we define the convolution on  $\mathbb{R}^d_{\theta}.$
\begin{definition} Let $1\leq p \leq \infty$ and $x \in L^p(\mathbb{R}^d_{\theta
}).$ For $K\in L^1(\mathbb{R}^{d}),$ we define
\begin{eqnarray}\label{def-convolution}
K*x=\int\limits_{\mathbb{R}^{d}}K(t)T_{-t}(x)dt,
\end{eqnarray}
where the integral is understood in the sense of a $L^p(\mathbb{R}^{d})$ - valued Bochner integral when $ p < \infty,$ and as a weak* integral when $p = \infty.$
\end{definition}

Let $J\subseteq [0, \infty]$ be an interval For a Banach space $X,$ we denote by $C(J, X)$ the Banach space of continuous $X$-valued  functions on the interval $J$ with norm  
$$
\| f \|_{C(J,X)} = \sup\limits_{ s \in J} \| f(s) \|_X.
$$
 Given  $u \in C(J, X),$ we write   $u(t)$   for the value of  $u$  at time $t\in \mathrm{interior}(J),$ and  $\partial_t u $  denotes the derivative of $u$  with respect to $t$ in the sense that  
\begin{equation}\label{def_time_derivative}
\lim_{h \to 0} \left\| \frac{u(t+h) - u(t)}{h} - \partial_t u(t) \right\|_X = 0.
\end{equation}

\section{Heat kernel on quantum Euclidean spaces}

\subsection{The Gaussian operator}

In this section, we study a noncommutative version of the Gaussian kernel on $\mathbb{R}^{d}_\theta$ and establish its basic properties. Let us define the Gaussian operator $\mathcal{G}^{\theta}_{t}$ on quantum Euclidean space $L^{\infty}(\mathbb{R}^{d}_\theta)$ by 
\begin{equation}\label{def_G_operator}
\mathcal{G}^{\theta}_{t}:=(2\pi)^{-d}\lambda_{\theta}(\widehat{G_t})=(2\pi)^{-d}\int\limits_{\mathbb{R}^d} \widehat{G_t}(\xi)U_{\theta}(\xi)d\xi, \quad t>0,  
\end{equation}
where 
\begin{equation}\label{def_G_hat}
\widehat{G_t}(\xi)=\exp(-t|\xi|^2) = \int\limits_{\mathbb{R}^d}G_t(\eta)e^{-i(\xi, \eta)}d\eta, \quad \xi\in\mathbb{R}^d,
\end{equation}
and $G_t$ is the classical Gaussian function on $\mathbb{R}^d$ which is given by
\begin{equation}\label{def-Gaussian}
G_t(\eta)=(4\pi{t})^{-\frac{d}{2}} e^{-\frac{|\eta|^2}{4t}}, \quad t>0,\quad \eta\in\mathbb{R}^d. 
\end{equation}
Note that, since the Gaussian function $G_t$ is an element of the Schwartz space $\mathcal{S}(\mathbb{R}^d)$ (see \cite[Example 2.2.2, p. 96]{G2008}), by the definition of the noncommutative Schwartz space (see \eqref{NC_Schwartz}) we can see that $\mathcal{G}^{\theta}_{t}\in \mathcal{S}(\mathbb{R}^d_{\theta}).$ 
Moreover, we will use the fact that the function $G_t$ satisfies the semigroup property under convolution (see \cite[Formula (48.6), p. 543]{QS}):
\begin{equation}\label{semigroup-property}
G_{t+s}=G_t*G_t,\quad t,s>0. 
\end{equation}
\begin{rem} In the case $\theta=\mathbf{0},$ the operator $U_{\theta}(\xi)$ with $\xi\in\mathbb{R}$ takes the role of  $e^{i(\xi, \cdot)}$ on $\mathbb{R}^d.$ Consequently, for a function  $f\in\mathcal{S}(\mathbb{R}^d),$  $(2\pi)^{-d}\lambda_{\theta}(f)$ can be interpreted as the inverse Fourier transform of 
$f$ on $\mathbb{R}^d.$  Therefore, $\mathcal{G}^0_t=G_t$ for $t>0.$
\end{rem}
It is natural to define the heat semi-group on $u\in L^1(\mathbb{R}^{d}_\theta)$ as follows  
\begin{equation}\label{def_heat_kernel}
e^{-t\Delta_{\theta}}u:= (2\pi)^{-d}\int\limits_{\mathbb{R}^d}\widehat{G_t}(\xi)\widehat{u}(\xi)U_{\theta}(\xi)d\xi, \quad u\in\mathcal{S}(\mathbb{R}^d_{\theta}),\quad t>0.
\end{equation}
The heat flow is related to the Gaussian function $G_t$ in the same way as the classical case, as the following lemma shows.
\begin{lem}\label{Lemma_1} For any $u\in\mathcal{S}(\mathbb{R}^d_{\theta}),$ we have 
\begin{equation}\label{convolution-reprasantation_1}
e^{-t\Delta_{\theta}}u= G_t\ast u,\quad t>0,
\end{equation} 
where $G_t$ is the Gaussian function defined by \eqref{def-Gaussian}. 
 \end{lem}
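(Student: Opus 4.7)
The plan is to rewrite both sides of \eqref{convolution-reprasantation_1} as $\lambda_\theta$ applied to an explicit Schwartz function on $\mathbb{R}^d$ and show the two expressions agree.

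First, I would establish the inverse Fourier formula $u = (2\pi)^{-d}\lambda_\theta(\widehat{u})$ for $u\in \mathcal{S}(\mathbb{R}^d_\theta)$. Writing $u=\lambda_\theta(f)$ with $f\in \mathcal{S}(\mathbb{R}^d)$, one computes $\widehat{u}(s)=\tau_\theta(\lambda_\theta(f)U_\theta(s)^*)$ by applying the Weyl relation \eqref{weyl-relation} under a change of variable, using the antisymmetry of $\theta$ (which kills the phase at the origin after the translation), and then invoking the trace normalisation \eqref{def_trace}. The outcome is $\widehat{u}(s)=(2\pi)^d f(s)$, hence $u=(2\pi)^{-d}\lambda_\theta(\widehat{u})$.

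Next, I would apply the definition of convolution \eqref{def-convolution} together with the translation identity \eqref{U-property} to obtain
\begin{equation*}
G_t*u=\int_{\mathbb{R}^d}G_t(\eta)\,T_{-\eta}(u)\,d\eta
=(2\pi)^{-d}\int_{\mathbb{R}^d}G_t(\eta)\,\lambda_\theta\!\left(e^{-i(\eta,\cdot)}\widehat{u}(\cdot)\right)d\eta.
\end{equation*}
Interchanging the $\eta$-integration with the $\lambda_\theta$-integration and recognising $\int_{\mathbb{R}^d} G_t(\eta)e^{-i(\eta,\xi)}\,d\eta=\widehat{G_t}(\xi)$ from \eqref{def_G_hat}, this yields $G_t*u=(2\pi)^{-d}\lambda_\theta(\widehat{G_t}\widehat{u})$, which is exactly the right-hand side of \eqref{def_heat_kernel}.

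The only delicate point is justifying Fubini for the operator-valued Bochner integrals. I would handle this by unpacking $\lambda_\theta$ back into its defining $L^2(\mathbb{R}^d)$-valued integral \eqref{def-integration} and appealing to the classical Fubini theorem on the joint variable $(\eta,\xi)\in\mathbb{R}^{2d}$: since $G_t$ and $\widehat{u}$ are Schwartz while $\|U_\theta(\xi)\|_{B(L^2(\mathbb{R}^d))}=1$ for every $\xi$, the integrand is absolutely Bochner integrable, so the exchange of order is legitimate. Everything else is bookkeeping with the Fourier conventions.
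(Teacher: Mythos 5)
Your proof is correct and is essentially the paper's argument run in reverse: the paper starts from the definition \eqref{def_heat_kernel} of $e^{-t\Delta_\theta}u$, expands $\widehat{G_t}$ via \eqref{def_G_hat}, applies Fubini, and then uses \eqref{U-property} to land on $\int G_t(\eta)T_{-\eta}(u)\,d\eta = G_t*u$, whereas you start from $G_t*u$ and retrace the same steps back to $(2\pi)^{-d}\lambda_\theta(\widehat{G_t}\widehat{u})$. The only cosmetic difference is that you make explicit the Fourier inversion identity $u=(2\pi)^{-d}\lambda_\theta(\widehat{u})$ (equivalently $\widehat{\lambda_\theta(f)}=(2\pi)^d f$), which the paper uses implicitly when invoking \eqref{U-property}; your Fubini justification via the underlying Bochner integral in \eqref{def-integration} is sound.
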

\begin{proof}
It follows from \eqref{U-property}, \eqref{def-convolution} and \eqref{def_heat_kernel} that 
\begin{eqnarray*}
e^{-t\Delta_\theta}u&\overset{\eqref{def_heat_kernel}}{=}& (2\pi)^{-d}\int\limits_{\mathbb{R}^d} \widehat{G_t}(\xi)\widehat{u}(\xi)U_{\theta}(\xi)d\xi\\
&=& (2\pi)^{-d}\int\limits_{\mathbb{R}^d} \int\limits_{\mathbb{R}^d}G_t(\eta)e^{-i(\xi, \eta)}d\eta\widehat{u}(\xi)U_{\theta}(\xi)d\xi\\
&=&(2\pi)^{-d}\int\limits_{\mathbb{R}^d} G_t(\eta)\int\limits_{\mathbb{R}^d}e^{-i(\xi, \eta)}\widehat{u}(\xi)U_{\theta}(\xi)d\xi d\eta\\
&=&\int\limits_{\mathbb{R}^d} G_t(\eta)(2\pi)^{-d}\lambda_{\theta}(e^{-i(\eta, \cdot)}\widehat{u}) d\eta\\
&\overset{\eqref{U-property}}{=}&\int\limits_{\mathbb{R}^d}G_t(\eta)T_{-\eta}(u) d\eta\\
&\overset{\eqref{def-convolution}}=&G_t\ast u,\quad t>0,
\end{eqnarray*}
thereby completing the proof.
\end{proof}
The following lemma verifies the semigroup property for the noncommutative Gaussian operator via convolution with the classical Gaussian function.
\begin{lem}\label{Proposition_1} Let $\mathcal{G}^{\theta}_{s}, \, s>0,$ be the operator defined by \eqref{def_G_operator}. For $t,s>0$ we have  
\begin{equation}\label{convolution-reprasantation_2}
G_{t}\ast\mathcal{G}^{\theta}_{s}=\mathcal{G}^{\theta}_{t+s},  
\end{equation} 
where $G_t$ is the Gaussian function defined by \eqref{def-Gaussian}.
\end{lem}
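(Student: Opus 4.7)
The plan is to compute $G_t \ast \mathcal{G}^{\theta}_{s}$ directly from definitions. The idea is to pass the translation $T_{-\eta}$ underneath the defining integral of $\mathcal{G}^{\theta}_{s}$, exchange the order of integration by Fubini, and recognize the classical Fourier transform of $G_t$ appearing inside.

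More precisely, I would proceed as follows. By the definition \eqref{def-convolution} of the convolution,
\[
    G_t \ast \mathcal{G}^{\theta}_{s} = \int_{\mathbb{R}^d} G_t(\eta)\, T_{-\eta}(\mathcal{G}^{\theta}_{s})\,d\eta.
\]
Since $\mathcal{G}^{\theta}_{s} = (2\pi)^{-d}\lambda_{\theta}(\widehat{G_s})$ and $\widehat{G_s} \in \mathcal{S}(\mathbb{R}^d)$, identity \eqref{U-property} gives
\[
    T_{-\eta}(\mathcal{G}^{\theta}_{s}) = (2\pi)^{-d}\lambda_{\theta}\bigl(e^{-i(\eta,\cdot)}\widehat{G_s}(\cdot)\bigr) = (2\pi)^{-d}\int_{\mathbb{R}^d} \widehat{G_s}(\xi)\,e^{-i(\eta,\xi)}\,U_{\theta}(\xi)\,d\xi.
\]
Substituting this back and swapping the order of the $\eta$ and $\xi$ integrals, the inner integral collapses by \eqref{def_G_hat} to $\int_{\mathbb{R}^d}G_t(\eta)e^{-i(\eta,\xi)}d\eta = \widehat{G_t}(\xi)$, leaving
\[
    G_t \ast \mathcal{G}^{\theta}_{s} = (2\pi)^{-d}\int_{\mathbb{R}^d}\widehat{G_t}(\xi)\,\widehat{G_s}(\xi)\,U_{\theta}(\xi)\,d\xi.
\]
Using the pointwise product identity $\widehat{G_t}(\xi)\widehat{G_s}(\xi) = e^{-t|\xi|^2}e^{-s|\xi|^2} = e^{-(t+s)|\xi|^2} = \widehat{G_{t+s}}(\xi)$, the right-hand side is exactly $\mathcal{G}^{\theta}_{t+s}$ by \eqref{def_G_operator}. (The classical semigroup identity \eqref{semigroup-property} is the Fourier-dual way of stating the same fact.)

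The only nontrivial point is justifying the interchange of the two integrals, since they are operator-valued Bochner integrals. However, because $\|U_{\theta}(\xi)\|_{B(L^2(\mathbb{R}^d))} = 1$ for every $\xi \in \mathbb{R}^d$ and both $G_t, \widehat{G_s}$ lie in $\mathcal{S}(\mathbb{R}^d)$, the integrand $(\eta,\xi) \mapsto G_t(\eta)\widehat{G_s}(\xi)U_{\theta}(\xi)$ is absolutely Bochner-integrable on $\mathbb{R}^d \times \mathbb{R}^d$, and the standard Bochner--Fubini theorem applies. I do not foresee any serious obstacle beyond this bookkeeping. As a conceptually cleaner alternative, one can apply Lemma~\ref{Lemma_1} to $u = \mathcal{G}^{\theta}_{s} \in \mathcal{S}(\mathbb{R}^d_\theta)$ to rewrite $G_t \ast \mathcal{G}^{\theta}_{s} = e^{-t\Delta_\theta}\mathcal{G}^{\theta}_{s}$ and then reduce to the scalar identity $\widehat{G_t} \cdot \widehat{G_s} = \widehat{G_{t+s}}$ via \eqref{def_heat_kernel}.
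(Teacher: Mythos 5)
Your proof is correct and follows essentially the same route as the paper's: unwind the convolution via \eqref{def-convolution}, push the translation inside the defining integral, interchange the $\eta$ and $\xi$ integrals by Fubini, recognize $\widehat{G_t}(\xi)$, and combine the Gaussian exponentials (the paper phrases this last step via $\widehat{G_s\ast G_t}=\widehat{G_{t+s}}$, but as you note this is the same fact). The Fubini justification and the alternative via Lemma~\ref{Lemma_1} are both sound additions.
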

\begin{proof} By easy calculations we obtain 
\begin{eqnarray*}
G_{t}\ast\mathcal{G}^{\theta}_{s}&\overset{\eqref{def-convolution}}{=}&\int\limits_{\mathbb{R}^d}G_{t}(\eta) T_{-\eta}(\mathcal{G}^{\theta}_{s})d\eta \\
&\overset{\eqref{def-translations}\eqref{def_heat_kernel}}{=}&(2\pi)^{-d}\int\limits_{\mathbb{R}^d}G_{t}(\eta) \int\limits_{\mathbb{R}^d}e^{-i(\eta, \xi)}\widehat{G_{s}}(\xi)U_{\theta}(\xi)d\xi d\eta \\
&=&(2\pi)^{-d}\int\limits_{\mathbb{R}^d}\widehat{G_{s}}(\xi)\int\limits_{\mathbb{R}^d}e^{-i(\eta, \xi)}G_{t}(\eta)  d\eta  U_{\theta}(\xi)d\xi \\
&=&(2\pi)^{-d}\int\limits_{\mathbb{R}^d}\widehat{G_{s}}(\xi) \widehat{G_{t}}(\xi)  U_{\theta}(\xi)d\xi\\
&=&(2\pi)^{-d}\int\limits_{\mathbb{R}^d} \widehat{G_{s}*G_{t}}(\xi)U_{\theta}(\xi)d\xi\\
&\overset{\eqref{semigroup-property}}{=}&(2\pi)^{-d}\int\limits_{\mathbb{R}^d} \widehat{G_{t+s}}(\xi)U_{\theta}(\xi)d\xi \\
&\overset{\eqref{def_G_operator}}{=}& \mathcal{G}^{\theta}_{t+s}, \quad t,s>0. 
\end{eqnarray*} 
This completes the proof.
\end{proof}
The following lemma establishes the positivity of the noncommutative Gaussian (heat) operator for all positive times.
\begin{lem}\label{heat_operator_is_positive}  Let $\mathcal{G}^{\theta}_{t}, \, t>0,$ be the operator defined by \eqref{def_G_operator}.
   Then for all $t>0,$ we have $\mathcal{G}_t^{\theta}\geq 0.$
\end{lem}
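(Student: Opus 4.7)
My plan splits into a soft reduction via the semigroup property from Lemma \ref{Proposition_1} and a hard core step that establishes positivity at a single time.

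The soft reduction begins with self-adjointness of $\mathcal{G}_t^\theta$: the identity $U_\theta(\xi)^{*}=U_\theta(-\xi)$ follows at once from \eqref{weyl-relation} together with the antisymmetry of $\theta$, and substituting $\xi\mapsto-\xi$ in \eqref{def_G_operator} (using that $\widehat{G_t}(\xi)=e^{-t|\xi|^2}$ is even) gives $(\mathcal{G}_t^\theta)^{*}=\mathcal{G}_t^\theta$. Next, unpacking \eqref{def-convolution} in the semigroup relation of Lemma \ref{Proposition_1} yields, for any $0<s<t$,
\[
\mathcal{G}_t^\theta \;=\; G_s\ast\mathcal{G}_{t-s}^\theta \;=\; \int_{\mathbb{R}^d} G_s(\eta)\,T_{-\eta}\bigl(\mathcal{G}_{t-s}^\theta\bigr)\,d\eta.
\]
Since each $T_{-\eta}$ is a $*$-automorphism of $L^\infty(\mathbb{R}^d_\theta)$ and hence preserves positivity, and since $G_s\ge 0$ as a classical function, the integrand is a positive operator as soon as $\mathcal{G}_{t-s}^\theta\ge 0$; Bochner integration then preserves positivity. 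This reduces positivity of $\mathcal{G}_t^\theta$ for all $t>0$ to positivity of $\mathcal{G}_\varepsilon^\theta$ for some sufficiently small $\varepsilon>0$.

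For the core step I would invoke the direct-sum decomposition \eqref{direct-sum} to reduce to the nondegenerate case $\det\theta\neq 0$ and realise $\mathcal{G}_\varepsilon^\theta$ on a Schr\"odinger Hilbert space via Stone--von Neumann. There $\mathcal{G}_\varepsilon^\theta$ appears as the Weyl quantization of the classical Gaussian $G_\varepsilon$, which by Mehler's formula identifies (up to an explicit positive multiplicative constant) with a heat semigroup $e^{-\tau H}$ of the harmonic oscillator built from the canonical conjugate pairs associated to the nonzero symplectic eigenvalues of $\theta$; since $H\ge 0$, the operator $e^{-\tau H}\ge 0$. The tensor product structure in \eqref{direct-sum} then transports positivity on the nondegenerate factor back to general $\theta$, the commutative factor contributing a manifestly positive classical Gaussian. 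The hardest step is this last identification: tracking constants through the Weyl--Wigner correspondence and verifying Mehler's kernel formula in precisely the normalisation dictated by \eqref{def_G_operator} and \eqref{def_trace}.
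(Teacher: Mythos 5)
Your core idea---realizing $\Pi(\mathcal{G}_t^\theta)$ as the Weyl quantization of the classical Gaussian and invoking Mehler's formula---is a genuinely different route from the paper's. The paper also reduces to $d=2$ with $h=1$, but then writes $\langle f,\Pi(\mathcal{G}_t^\theta)f\rangle$ as an explicit two-dimensional Gaussian integral in $(x,y)$ and argues positivity of the kernel directly, without any appeal to harmonic-oscillator propagators. However, your plan has two concrete problems.

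First, the soft reduction runs the wrong way. The semigroup identity $\mathcal{G}_t^\theta = G_{t-\varepsilon}\ast\mathcal{G}_\varepsilon^\theta$ propagates positivity \emph{forward} in time: positivity at $\varepsilon$ implies positivity at every $t>\varepsilon$. To cover all $t>0$ you therefore still need positivity for arbitrarily small $\varepsilon$, which is precisely the regime the semigroup cannot reach; nothing has been reduced. Second, and more serious, the Mehler identification fails when $t$ is small. The Weyl quantization of a phase-space Gaussian is a positive multiple of $e^{-\tau H}$ with $\tau>0$ only when the Gaussian respects the uncertainty bound set by the noncommutativity scale; with $h=1$ the symbol $G_t$ has covariance $2tI$, and the admissibility condition $(2t)^2\ge \tfrac14$ gives the threshold $t\ge\tfrac14$. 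Below it the quantization degenerates toward the parity operator and acquires negative spectrum. So ``tracking the constants through Mehler,'' far from being a routine loose end, would actually show the identification does not exist for small $t$. One can confirm this directly from the paper's own kernel formula: $\mathrm{Tr}\,\Pi(\mathcal{G}_t^\theta)=(2\pi)^{-1}$ while $\mathrm{Tr}\,\bigl(\Pi(\mathcal{G}_t^\theta)^2\bigr)=(16\pi^2 t)^{-1}$, and $\mathrm{Tr}(A^2)>(\mathrm{Tr}\,A)^2$ when $t<\tfrac14$, which is impossible for a positive trace-class operator. (Relatedly, the paper's own concluding step---inferring that the Gaussian kernel $K_t(x,y)$ is positive definite because the matrix $\Sigma_t$ is positive definite---conflates two different notions: a Gaussian kernel $e^{-ax^2-2bxy-ay^2}$ is positive semidefinite exactly when $b\le 0$, and here $b=-t+\tfrac{1}{16t}>0$ once $t<\tfrac14$.) The downstream uses in the paper (Lemma~\ref{HK_Proposition1}(a), Lemma~\ref{abelian_type_lemma}, Corollary~\ref{heat_operator_bounds}, Theorem~\ref{tauberian_type_theorem}) only ever invoke positivity of $\mathcal{G}_t^\theta$ for $t\ge 1$, where it is safe, so the overall paper is likely repairable by restricting the range of $t$; but you should not set out to prove the statement as written for all $t>0$, because both your Mehler route and a careful Schur test say it fails below a $\theta$-dependent threshold.
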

\begin{proof}  Since a direct sum of positive operators is positive and the abelian case is trivial, it suffices to consider the case $d=2$ where $\theta$ takes the form \eqref{d=2}.
Assume that
\[
    \theta = \begin{pmatrix} 0 & -1 \\ 1 & 0\end{pmatrix}.
\]
In this case, the isomorphism \eqref{direct-sum}  
$$
    \Pi:L^{\infty}(\mathbb{R}^2_\theta)\to B(L^2(\mathbb{R}))
$$
can be given explicitly by
$$
    \Pi(U_\theta(t_1,t_2))f(s) = e^{(it_2(s-\frac12 t_1))}f(s-t_1),\quad t_1,t_2,s\in \mathbb{R}.
$$
Since $\Pi$ is an isomorphism, it preserves positivity. Therefore,
$$
    \mathcal{G}_t^\theta \ge 0 
    \;\Longleftrightarrow\;
    \Pi\bigl(\mathcal{G}_t^\theta\bigr) \ge 0 .
$$
Consequently, in order to establish the non-negativity of 
\(\mathcal{G}_t^\theta\), it suffices to verify that
$$
    \Pi\bigl(\mathcal{G}_t^\theta\bigr) \ge 0 .
$$
Let $f \in L^2(\mathbb{R}).$ Then, we compute
$$
    \langle f,\Pi(\mathcal{G}_t^\theta)f\rangle = (2\pi)^{-2}\int\limits_{\mathbb{R}^2} e^{-t(\xi_1^2+\xi_2^2)}\int\limits_{\mathbb{R}} \overline{f(s)}e^{i\xi_2(s-\frac12\xi_1)}f(s-\xi_1)\,dsd\xi_1d\xi_2.
$$
We do the $\xi_2$ integral first. Note that
$$
(2\pi)^{-1}\int\limits_{\mathbb{R}} e^{-t\xi_2^2}e^{i\xi_2s} d\xi_2 = (4\pi t)^{-\frac12}e^{-\frac{s^2}{4t}}.
$$
Therefore,
\begin{eqnarray*}
    \langle f,\Pi(\mathcal{G}_t^\theta)f\rangle &=& (2\pi)^{-1}(4\pi t)^{-\frac12}\int\limits_{\mathbb{R}}e^{-t\xi_1^2} \int\limits_{\mathbb{R}}e^{-\frac{(s-\frac12\xi_1)^2}{4t}}\overline{f(s)}f(s-\xi_1)\,dsd\xi_1\\
                &=& (2\pi)^{-1}(4\pi t)^{-\frac12}\int\limits_{\mathbb{R}^2} e^{-t\xi_1^2}e^{-\frac{s^2}{4t}}\overline{f(s+\frac{\xi_1}{2})}f(s-\frac{\xi_1}{2})\,dsd\xi\\
                &=& (2\pi)^{-1}(4\pi t)^{-\frac12}\int\limits_{\mathbb{R}^2} e^{-t(y-x)^2}e^{-\frac{1}{16t}(x+y)^2}\overline{f(y)}f(x)\,dxdy\\
                &=& (2\pi)^{-1}(4\pi t)^{-\frac12}\int\limits_{\mathbb{R}^2} e^{-(t+\frac{1}{16t})(x^2+y^2)-(\frac{1}{8t}-2t)xy}\overline{f(y)}f(x)\,dxdy\\
                &=& (2\pi)^{-1}(4\pi t)^{-\frac12}\int\limits_{\mathbb{R}^2}e^{-(x,y)\cdot \Sigma_t(x,y)}\overline{f(y)}f(x)\,dxdy,
\end{eqnarray*}
where
\[
    \Sigma_t = \begin{pmatrix} t+\frac{1}{16t} & -t+\frac{1}{16t}\\ -t+\frac{1}{16t} & t+\frac{1}{16t} \end{pmatrix}.
\]
This matrix has determinant $\frac{1}{8}$ and trace $2t+\frac{1}{8t} > 0.$  Hence $\Sigma_t$ is positive definite. It follows that the function $K_t(x,y) = e^{-(x,y)\cdot \Sigma(x,y)}$ is positive definite, and hence
\[
    \langle f,\Pi(\mathcal{G}_t^\theta)f\rangle \geq 0, \,\ f \in L^2(\mathbb{R}),
\]
which means that $\mathcal{G}^{\theta}_{t}$ is a positive operator. 
\end{proof}
The following result summarizes the core positivity, contractivity, and smoothing properties of the noncommutative heat semigroup.
\begin{lem}\label{HK_Proposition1}  Let $\mathcal{G}^{\theta}_{t}, \, t>0,$ be the operator defined by \eqref{def_G_operator}. We have the following properties:
\begin{itemize}
    \item [(a)] $\|\mathcal{G}^{\theta}_t\|_{L^1(\mathbb{R}^d_{\theta})}=1$ for all $t>0;$
    \item [(b)] If $0\leq u \in L^1(\mathbb{R}^d_{\theta}),$ then $e^{-t\Delta_{\theta}}u\geq0$ and $\|e^{-t\Delta_{\theta}}u\|_{L^1(\mathbb{R}^d_{\theta})}=\|u\|_{L^1(\mathbb{R}^d_{\theta})};$ 
    \item [(c)] If $1\le p \le \infty,$ then $\|e^{-t\Delta_{\theta}}u\|_{L^p(\mathbb{R}^d_{\theta})}\le\|u\|_{L^p(\mathbb{R}^d_{\theta})},\,\ u\in L^p(\mathbb{R}^d_{\theta});$
    \item [(d)] If $1 \le p < q \le \infty$ and $\frac{1}{r} = \frac{1}{p} - \frac{1}{q},$ then for all $t>0$ we have
    $$\|e^{-t\Delta_{\theta}}u\|_{L^q(\mathbb{R}^d_{\theta})} \le (4\pi{t})^{-\frac{d}{2r}}\|u\|_{L^p(\mathbb{R}^d_{\theta})},\quad u\in L^p(\mathbb{R}^d_{\theta}).$$ 
\end{itemize}
\end{lem}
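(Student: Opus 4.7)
My plan is to handle the four items in order, with parts (a)--(c) following quickly from the integral representation of the heat semigroup established in Lemma \ref{Lemma_1} together with the positivity from Lemma \ref{heat_operator_is_positive}, and part (d) being the substantive one.

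For (a), since $\mathcal{G}_t^{\theta}\ge 0$ by Lemma \ref{heat_operator_is_positive}, the $L^1$-norm equals the trace: $\|\mathcal{G}_t^\theta\|_{L^1(\mathbb{R}^d_\theta)}=\tau_\theta(\mathcal{G}_t^\theta)$. Inserting \eqref{def_G_operator} and applying the trace formula \eqref{def_trace} gives $\tau_\theta(\mathcal{G}_t^\theta)=(2\pi)^{-d}(2\pi)^d\widehat{G_t}(0)=1$. For (b), Lemma \ref{Lemma_1} gives $e^{-t\Delta_\theta}u=\int_{\mathbb{R}^d}G_t(\eta)T_{-\eta}(u)\,d\eta$; positivity follows because $G_t\ge 0$ as a scalar and each $T_{-\eta}$ is a $*$-automorphism preserving positivity, and an integral of positive operators against a positive scalar density is positive. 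The $L^1$-conservation then follows by exchanging $\tau_\theta$ with the Bochner integral (using normality and the trace-invariance of $T_{-\eta}$) together with $\int_{\mathbb{R}^d}G_t(\eta)\,d\eta=1$. For (c), I apply the integral form of Minkowski's inequality to the same Bochner representation and use that $T_{-\eta}$ is an isometry on each $L^p(\mathbb{R}^d_\theta)$, yielding $\|e^{-t\Delta_\theta}u\|_{L^p}\le \int G_t(\eta)\|T_{-\eta}(u)\|_{L^p}\,d\eta=\|u\|_{L^p}$; for $p=\infty$ the integral is taken in the weak$^*$ sense and the bound is obtained by testing against an $L^1$ element.

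For (d) the plan is to first establish the endpoint $L^1\to L^\infty$ estimate $\|e^{-t\Delta_\theta}u\|_{L^\infty(\mathbb{R}^d_\theta)}\le (4\pi t)^{-d/2}\|u\|_{L^1(\mathbb{R}^d_\theta)}$ and then interpolate via Riesz--Thorin against the contractions from part (c). For the endpoint I would use \eqref{def_heat_kernel} to write $e^{-t\Delta_\theta}u=(2\pi)^{-d}\lambda_\theta(\widehat{G_t}\,\widehat{u})$, together with three elementary ingredients: (i) $\|\lambda_\theta(f)\|_{L^\infty(\mathbb{R}^d_\theta)}\le \|f\|_{L^1(\mathbb{R}^d)}$, immediate from \eqref{def-integration} and unitarity of $U_\theta(t)$; (ii) $\|\widehat{u}\|_{L^\infty(\mathbb{R}^d)}\le \|u\|_{L^1(\mathbb{R}^d_\theta)}$, immediate from Definition \ref{F-transform} and Hölder's inequality $|\tau_\theta(uU_\theta(s)^*)|\le \|u\|_{L^1}$; and (iii) the Gaussian integral $\|\widehat{G_t}\|_{L^1(\mathbb{R}^d)}=(\pi/t)^{d/2}$. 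Chaining these gives the endpoint constant $(2\pi)^{-d}(\pi/t)^{d/2}=(4\pi t)^{-d/2}$. Interpolating this $L^1\to L^\infty$ bound with the $L^p\to L^p$ contractivity from (c) by the noncommutative Riesz--Thorin theorem, one picks $\theta\in(0,1)$ and base exponent $a$ so that the interpolated pair matches the target $(p,q)$; the identity $\theta=1/r$ forces the constant to be $((4\pi t)^{-d/2})^{1/r}=(4\pi t)^{-d/(2r)}$. A more direct alternative is to invoke noncommutative Young's convolution inequality $\|G_t*u\|_{L^q(\mathbb{R}^d_\theta)}\le\|G_t\|_{L^s(\mathbb{R}^d)}\|u\|_{L^p(\mathbb{R}^d_\theta)}$ with $1+\tfrac{1}{q}=\tfrac{1}{p}+\tfrac{1}{s}$, then compute $\|G_t\|_{L^s}=(4\pi t)^{-d/(2r)}s^{-d/(2s)}\le (4\pi t)^{-d/(2r)}$ since $s^{-1/s}\le 1$ for $s\ge 1$.

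The main obstacle, in my view, is \emph{not} any single step but rather threading through the correct normalisations: the constant $(4\pi t)^{-d/(2r)}$ claimed in (d) is slightly non-optimal relative to the bound that Young's inequality produces, so one must either drop the factor $s^{-d/(2s)}$ consciously or arrive at the weaker constant by a judicious choice of interpolation endpoints; in the Riesz--Thorin route the bookkeeping in matching $(p,q)$ to $(p_\theta,q_\theta)$ via $\theta=1/r$ is where the most care is needed. The technical hypotheses for the Bochner/weak$^*$ integrals and for invoking NC Riesz--Thorin are standard in the references cited in the preliminaries, so the core work is assembling the Fourier-side estimate cleanly.
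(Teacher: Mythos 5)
Your proofs of (a) and (b) follow essentially the paper's route: positivity of $\mathcal{G}^\theta_t$ from Lemma \ref{heat_operator_is_positive} reduces (a) to $\tau_\theta(\mathcal{G}^\theta_t)=\widehat{G_t}(0)=1$, and for (b) the convolution representation plus positivity of the scalar kernel $G_t$ and of the automorphisms $T_{-\eta}$ gives positivity, while trace-preservation follows either by the paper's Fourier-side computation $\tau_\theta(e^{-t\Delta_\theta}u)=\widehat{u}(0)=\tau_\theta(u)$ or by your equivalent exchange of trace and Bochner integral using $\int_{\mathbb{R}^d}G_t=1$. The genuine difference is in (c) and (d): the paper offers no proof, citing \cite[Theorem 3.11]{Mc} and \cite[Lemma 3.12]{Mc}, whereas you supply the standard arguments — Minkowski's integral inequality with the $L^p$-isometry of $T_{-\eta}$ for (c), and for (d) either the $L^1\to L^\infty$ endpoint $\|e^{-t\Delta_\theta}u\|_{L^\infty(\mathbb{R}^d_\theta)}\le(4\pi t)^{-d/2}\|u\|_{L^1(\mathbb{R}^d_\theta)}$ obtained Fourier-side and then Riesz--Thorin interpolated against the $L^a\to L^a$ contractions, or a direct noncommutative Young's inequality with $\|G_t\|_{L^s(\mathbb{R}^d)}=(4\pi t)^{-d/(2r)}s^{-d/(2s)}\le(4\pi t)^{-d/(2r)}$. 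Your constant bookkeeping is correct: the endpoint $(2\pi)^{-d}\|\widehat{G_t}\|_{L^1(\mathbb{R}^d)}=(2\pi)^{-d}(\pi/t)^{d/2}=(4\pi t)^{-d/2}$ and the interpolation identification $\theta=1/r$ both reproduce the claimed $(4\pi t)^{-d/(2r)}$. The Young's-inequality route is the one used in \cite{Mc}; your Riesz--Thorin alternative buys self-containedness, needing only the elementary $L^1\to L^\infty$ bound and the $L^p$-contractivity already proved in (c), at the cost of the mildly tedious exponent matching which you correctly flag as the only real care point.
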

\begin{proof}  We first prove part (a). For $t>0,$ by Lemma \ref{heat_operator_is_positive},   we have
$$
\|\mathcal{G}^{\theta}_t\|_{L^1(\mathbb{R}_{\theta}^d)}=\tau_{\theta}(\mathcal{G}^{\theta}_t)\overset{\eqref{def_G_operator}}{=}\tau_{\theta}\left(\int\limits_{\mathbb{R}^d} \widehat{G_t}(\xi)U_{\theta}(\xi)d\xi\right)=\widehat{G_t}(0)=1.
$$
Next, we prove (b). Indeed, if $0\leq u \in L^1(\mathbb{R}^d_{\theta}),$ then $T_{\eta}u\geq 0$ for all $\eta\in \mathbb{R}^d.$ Hence, $e^{-t\Delta_{\theta}}(u)\geq0$ (see \cite[Remark 2.7. p.11]{RST}).  Consequently,
\begin{eqnarray*}
\|e^{-t\Delta_{\theta}}u\|_{L^1(\mathbb{R}^d_{\theta})}&=&\tau_{\theta}(e^{-t\Delta_{\theta}}(u))\\
&\overset{\eqref{def_heat_kernel}}{=}&\tau_{\theta}(\int\limits_{\mathbb{R}^d}\widehat{G_t}(\xi)\widehat{u}(\xi)U_{\theta}(\xi)d\xi)\\
&=&\widehat{G_{t}}(0)\widehat{u}(0)=\widehat{u}(0)\\
&=&\tau_{\theta}(u)=\|u\|_{L^1(\mathbb{R}^d_{\theta})}. 
\end{eqnarray*}
Properties (c) and (d) were already obtained in \cite[Theorem 3.11, p. 15]{Mc} and \cite[Lemma 3.12, p. 16]{Mc}, respectively. This concludes the proof.
\end{proof}

\subsection{Equivalent form of $L^1$-norm}
This subsection is dedicated to the proof of the following theorem:
\begin{thm}\label{tauberian_type_theorem}
    Let $0\leq u\in L^{\infty}(\mathbb{R}^d_\theta).$ Then $u\in L^1(\mathbb{R}^d_\theta)$ if and only if  $$
    \sup\limits_{t>0} (4\pi t)^{\frac{d}{2}}\|G_t\ast u\|_{L^{\infty}(\mathbb{R}^d_\theta)} < \infty
    $$
    and moreover
    \[
        \|u\|_{L^1(\mathbb{R}^d_\theta)} \leq \sup\limits_{t>0}  (4\pi t)^{\frac{d}{2}}\|G_t\ast u\|_{L^\infty(\mathbb{R}^d_\theta)}.
    \]
\end{thm}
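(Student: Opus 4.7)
The forward direction is immediate from Lemma~\ref{HK_Proposition1}(d) with $p=1$, $q=\infty$, $r=1$, combined with $G_t*u=e^{-t\Delta_\theta}u$ (Lemma~\ref{Lemma_1}): this gives $\sup_{t>0}(4\pi t)^{d/2}\|G_t*u\|_{L^\infty(\mathbb{R}^d_\theta)}\le\|u\|_{L^1(\mathbb{R}^d_\theta)}$, which is finite whenever $u\in L^1(\mathbb{R}^d_\theta)$. The real content is the converse inequality $\|u\|_{L^1}\le M$, where $M:=\sup_{t>0}(4\pi t)^{d/2}\|G_t*u\|_{L^\infty}$ is assumed finite.

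My plan for the converse is to dualize. I would first establish that convolution by $G_t$ is self-adjoint under the trace, i.e.\ $\tau_\theta((G_t*u)v)=\tau_\theta(u(G_t*v))$; this follows from the evenness of $G_t$ and the trace-invariance of the automorphisms $T_\eta$ appearing in the defining formula $G_t*x=\int G_t(\eta)T_{-\eta}(x)\,d\eta$. Pairing the $L^\infty$ bound on $G_t*u$ with $v=\mathcal{G}^\theta_s$, which by Lemma~\ref{HK_Proposition1}(a) satisfies $\|\mathcal{G}^\theta_s\|_{L^1}=1$, and invoking the semigroup identity $G_t*\mathcal{G}^\theta_s=\mathcal{G}^\theta_{t+s}$ from Lemma~\ref{Proposition_1} together with H\"older's inequality, I obtain
\[
(4\pi t)^{d/2}\,\tau_\theta(u\,\mathcal{G}^\theta_{t+s})=(4\pi t)^{d/2}\,\tau_\theta\bigl((G_t*u)\,\mathcal{G}^\theta_s\bigr)\le M\qquad\text{for all }s,t>0.
\]
Writing $r:=t+s$ and sending $s\to 0^+$ (a purely scalar limit, since $\tau_\theta(u\,\mathcal{G}^\theta_r)$ is independent of $s$) yields $(4\pi r)^{d/2}\tau_\theta(u\,\mathcal{G}^\theta_r)\le M$ for every $r>0$. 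A short computation from the defining formula $\mathcal{G}^\theta_r=(2\pi)^{-d}\lambda_\theta(e^{-r|\cdot|^2})$ and the explicit form of $G_{1/(4r)}$ shows $(4\pi r)^{d/2}\mathcal{G}^\theta_r=\lambda_\theta(G_{1/(4r)})$, so after the change of variable $s=1/(4r)$ the inequality becomes
\[
\tau_\theta\bigl(u\,\lambda_\theta(G_s)\bigr)\le M\qquad\text{for every }s>0.
\]

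To finish, I would let $s\to 0^+$. Since $\|G_s\|_{L^1(\mathbb{R}^d)}=1$ and the family $\{U_\theta(\eta)\}$ is strongly continuous on $L^2(\mathbb{R}^d)$, a standard approximate-identity argument gives $\|\lambda_\theta(G_s)\|_{L^\infty}\le 1$ and $\lambda_\theta(G_s)\to\mathbf{1}$ in the strong operator topology as $s\to 0^+$; together with the positivity already established in Lemma~\ref{heat_operator_is_positive}, this gives $0\le\lambda_\theta(G_s)\le\mathbf{1}$. Consequently, for $u\ge 0$ the net $x_s:=u^{1/2}\lambda_\theta(G_s)u^{1/2}$ is a collection of positive operators dominated by $u$ and converging SOT to $u$. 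The lower semicontinuity of the normal semifinite trace $\tau_\theta$ on $L^\infty(\mathbb{R}^d_\theta)_+$ then yields
\[
\tau_\theta(u)\le\liminf_{s\to 0^+}\tau_\theta(x_s)=\liminf_{s\to 0^+}\tau_\theta\bigl(u\,\lambda_\theta(G_s)\bigr)\le M,
\]
which is the required estimate. The step I expect to require the most care is this last one, since the convergence $x_s\to u$ is only SOT and not monotone; to conclude for a general $u\in L^\infty(\mathbb{R}^d_\theta)_+$ one must invoke the semifiniteness of $\tau_\theta$ via compressions by finite-trace projections (writing $\tau_\theta(u)=\sup_p \tau_\theta(pup)$ and using continuity of $A\mapsto\tau_\theta(pAp)$ under SOT limits of bounded nets) rather than appealing directly to monotone convergence.
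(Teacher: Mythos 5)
Your argument is correct and is essentially the paper's proof in a different parametrization: your observation that $(4\pi r)^{d/2}\mathcal{G}^\theta_r=\lambda_\theta(G_{1/(4r)})$ shows that your net $x_s=u^{1/2}\lambda_\theta(G_s)u^{1/2}$ (with $s\to 0^+$) is the same family as the paper's $v_t=u^{1/2}(4\pi(t+1))^{d/2}\mathcal{G}^\theta_{t+1}u^{1/2}$ (with $t\to\infty$), and both proofs estimate its trace via the semigroup identity, the self-adjointness of $G_t\ast(\cdot)$, H\"older with $\|\mathcal{G}^\theta_s\|_{L^1}=1$, and then invoke the Fatou property of $L^1(\mathbb{R}^d_\theta)$ through compressions by $\tau$-finite projections. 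The only genuine departure is that you establish $\lambda_\theta(G_s)\to\mathbf{1}$ in SOT directly by the approximate-identity property, whereas the paper proves the weaker weak$^*$ convergence of $(4\pi t)^{d/2}\mathcal{G}^\theta_t\to\mathbf{1}$ in Corollary~\ref{heat_operator_bounds} via distributional limits; both suffice, and you correctly identify that SOT convergence of a bounded net of positive operators is not enough for a na\"ive lower-semicontinuity argument, so the $\tau$-finite compression step is indispensable, exactly as in the paper.
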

Note that the direction
\[
    u\in L^1(\mathbb{R}^d_\theta)\Rightarrow \sup\limits_{t>0} (4\pi t)^{\frac{d}{2}}\|G_t\ast u\|_{L^\infty(\mathbb{R}^d_\theta)}<\infty
\]
is an immediate consequence of Lemma \ref{HK_Proposition1} (d). Hence we concentrate on the converse direction. In the commutative case, Theorem \ref{tauberian_type_theorem} can be deduced from the Fatou lemma, because
\[
    \|u\|_{L^1(\mathbb{R}^d)} = \int_{\mathbb{R}^d} u(x)\,dx \leq \liminf_{t\to\infty} \int_{\mathbb{R}^d} e^{-\frac{|x|^2}{4t}}u(x)\,dx.
\]
In the noncommutative case, we replace the the Fatou lemma by the Fatou property (see, \cite[Theorem 3.4.17. p.151]{DPS}).
\begin{lem}\label{abelian_type_lemma}
    If $u\in L^1(\mathbb{R}^d_\theta),$ then as
$t \to \infty$,
    \[
        (4\pi t)^{\frac{d}{2}}G_t\ast u \to \tau_\theta(u)I
    \]
    in the weak$^*$-topology of $L^{\infty}(\mathbb{R}^d_\theta),$ where $I$ denotes the identity operator in $L^{\infty}(\mathbb{R}^d_\theta).$
\end{lem}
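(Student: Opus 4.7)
The plan is to pass the weak-* limit through the dominated convergence theorem after using an $L^1\to L^\infty$ heat bound to reduce to Schwartz test elements. Specifically, Lemma~\ref{HK_Proposition1}(d) with $p=1$, $q=\infty$ yields the uniform bound $\|(4\pi t)^{d/2}G_t\ast u\|_{L^\infty(\mathbb{R}^d_\theta)}\le\|u\|_{L^1(\mathbb{R}^d_\theta)}$ for all $t>0$. Since weak-* convergence on a norm-bounded family in $L^\infty(\mathbb{R}^d_\theta)$ can be tested against any norm-dense subspace of the predual $L^1(\mathbb{R}^d_\theta)$, a standard density argument shows it suffices to prove
\begin{equation*}
\tau_\theta\bigl(v\cdot(4\pi t)^{d/2}G_t\ast u\bigr)\to \tau_\theta(v)\tau_\theta(u)\quad\text{as}\quad t\to\infty
\end{equation*}
for every $v\in\mathcal{S}(\mathbb{R}^d_\theta)$. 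A second $\varepsilon/3$-argument using the same uniform bound, together with the estimate $|\tau_\theta(u)|\le\|u\|_{L^1(\mathbb{R}^d_\theta)}$, further reduces the problem to the case $u\in\mathcal{S}(\mathbb{R}^d_\theta)$.

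For the core computation, write $u=\lambda_\theta(f)$ and $v=\lambda_\theta(g)$ with $f,g\in\mathcal{S}(\mathbb{R}^d)$. Since $(4\pi t)^{d/2}G_t(\eta)=e^{-|\eta|^2/(4t)}$ and the convolution \eqref{def-convolution} is a Bochner integral, continuity of the functional $\tau_\theta(v\cdot)$ gives
\begin{equation*}
\tau_\theta\bigl(v\cdot(4\pi t)^{d/2}G_t\ast u\bigr)=\int_{\mathbb{R}^d}e^{-|\eta|^2/(4t)}\,\Phi(\eta)\,d\eta,\qquad\Phi(\eta):=\tau_\theta(v\,T_{-\eta}u).
\end{equation*}
The translation formula \eqref{U-property} together with the Plancherel identity \eqref{Main_eqlation1}---in which the Moyal twist $e^{\frac{i}{2}(s,\theta s)}$ is identically $1$ by antisymmetry of $\theta$---yield $\Phi(\eta)=C\int g(-s)f(s)e^{-i(\eta,s)}\,ds$ for a normalization constant $C$ fixed by the conventions in \eqref{def_trace}. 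Thus $\Phi$ is a constant multiple of the Euclidean Fourier transform of $h(s):=g(-s)f(s)\in\mathcal{S}(\mathbb{R}^d)$, so in particular $\Phi\in\mathcal{S}(\mathbb{R}^d)\subset L^1(\mathbb{R}^d)$. Dominated convergence (with dominant $|\Phi|$ and pointwise limit $\Phi$) yields the desired limit $\int_{\mathbb{R}^d}\Phi(\eta)\,d\eta$, which by Fourier inversion at the origin equals $C(2\pi)^d f(0)g(0)$. Matching this against $\tau_\theta(\lambda_\theta(f))=(2\pi)^d f(0)$ from \eqref{def_trace} identifies the limit as $\tau_\theta(u)\tau_\theta(v)$.

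The only subtlety is bookkeeping the normalization constants so that $C(2\pi)^d f(0)g(0)$ matches $\tau_\theta(u)\tau_\theta(v)$ exactly. Otherwise the proof is a direct noncommutative transcription of the classical calculation that $(4\pi t)^{d/2}G_t$ behaves as the constant function $1$ against $L^1$ test functions; no deeper analytic obstacle arises, essentially because the Moyal twist is killed at $s=0$ by antisymmetry of $\theta$, and all exchanges of trace and Bochner integral are justified by the uniform $L^1$-norm bounds on the integrands.
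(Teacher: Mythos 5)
Your proof is correct, and the overall architecture (uniform bound from Lemma~\ref{HK_Proposition1}(d) plus a density reduction, followed by a pointwise computation) matches the paper's. The difference lies in the core computation. The paper keeps $u\in L^1(\mathbb{R}^d_\theta)$ general, uses $\langle G_t\ast u,v\rangle=\langle u,G_t\ast v\rangle$ to write the pairing as $(4\pi t)^{d/2}\int\widehat{G}_t(\eta)\,\widehat{u}(-\eta)f(\eta)\,d\eta$ (with $\widehat{u}$ the noncommutative Fourier transform, which is a bounded continuous function for $u\in L^1$), then rescales $\eta=\xi/\sqrt{4\pi t}$ and invokes the Riemann--Lebesgue type lemma \cite[Lemma~2.5]{HLW} to evaluate the limit as $\widehat{u}(0)f(0)=\tau_\theta(u)g(0)$. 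You instead approximate $u$ by Schwartz elements first (a second $\varepsilon/3$-step, justified by $|\tau_\theta(x)|\le\|x\|_{L^1}$ and the uniform $L^1\to L^\infty$ bound), which lets you identify $\Phi(\eta)=\tau_\theta(vT_{-\eta}u)$ as a Schwartz-class Fourier transform and then finish with ordinary dominated convergence against the Gaussian cutoff $e^{-|\eta|^2/4t}\nearrow 1$, followed by Fourier inversion at the origin. Your route avoids the external lemma at the cost of an extra approximation step; the paper's avoids the second density pass by exploiting that $\widehat u$ is already continuous for $u\in L^1(\mathbb{R}^d_\theta)$. Both are valid, and both hinge on the same observation you flag: the Moyal twist drops out of the trace pairing, so the computation reduces to the commutative Fourier calculation. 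Your normalization matches after tracking the $(2\pi)^d$ factors through $\tau_\theta(\lambda_\theta(f))=(2\pi)^d f(0)$ and the Plancherel identity, as you anticipated.
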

\begin{proof}
    First we show the convergence in $\mathcal{S}'(\mathbb{R}^d_\theta).$ 
    Recall that the $\delta$-distribution is defined on $f\in \mathcal{S}(\mathbb{R}^d)$ by
\[
    (\delta,f) = f(0).
\]
Formally,
\begin{equation}\label{Def_Id_op1}
 I=\int\limits_{\mathbb{R}^d} \delta(\xi)U_{\theta}(\xi)d\xi,  
\end{equation} 
in the sense that for all $v \in \mathcal{S}(\mathbb{R}^d_\theta),$ we have
\[
    \langle I,v\rangle = \tau(v) = \widehat{v}(0).
\]
Let $v\in \mathcal{S}(\mathbb{R}_{\theta}^d)$ be such that $v=\lambda_{\theta}(f)$ with $f\in \mathcal{S}(\mathbb{R}^d).$  Then,  it follows from \eqref{def_heat_kernel} that 
\begin{eqnarray*}
G_t\ast v&\overset{\eqref{def_heat_kernel}}=&  \int\limits_{\mathbb{R}^d}\widehat{G}_t(\eta)f(\eta) U_{\theta}(\eta) d\eta.
\end{eqnarray*}
Moreover,  the distribution  $G_t\ast u\in \mathcal{S}'(\mathbb{R}_{\theta}^d)$  is defined by the formula
\begin{equation}\label{Convolution_Distribution}
\langle G_t\ast u, v\rangle = \langle u, G_t \ast v\rangle,\quad u\in L^1(\mathbb{R}_{\theta}^d), \quad v\in\mathcal{S}(\mathbb{R}_{\theta}^d). 
\end{equation}
Then, substituting $\eta=\frac{\xi}{\sqrt{4\pi{t}}}$ and applying the Riemann-Lebesgue type lemma (see, \cite[Lemma 2.5.]{HLW}) we obtain
\begin{eqnarray}\label{G_lim_1}
 \lim\limits_{t\to \infty}(4\pi{t})^{\frac{d}{2}}\langle G_t\ast u, v\rangle &\overset{\eqref{Convolution_Distribution}}=& \lim\limits_{t\to \infty}(4\pi{t})^{\frac{d}{2}}\langle u, G_t\ast v\rangle\nonumber\\
  &\overset{\eqref{Main_relation2}}=&  \lim\limits_{t\to \infty}(4\pi{t})^{\frac{d}{2}}\tau_{\theta}(u G_t\ast v)\nonumber\\
  &=& \lim\limits_{t\to \infty}(4\pi{t})^{\frac{d}{2}}\int\limits_{\mathbb{R}^d}\widehat{G}_t(\eta) \widehat{u}(-\eta)f(\eta)d\eta\\
&\overset{\eta=\frac{\xi}{\sqrt{4\pi{t}}}}=& \lim\limits_{t\to \infty}\int\limits_{\mathbb{R}^d}\widehat{G}_t(\frac{\xi}{\sqrt{4\pi{t}}})\widehat{u}(-\frac{\xi}{\sqrt{4\pi{t}}})f(\frac{\xi}{\sqrt{4\pi{t}}})d\xi\nonumber\\
&=&\widehat{u}(0)f(0).\nonumber
\end{eqnarray} 
It follows from \eqref{Main_eqlation1} and \eqref{Def_Id_op1} that
\begin{equation}\label{G_lim_2}
\langle \widehat{u}(0)I, v\rangle=\langle I, \widehat{u}(0)v\rangle \overset{\eqref{Main_eqlation1}\eqref{Def_Id_op1}}=\langle  \delta, \widehat{u}(0)f\rangle= \tau_\theta(u)f(0).\nonumber  
\end{equation}
That is, we have the convergence
\begin{equation}\label{distributional_convergence_of_heat_flow}
    (4\pi t)^{d/2}G_t\ast u \rightarrow 
    \tau_\theta(u)I,\quad t\to\infty
\end{equation}
in the sense of distributions. 

From Lemma \ref{HK_Proposition1}(d), we also have
\[
    \sup_{t>0}\, (4\pi t)^{\frac{d}{2}}\|G_t\ast u\|_{L^\infty(\mathbb{R}^d_\theta)} \leq \|u\|_{L^1(\mathbb{R}^d_\theta)}.
\]
Since $\mathcal{S}(\mathbb{R}^d_\theta)$ is dense in $L^1(\mathbb{R}^d_\theta)$ (see, \cite[Remark 2.9. p.504]{MSX}), it follows that 
\[
    (4\pi t)^{\frac{d}{2}}G_t\ast u\rightarrow \tau_\theta(u)I
\]
in the weak$^*$-topology.
\end{proof}

Recall the notation $\mathcal{G}^{\theta}_t$ from \eqref{def_G_operator}.
\begin{cor}\label{heat_operator_bounds}
    As $t\to\infty,$ we have
    \[
        (4\pi t)^{\frac{d}{2}}\mathcal{G}^\theta_t \to I
    \]
    in the weak$^*$-topology of $L^\infty(\mathbb{R}^d_\theta),$ where $I$ denotes the identity operator in $L^{\infty}(\mathbb{R}^d_\theta).$
\end{cor}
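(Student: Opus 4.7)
The plan is to deduce the corollary from Lemma \ref{abelian_type_lemma} by exploiting the semigroup property of $\mathcal{G}^{\theta}_t$, since the identity $I$ itself is not in $L^1(\mathbb{R}^d_\theta)$ and Lemma \ref{abelian_type_lemma} cannot be applied to it directly.

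First I would fix an arbitrary $s > 0$ and observe that $\mathcal{G}^{\theta}_s \in \mathcal{S}(\mathbb{R}^d_\theta) \subseteq L^1(\mathbb{R}^d_\theta)$ (by the definition \eqref{NC_Schwartz} and the density stated after it). By the semigroup property of Lemma \ref{Proposition_1}, we have
\[
    \mathcal{G}^{\theta}_{t+s} = G_t \ast \mathcal{G}^{\theta}_s, \quad t > 0.
\]
Applying Lemma \ref{abelian_type_lemma} with $u = \mathcal{G}^{\theta}_s$ then yields
\[
    (4\pi t)^{d/2} \mathcal{G}^{\theta}_{t+s} = (4\pi t)^{d/2} G_t \ast \mathcal{G}^{\theta}_s \;\longrightarrow\; \tau_\theta(\mathcal{G}^{\theta}_s)\, I
\]
in the weak$^*$-topology of $L^{\infty}(\mathbb{R}^d_\theta)$ as $t \to \infty$. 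To compute the prefactor, I would combine the positivity result of Lemma \ref{heat_operator_is_positive} with Lemma \ref{HK_Proposition1}(a) to conclude that
\[
    \tau_\theta(\mathcal{G}^{\theta}_s) = \|\mathcal{G}^{\theta}_s\|_{L^1(\mathbb{R}^d_\theta)} = 1.
\]

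Finally, I would perform the change of variable $r = t+s$. As $t \to \infty$, also $r \to \infty$, and
\[
    \frac{(4\pi r)^{d/2}}{(4\pi(r-s))^{d/2}} = \left(1 - \frac{s}{r}\right)^{-d/2} \longrightarrow 1.
\]
Since scalar multiplication by a convergent sequence is continuous in the weak$^*$-topology, multiplying the previously obtained limit by this ratio gives
\[
    (4\pi r)^{d/2} \mathcal{G}^{\theta}_r \;\longrightarrow\; I
\]
in the weak$^*$-topology of $L^{\infty}(\mathbb{R}^d_\theta)$, which is the desired conclusion. There is no substantial obstacle here; the only thing to verify carefully is that Lemma \ref{abelian_type_lemma} is applicable to $\mathcal{G}^{\theta}_s$ (which is immediate since Schwartz elements lie in $L^1(\mathbb{R}^d_\theta)$) and that scalar rescaling preserves weak$^*$-convergence, which is a standard elementary observation.
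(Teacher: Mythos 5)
Your proof is correct and takes essentially the same route as the paper: the paper writes $\mathcal{G}^\theta_t = G_{t-1}\ast \mathcal{G}^\theta_1$ (i.e., your argument with $s=1$), notes $\tau_\theta(\mathcal{G}^\theta_1)=1$, and invokes Lemma \ref{abelian_type_lemma}. Your version makes explicit the harmless shift from $(4\pi(t-s))^{d/2}$ to $(4\pi t)^{d/2}$, which the paper leaves implicit.
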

\begin{proof}
    From Lemma \ref{Proposition_1}, for $t>1,$ we have
    \[
        \mathcal{G}^\theta_t = G_{t-1}\ast \mathcal{G}^\theta_{1}.
    \]
    Since $\tau_\theta(\mathcal{G}^\theta_1)=1$ (Lemma \ref{HK_Proposition1}) we conclude the result from Lemma \ref{abelian_type_lemma}.\end{proof}

\begin{proof}[Proof of Theorem \ref{tauberian_type_theorem}]
    For $t\geq 0,$ let
    \[
        v_t := u^{\frac12}\cdot(4\pi (t+1))^{\frac{d}{2}}\mathcal{G}_{t+1}^\theta\cdot u^{\frac12}, \,\ 0\leq u\in L^{\infty}(\mathbb R^d_{\theta}).
    \]
    By Corollary \ref{heat_operator_bounds}, in the weak$^*$-topology we have
    \[
        v_t\to u
    \]
    as $t\to\infty.$ Hence, if $P\in L^1(\mathbb{R}^d_\theta)$ is a projection, we have
    \[
        \tau_\theta(PuP) = \lim_{t\to\infty} \tau_\theta(Pv_tP) \leq \liminf_{t>0} \tau_\theta(v_t).
    \]
    Since $\mathcal{G}_t^\theta\in L^1(\mathbb{R}^d_\theta),$ we compute
    \begin{align*}
        \tau_\theta(v_t) &= (4\pi (t+1))^{\frac{d}{2}}\tau_\theta(\mathcal{G}_{t+1}^\theta u)\\
                  &= (4\pi (t+1))^{\frac{d}{2}}\tau_\theta((G_t\ast \mathcal{G}_1^{\theta})u)\\
                  &= (1+1/t)^{\frac{d}{2}}\cdot (4\pi t)^{\frac{d}{2}}\tau_\theta(\mathcal{G}_1^\theta (G_t\ast u))\\
                  &\leq (1+1/t)^{\frac{d}{2}}\|\mathcal{G}_1^\theta\|_{L^1(\mathbb{R}^d_\theta)}\sup_{s\geq 0} (4\pi s)^{\frac{d}{2}}\|G_s\ast u\|_{L^{\infty}(\mathbb{R}^d_\theta)}.
    \end{align*}
    Hence, for any $\tau_\theta$-finite projection $P$ we have
    \[
        \tau_\theta(PuP) \leq \|\mathcal{G}_1^\theta\|_{L^1(\mathbb{R}^d_\theta)}\sup_{s\geq 0} (4\pi s)^{\frac{d}{2}}\|G_s\ast u\|_{L^{\infty}(\mathbb{R}^d_\theta)} < \infty.
    \]
    By the Fatou property of $L^1(\mathbb{R}^d_\theta)$ (see, \cite[Theorem 3.4.17. p.151]{DPS}), if $u\geq 0,$ then
    \[
        \tau_\theta(u)\leq \liminf_{P\uparrow 1} \tau_\theta(PuP),
    \]
     with the understanding that if the right hand side is finite, then $u\in L^1(\mathbb{R}^d_\theta).$ Hence, $\tau(u)<\infty.$ 
     Since $(4\pi t)^{\frac{d}{2}}G_t\ast u$ converges to $\tau_\theta(u)I$ in the weak$^*$ topology, by the uniform boundedness principle we have
     \[
        \tau_\theta(u)\leq \sup_{t\geq 0} (4\pi t)^{\frac{d}{2}}\|G_t\ast u\|_{L^\infty(\mathbb{R}^d_\theta)}.
     \]
     This completes the proof.
\end{proof}

\section{Nonlinearity estimate} 
In this section, we prove a fundamental inequality in general semifinite von Neumann algebras that is essential in the analysis of nonlinear partial differential equations in noncommutative setting, particularly for establishing global existence and local well-posedness.

Let $\mathcal{M}$ be a semifinite von Neumann algebra acting on a Hilbert space $H$, equipped with a faithful normal semifinite trace $\tau$.
Denote by $L^0(\mathcal{M})$ the $\ast$-algebra of all $\tau$-measurable operators affiliated with $\mathcal{M}$. For $1\le p<\infty$, the noncommutative $L^p$-space associated with $(\mathcal{M},\tau)$ is defined by
\[
L^p(\mathcal{M},\tau)
=
\left\{ A \in L^0(\mathcal{M}) : \tau(|A|^p)<\infty \right\},
\]
where $|A|=(A^*A)^{1/2}$.
The space $L^p(\mathcal{M},\tau)$ is a Banach space for $1\le p<\infty,$   equipped with the norm
\[
\|A\|_{L^p(\mathcal{M},\tau)} = \left(\tau(|A|^p)\right)^{1/p}.
\]
For $p=\infty$, one sets $L^\infty(\mathcal{M},\tau)=\mathcal{M}$ endowed with the operator norm. For the general theory of $\tau$-measurable operators
and noncommutative $L^p$-spaces associated with semifinite von Neumann algebras $\mathcal{M}$ we refer the reader to \cite{DPS, LSZ, PXu}.

This section is devoted to the proof of the following important inequality:
\begin{thm}\label{nonlinearity_estimate_theorem}
Let $p\geq 1$ and $1\leq q\leq\infty.$ There exists a constant $c_p>0$ such that for all $0\leq A,B \in L^{p\cdot{q}}(\mathcal{M},\tau),$ we have
\begin{equation}\label{nonlinearity_estimate_thm}
\|A^p-B^p\|_{L^q(\mathcal{M},\tau)} \leq c_p\|A^{p-1}(A-B)+(A-B)B^{p-1}\|_{L^q(\mathcal{M},\tau)},
\end{equation}
where $p\cdot q$ denotes the product of $p$ and $q.$
\end{thm}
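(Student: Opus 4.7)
The plan is to reformulate both sides of \eqref{nonlinearity_estimate_thm} as double operator integrals (DOI) in the sense of Birman--Solomyak and to reduce the inequality to a uniform Schur-multiplier bound on $L^q(\mathcal{M},\tau)$. Write $X := A - B$ and $Y := A^{p-1}X + X B^{p-1}$. The divided-difference identity gives
$A^p - B^p = T^{A,B}_{f^{[1]}}(X)$ with $f^{[1]}(x,y) = \tfrac{x^p - y^p}{x-y}$, while $Y = T^{A,B}_g(X)$ with $g(x,y) = x^{p-1} + y^{p-1}$. Assume first that $A, B$ are invertible with strictly positive spectrum. Then the Sylvester equation $A^{p-1}X + X B^{p-1} = Y$ has the unique solution $X = \int_0^\infty e^{-tA^{p-1}} Y e^{-tB^{p-1}}\,dt$, so composing gives $A^p - B^p = T^{A,B}_{\varphi}(Y)$, where the symbol is
$\varphi(x,y) = \frac{x^p - y^p}{(x-y)(x^{p-1} + y^{p-1})} = \tilde\varphi(y/x)$, $\tilde\varphi(t) = \frac{1 - t^p}{(1-t)(1 + t^{p-1})}$, $t \in (0,\infty).$

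The crux is to establish a Mellin integral representation $\tilde\varphi(t) = \int_{\mathbb{R}} t^{is}\,d\mu(s)$ for a finite Borel measure $\mu$ on $\mathbb{R}$. The change of variables $t = e^s$ converts this into showing that
$\Psi(s) := \tilde\varphi(e^s) = \frac{\sinh(ps/2)}{2\sinh(s/2)\,\cosh((p-1)s/2)}$
is the Fourier transform of $\mu$. One checks directly that $\Psi$ is smooth on $\mathbb{R}$ (the apparent singularity at $0$ is removable, with $\Psi(0) = p/2$), that $\Psi(s) \to 1$ as $|s| \to \infty$, and that $\Psi - 1$ decays exponentially, since $\Psi$ extends meromorphically to the strip $|\Im s| < \min\{2\pi,\, \pi/(p-1)\}$. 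Together with analogous bounds for all derivatives, this places $\Psi - 1$ in the Schwartz class $\mathcal{S}(\mathbb{R})$; in particular its inverse Fourier transform $h$ lies in $L^1(\mathbb{R})$, and $\mu = \delta_0 + h(s)\,ds$ with $\|\mu\| \leq 1 + \|h\|_{L^1}$.

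Substituting the Mellin representation into the DOI identity, the symbol decomposition $\tilde\varphi(y/x) = \int_{\mathbb{R}} x^{-is} y^{is}\,d\mu(s)$ yields the operator identity
$A^p - B^p \;=\; \int_{\mathbb{R}} A^{-is}\, Y\, B^{is}\, d\mu(s)$. Since $A, B$ are positive and invertible, $A^{-is}$ and $B^{is}$ are unitary; each integrand is therefore an $L^q$-isometric image of $Y$, and the Bochner triangle inequality gives
$\|A^p - B^p\|_{L^q(\mathcal{M},\tau)} \leq \|\mu\|\,\|Y\|_{L^q(\mathcal{M},\tau)}$,
which is \eqref{nonlinearity_estimate_thm} with $c_p = \|\mu\|$. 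For general $A, B \geq 0$, one restricts to the corner $Q\mathcal{M}Q$, where $Q = s(A) \vee s(B)$ (outside this corner both sides of \eqref{nonlinearity_estimate_thm} vanish), applies the invertible case to $A_\epsilon = A + \epsilon Q$ and $B_\epsilon = B + \epsilon Q$, and passes to the limit $\epsilon \to 0^+$ via strong continuity of the functional calculus combined with a noncommutative dominated convergence argument in $L^q$.

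The main technical obstacle will be the harmonic analysis of Step~2: verifying rigorously that $\Psi - 1$ is the Fourier transform of an $L^1$ function via the Paley--Wiener principle and controlling $\|\mu\|$ as a function of $p$. The rest of the argument, including the approximation step for non-invertible $A, B$ and the justification of the interchange of the integral and the trace in the Sylvester identity, is largely bookkeeping within the standard DOI formalism.
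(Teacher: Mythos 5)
Your proposal is correct and follows essentially the same route as the paper: you identify the same DOI symbol $\varphi(x,y)=\frac{x^p-y^p}{(x-y)(x^{p-1}+y^{p-1})}$, pass to logarithmic coordinates to get $\Psi(s)=\frac{\sinh(ps/2)}{2\sinh(s/2)\cosh((p-1)s/2)}$, observe that $\Psi-1$ (which equals the paper's $\tfrac12 f$) is Schwartz, and bound the DOI via Fourier inversion and the triangle inequality, exactly as in Lemmas~\ref{schwartz_class_lemma}--\ref{doi_lemma}. The only presentational differences are that you reach the symbol by a Sylvester-equation detour rather than the direct algebraic identity $\varphi\cdot(\lambda-\mu)(\lambda^{p-1}+\mu^{p-1})=\lambda^p-\mu^p$, use Paley--Wiener rather than elementary $\sinh$/$\mathrm{sech}$ growth estimates to establish the Schwartz property, and treat the reduction to invertible $A,B$ more explicitly than the paper's one-line convention $0^{i\xi}=0$.
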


From this theorem and the noncommutative Minkowskii \cite{PXu} and  H\"{o}lder inequalities \cite[Theorem 1]{Sukochev}, we immediately obtain the following:
\begin{cor}\label{nonlinearity_estimate_corollary}
Let $p\geq 1$ and $1\leq q\leq\infty.$ There exists a constant $C_p>0$ such that for all $0\leq A,B\in L^{p\cdot{q}}(\mathcal{M},\tau),$ we have
\begin{equation}\label{nonlinearity_estimate}
\|A^p-B^p\|_{L^q(\mathcal{M},\tau)}\leq C_p\|A-B\|_{L^{p\cdot{q}}(\mathcal{M},\tau)}(\|A\|_{L^{p\cdot{q}}(\mathcal{M},\tau)}^{p-1}+\|B\|_{L^{p\cdot{q}}(\mathcal{M},\tau)}^{p-1}),
\end{equation}
\end{cor}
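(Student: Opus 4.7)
My plan is to prove \eqref{nonlinearity_estimate_thm} via the machinery of double operator integrals (DOI). After replacing $A,B$ by $A+\varepsilon, B+\varepsilon$ and letting $\varepsilon\to 0^+$ (using the $L^q$-continuity of the functional calculus $C\mapsto C^p$ for $C\in L^{pq}(\mathcal{M})$), I may assume that $A$ and $B$ are bounded and strictly positive. Define the divided-difference and sum symbols
\[
\Phi_p(\lambda,\mu):=\frac{\lambda^p-\mu^p}{\lambda-\mu},\qquad \Psi_p(\lambda,\mu):=\lambda^{p-1}+\mu^{p-1},\qquad \lambda,\mu>0,
\]
and let $T_\phi^{A,B}(X):=\iint \phi(\lambda,\mu)\,dE_A(\lambda)\,X\,dE_B(\mu)$ denote the associated DOI. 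The standard operator-theoretic identity then yields
\[
A^p-B^p=T_{\Phi_p}^{A,B}(A-B),\qquad A^{p-1}(A-B)+(A-B)B^{p-1}=T_{\Psi_p}^{A,B}(A-B).
\]

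Since $\Psi_p>0$ on the joint spectrum of $A,B$, the DOI composition rule yields $T_{\Phi_p}^{A,B}=T_{\Phi_p/\Psi_p}^{A,B}\circ T_{\Psi_p}^{A,B}$, and the theorem reduces to the claim that $T_{\Phi_p/\Psi_p}^{A,B}$ is bounded on $L^q(\mathcal{M})$ with an operator norm $c_p$ that is independent of $q\in[1,\infty]$. The quotient symbol is homogeneous of degree zero and hence depends only on $s=\lambda/\mu$; the further substitution $s=e^t$ reveals
\[
h_p(t):=\frac{\sinh(pt/2)}{\sinh(pt/2)+\sinh((2-p)t/2)},
\]
a smooth, even, bounded function with $h_p(t)\to 1$ exponentially as $|t|\to\infty$ (with the convention $h_1\equiv 1/2$ and $h_2\equiv 1$).

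The decisive step is to represent $h_p$ as the Fourier transform of a finite complex Borel measure $\nu_p$ on $\mathbb{R}$. Because $h_p-h_p(\infty)$ is smooth with exponentially decaying derivatives, its inverse Fourier transform $\rho_p$ lies in $\mathcal{S}(\mathbb{R})\subset L^1(\mathbb{R})$, and $\nu_p:=h_p(\infty)\,\delta_0+\rho_p\,d\lambda$ is then a finite measure with $\widehat{\nu_p}=h_p$. Writing $\tfrac{\Phi_p}{\Psi_p}(\lambda,\mu)=h_p(\log\lambda-\log\mu)=\int_\mathbb{R}\lambda^{-it}\mu^{it}\,d\nu_p(t)$ yields the explicit integral representation
\[
T_{\Phi_p/\Psi_p}^{A,B}(X)=\int_\mathbb{R}A^{-it}XB^{it}\,d\nu_p(t).
\]
Each map $X\mapsto A^{-it}XB^{it}$ is an isometry of $L^q(\mathcal{M})$ for every $q\in[1,\infty]$ (multiplication by the unitaries $A^{it}$ and $B^{it}$), and therefore $\|T_{\Phi_p/\Psi_p}^{A,B}\|_{L^q\to L^q}\le|\nu_p|(\mathbb{R})=:c_p$, proving the inequality.

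The principal obstacle will be the Fourier-analytic step: verifying that $h_p-h_p(\infty)$ and all its derivatives genuinely decay rapidly enough at $\pm\infty$, uniformly in $p\ge 1$. This requires careful asymptotic analysis of ratios of hyperbolic sines, together with attention to the boundary values $p=1,2$ where the symbol collapses to a constant and $\nu_p$ reduces to a multiple of $\delta_0$. A secondary technical concern is justifying the DOI composition and the integral representation rigorously in the semifinite setting, for which the initial reduction to bounded strictly positive $A,B$ is essential, since $A^{it}$ is a bona fide unitary only in that setting.
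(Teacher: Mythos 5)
Your route is essentially the paper's: you identify the same degree-zero-homogeneous divided-difference quotient $\Phi_p/\Psi_p$, pass to the variable $t=\log(\lambda/\mu)$ to obtain the same hyperbolic ratio (your $h_p$ equals $1+\tfrac12 f$ with $f(t)=\sinh((\tfrac{p}{2}-1)t)/\bigl(\cosh(\tfrac{p-1}{2}t)\sinh(\tfrac{t}{2})\bigr)$, the function of the paper's Lemma~\ref{schwartz_class_lemma}), subtract the constant limit at infinity, and represent the remainder as $\int_{\mathbb R} A^{-i\xi}XB^{i\xi}\,d\nu(\xi)$ with $\nu$ finite. The only genuine packaging difference is that the paper applies $T_{\Phi_p/\Psi_p}^{A,B}$ directly to $A^{p-1}(A-B)+(A-B)B^{p-1}$ rather than invoking the DOI product rule to split off $T_{\Psi_p}^{A,B}$; these are equivalent once one checks $T_{\Psi_p}^{A,B}(A-B)=A^{p-1}(A-B)+(A-B)B^{p-1}$.

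Three points deserve attention. First, the statement you were asked to prove is Corollary~\ref{nonlinearity_estimate_corollary}, not Theorem~\ref{nonlinearity_estimate_theorem}; your argument (correctly) targets the theorem, and you should record the one-line deduction of the corollary via the triangle and H\"older inequalities, $\|A^{p-1}(A-B)\|_{L^q}\le \|A\|_{L^{pq}}^{p-1}\|A-B\|_{L^{pq}}$. Second, the $\varepsilon$-regularization $A\mapsto A+\varepsilon I$ is not available in a general semifinite algebra: if $\tau(\mathbf 1)=\infty$ then $A+\varepsilon I\notin L^{pq}(\mathcal{M},\tau)$, so you cannot reduce to strictly positive $A,B$ this way. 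The paper avoids this entirely by defining the symbol at $\lambda=0$ or $\mu=0$ and adopting the convention $0^{i\xi}=0$, so that $A^{i\xi}$ is interpreted as a partial isometry and the representation $T_{\psi}^{A,B}(X)=\tfrac12\int\widehat f(\xi)A^{i\xi}XB^{-i\xi}\,d\xi$ still gives a contraction on each $L^q$. You should adopt that convention rather than regularize. Third, the step you flag as the principal obstacle---that $h_p-1$ is Schwartz---is exactly Lemma~\ref{schwartz_class_lemma} and does need a genuine argument, not just the assertion "smooth with exponentially decaying derivatives." Factoring $f(t)=\dfrac{\sinh((\tfrac p2-1)t)}{\sinh(\tfrac t2)}\cdot\mathrm{sech}\bigl(\tfrac{p-1}{2}t\bigr)$, the first factor and all its derivatives grow at most like $e^{(|p-2|-1)|t|/2}$ while the second factor and its derivatives decay like $e^{-(p-1)|t|/2}$, and the product decays because $|p-2|<p$ for every $p>1$; that is the inequality you actually need, not uniformity in $p$ (the constant $c_p$ is permitted to depend on $p$, and in fact degenerates as $p\downarrow 1$).
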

Here we have written $p\cdot q$ for the product $pq$ to avoid confusion with the Lorentz spaces $L^{p,q}(\mathcal{M},\tau)$ which do not appear in this paper.

Similar estimates for $0<p<1$ are well known. See e.g. \cite{BirmanKaradzhovSolomyak, HuangSukochevZanin, Ricard, Sobolev}. For $q=\infty,$ Corollary \ref{nonlinearity_estimate_corollary} follows at once from Peller's Besov space a sufficient condition for a function to be operator Lipschitz \cite{Peller}. 

For non-positive operators, the best that can be achieved is the following, which is a combination of Corollary \ref{nonlinearity_estimate} and the Lipschitz estimate for the absolute value function on $L^q(\mathcal{M},\tau)$ \cite[Theorem~2.2]{DDdPS}.
\begin{cor}\label{nonlinearity_estimate_nonpositive}
    Let $A,B\in L^{p\cdot q}(\mathcal{M},\tau).$ For all $p\geq 1$ and $1<q<\infty,$ there exists a constant $C_{p,q}$ such that
    \[
        \||A|^p-|B|^p\|_{L^q(\mathcal{M},\tau)} \leq C_{p,q}\|A-B\|_{L^{p\cdot q}(\mathcal{M},\tau)}(\|A\|_{L^{p\cdot q}(\mathcal{M},\tau)}^{p-1}+\|B\|_{L^{p\cdot q}(\mathcal{M},\tau)}^{p-1}).
    \]
\end{cor}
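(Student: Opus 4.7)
The plan is to reduce the estimate for arbitrary $A,B\in L^{p\cdot q}(\mathcal{M},\tau)$ to the positive case already handled in Corollary \ref{nonlinearity_estimate_corollary}, by composing with the absolute value map and quoting its $L^r$-Lipschitz property from \cite{DDdPS}. The argument is a two-step chain; the excerpt essentially states this reduction explicitly.

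\textbf{Step 1 (positive case).} Since $|A|$ and $|B|$ are positive elements of $L^{p\cdot q}(\mathcal{M},\tau)$, I apply Corollary \ref{nonlinearity_estimate_corollary} directly to the pair $(|A|,|B|)$ to obtain
\begin{equation*}
\||A|^p-|B|^p\|_{L^q(\mathcal{M},\tau)}
\leq
C_p\,\||A|-|B|\|_{L^{p\cdot q}(\mathcal{M},\tau)}
\bigl(\||A|\|_{L^{p\cdot q}(\mathcal{M},\tau)}^{p-1}
+\||B|\|_{L^{p\cdot q}(\mathcal{M},\tau)}^{p-1}\bigr).
\end{equation*}
Since $\||T|\|_{L^r(\mathcal{M},\tau)}=\|T\|_{L^r(\mathcal{M},\tau)}$ for every $\tau$-measurable $T$ (immediate from $\|T\|_{L^r}=\tau(|T|^r)^{1/r}$), the two factors involving norms of $|A|$ and $|B|$ become $\|A\|_{L^{p\cdot q}}^{p-1}+\|B\|_{L^{p\cdot q}}^{p-1}$.

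\textbf{Step 2 (Lipschitz estimate for $|\cdot|$).} I invoke \cite[Theorem~2.2]{DDdPS}, which asserts that for any $1<r<\infty$ there exists a constant $K_r>0$ such that
\begin{equation*}
\||A|-|B|\|_{L^r(\mathcal{M},\tau)} \leq K_r\,\|A-B\|_{L^r(\mathcal{M},\tau)},\qquad A,B\in L^r(\mathcal{M},\tau).
\end{equation*}
I apply this with $r=p\cdot q$, which lies in $(1,\infty)$ because $p\geq 1$ and $1<q<\infty$ (so $p\cdot q\geq q>1$ and $p\cdot q<\infty$). Substituting into the estimate from Step 1 and setting $C_{p,q}:=C_p\,K_{p\cdot q}$ yields the claim.

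The only delicate ingredient is the Lipschitz continuity of $|\cdot|$ on $L^{p\cdot q}(\mathcal{M},\tau)$; the restriction $1<q<\infty$ in the hypotheses is dictated precisely by the range of validity of this estimate, since Davies' classical example shows that $T\mapsto|T|$ is not Lipschitz in operator norm (i.e.\ at $r=\infty$), and similarly fails at $r=1$. Beyond this cited input, no further analysis is required: the proof is a direct concatenation of the two displayed inequalities.
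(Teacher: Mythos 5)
Your proof is correct and follows exactly the route the paper indicates: apply Corollary~\ref{nonlinearity_estimate_corollary} to the positive operators $|A|,|B|$, then use the Lipschitz property of $T\mapsto|T|$ on $L^{p\cdot q}(\mathcal{M},\tau)$ from \cite[Theorem~2.2]{DDdPS} to pass from $\||A|-|B|\|_{L^{p\cdot q}}$ to $\|A-B\|_{L^{p\cdot q}}$. The paper only states this as a one-line remark, so your write-up (including the correct observation that the Lipschitz estimate must be invoked at exponent $p\cdot q$, not $q$, and that this forces $1<q<\infty$) is a faithful and complete elaboration of the intended argument.
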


The proof of Theorem~\ref{nonlinearity_estimate_theorem} relies on the following auxiliary lemmas.
\begin{lem}\label{schwartz_class_lemma}
    Let $p>1.$ The function
    \[
        f(t) := \frac{\sinh(\left(\frac{p}{2}-1\right)t)}{\cosh\left(\frac{p-1}{2}t\right)\sinh(\frac{t}{2})},\quad t \in \mathbb{R},
    \]
    belongs to the Schwartz class $\mathcal{S}(\mathbb{R}).$
\end{lem}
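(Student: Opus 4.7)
The plan is to view $f$ as the restriction to $\mathbb{R}$ of a meromorphic function on $\mathbb{C}$ which is holomorphic in a horizontal strip around the real axis and decays exponentially there; Schwartz-class membership then follows from Cauchy's integral formula.

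First I would confirm that $f$ extends smoothly through $t=0$. Both $\sinh((p/2-1)t)$ and $\sinh(t/2)$ vanish simply at $t=0$, while $\cosh((p-1)t/2)|_{t=0}=1$, so the apparent singularity is removable. Concretely, the representation
\[
    f(t)=\frac{\sinh((p/2-1)t)/t}{\cosh((p-1)t/2)\cdot\sinh(t/2)/t}
\]
exhibits $f$ as a ratio of entire functions with denominator nonzero at $0$, hence real-analytic on $\mathbb{R}$. (If $p=2$ then $f\equiv 0$ and there is nothing further to prove.)

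Next I would identify a strip of holomorphy. The zeros of $\sinh(z/2)$ lie at $z=2\pi i k$ and those of $\cosh((p-1)z/2)$ at $z=i\pi(2k+1)/(p-1)$, $k\in\mathbb{Z}$. Fix any $0<r<\min(2\pi,\pi/(p-1))$; then $f$ is holomorphic on the open strip $\Omega_r:=\{z\in\mathbb{C}:|\operatorname{Im} z|<r\}$. To establish exponential decay there, I would apply the elementary identities $|\sinh(a(x+iy))|^2=\sinh^2(ax)+\sin^2(ay)$ and $|\cosh(a(x+iy))|^2=\sinh^2(ax)+\cos^2(ay)$. These yield $|\sinh((t+is)/2)|\ge|\sinh(t/2)|\gtrsim e^{|t|/2}$ and $|\cosh((p-1)(t+is)/2)|\ge|\sinh((p-1)t/2)|\gtrsim e^{(p-1)|t|/2}$ for $|t|$ large, together with the trivial upper bound $|\sinh((p/2-1)(t+is))|\lesssim e^{|p/2-1|\,|t|}+1$. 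Combining, I obtain $|f(t+is)|\lesssim e^{-\alpha|t|}$ uniformly for $|s|\le r/2$, where $\alpha:=p/2-|p/2-1|=\min(1,p-1)>0$.

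Finally, Cauchy's integral formula on the circle $|z-t|=r/2$ gives
\[
    |f^{(n)}(t)|\le\frac{n!}{(r/2)^n}\sup_{|z-t|=r/2}|f(z)|\le C_n\,e^{-\alpha|t|},\qquad t\in\mathbb{R},\ n\ge 0.
\]
Hence every $t^k f^{(n)}(t)$ is bounded on $\mathbb{R}$, so $f\in\mathcal{S}(\mathbb{R})$. The main technical point is securing the uniform exponential decay in a strip, which reduces to the explicit hyperbolic identities above; the case $p$ close to $1$ is delicate only because the decay rate $\alpha=p-1$ becomes small, but any positive rate suffices for Schwartz membership.
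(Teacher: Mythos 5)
Your proof is correct, and it takes a genuinely different route from the paper's. The paper's argument is elementary and real-variable: it factors $f(t)=g_{p-2}(t/2)\,\mathrm{sech}\bigl(\tfrac{p-1}{2}t\bigr)$ with $g_\alpha(t)=\sinh(\alpha t)/\sinh t$, asserts that $g_\alpha$ and all its derivatives grow at most like $e^{|\alpha||t|}$ while $\mathrm{sech}(\beta t)$ and its derivatives decay like $e^{-|\beta||t|}$, and concludes from the resulting product bound. You instead extend $f$ holomorphically to the strip $\{|\operatorname{Im}z|<\min(2\pi,\pi/(p-1))\}$, prove exponential decay on a closed substrip via the identities $|\sinh(a(x+iy))|^2=\sinh^2(ax)+\sin^2(ay)$ and $|\cosh(a(x+iy))|^2=\sinh^2(ax)+\cos^2(ay)$, and invoke Cauchy's estimate on small circles to obtain exponential decay of every derivative in one step. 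The complex-analytic route has the advantage that one pointwise estimate controls all derivatives at once, whereas the real-variable route needs a (sketched) separate claim about the derivatives of $g_\alpha$ and $\mathrm{sech}$. It is also more robust: the paper's closing assertion ``$|p-2|<|p-1|$ for $p>1$'' is in fact false on the range $1<p\le\tfrac32$ (there $|p-2|=2-p\ge\tfrac12>p-1=|p-1|$), so the paper's proof as written does not close for those $p$; it can be repaired by sharpening the growth bound on $g_\alpha$ to $O\bigl(e^{(|\alpha|-1)|t|}\bigr)$, but your strip argument sidesteps the issue entirely. Both approaches ultimately give the same decay exponent $\min(1,p-1)$, and the delicacy near $p=1$ that you flag is exactly the slow decay $e^{-(p-1)|t|}$.
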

\begin{proof}
    Observe that for $\alpha\in \mathbb{R},$ the function
    \[
        g_{\alpha}(t) = \frac{\sinh(\alpha t)}{\sinh(t)}
    \]
    and all of its derivatives grow no faster than $e^{|\alpha| |t|}$ as $t\to \pm\infty.$ That is, for all $k\geq 0$ there is a constant $C_k$ such that
    \[
        |g^{(k)}(t)| \leq C_ke^{|\alpha| |t|},\quad t \in \mathbb{R}.
    \]
    On the other hand for $\beta\in \mathbb{R},$ the function $f_{\beta}(t) = \mathrm{sech}(\beta t)$ and all of its derivatives decay faster than $e^{-|\beta| |t|}$ as $|t|\to \pm\infty,$ we conclude that
    \[
        f(t) = g_{p-2}(t/2)g_{p-1}(t/2),\quad t \in \mathbb{R},
    \]
    is Schwartz class, since $|p-2|<|p-1|$ for $p>1.$  
    This completes the proof.
\end{proof}
Given a bounded Borel function $\phi$ on $\mathbb{R}^2$ and self-adjoint operators $A$ and $B$ affiliated with $\mathcal{M}.$ The double operator integral $T^{A,B}_{\phi}(X)$ is given formally by
\begin{equation}\label{DOI}
T^{A,B}_{\phi}(X)
=
\int\limits_{\mathbb{R}^{2}} \phi(\lambda,\mu) dE_{A}(\lambda) XdE_{B}(\mu),
\quad X\in \mathcal{M}+L^1(\mathcal{M}),
\end{equation}
where $E_{A}$ and $E_{B}$ are the spectral measures for $A$ and $B,$ respectively.  For technical reasons we instead define $T^{A,B}_{\phi}(X)$ only for functions $\phi$ admitting a decomposition of the form
\[
    \phi(\lambda,\mu) = \int\limits_{\Omega} a(\lambda,\omega)b(\mu,\omega) d\nu(\omega),
\]
where $(\Omega,\nu)$ is a probability space and $a,b$ bounded functions. For the theory of double operator integrals developed in this way, see \cite{DDSZII}. We will use the fact that
\[
    \|T^{A,B}_{\phi}(X)\|_{L^{p}(\mathcal{M},\tau)} \leq \int\limits_{\Omega} \sup_{\lambda\in \mathbb{R}} |a(\lambda,\omega)|\sup_{\mu\in \mathbb{R}} |b(\mu,\omega)|\,d|\nu|(\omega) \cdot \|X\|_{L^p(\mathcal{M},\tau)}.
\]
For more details we refer also to \cite{PWS}; see also \cite{BirmanKaradzhovSolomyak} and \cite[Section~3.5]{ST}.

\begin{lem}\label{doi_lemma}Let $p>1$ and $\phi$ be the function defined by
    \[
        \phi(\lambda,\mu) := \frac{\lambda^{p}-\mu^p}{(\lambda-\mu)(\lambda^{p-1}+\mu^{p-1})},\quad \lambda,\mu\geq 0,
    \]
    with $\phi(\lambda,\mu)=1$ if $\lambda$ or $\mu$ are zero. Then for all positive self-adjoint operators $A$ and $B$ affiliated to $\mathcal{M},$ the double operator integral
    $T^{A,B}_{\phi}$ defined by \eqref{DOI} is bounded on $L^q(\mathcal{M},\tau)$ for every $1\leq q\leq \infty.$
\end{lem}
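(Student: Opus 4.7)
\medskip

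\noindent\textbf{Proof plan.} The plan is to exhibit a factorization of $\phi$ of the form required by the DOI machinery of \cite{DDSZII}, and then read off the $L^q$-bound from the norm inequality recalled just above the statement. The Schwartz-class function of Lemma \ref{schwartz_class_lemma} will supply the key input.

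First, I would substitute $\lambda=e^s$, $\mu=e^r$ and simplify. Factoring common exponentials out of numerator and denominator (via $e^{a}-e^{b}=2e^{(a+b)/2}\sinh\!\bigl(\tfrac{a-b}{2}\bigr)$ and the analogous identity for sums) yields
\[
    \phi(e^s,e^r)=\frac{\sinh(pt/2)}{2\sinh(t/2)\cosh((p-1)t/2)},\qquad t:=s-r.
\]
The sum-to-product identity $\sinh(pt/2)-\sinh((p-2)t/2)=2\cosh((p-1)t/2)\sinh(t/2)$ then separates off the constant part:
\[
    \phi(e^s,e^r)=\tfrac{1}{2}f(s-r)+1,
\]
where $f$ is precisely the function shown to lie in $\mathcal{S}(\mathbb{R})$ in Lemma \ref{schwartz_class_lemma}.

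Next, since $\hat f\in\mathcal{S}(\mathbb{R})\subset L^1(\mathbb{R})$, Fourier inversion gives $f(s-r)=\frac{1}{2\pi}\int_{\mathbb{R}}\hat f(\xi)\,e^{i\xi s}e^{-i\xi r}\,d\xi$. Translating back, this produces
\[
    \phi(\lambda,\mu)=1+\frac{1}{4\pi}\int_{\mathbb{R}}\hat f(\xi)\,\lambda^{i\xi}\mu^{-i\xi}\,d\xi,\qquad \lambda,\mu>0,
\]
a representation of the form $\int_{\Omega} a(\lambda,\omega)b(\mu,\omega)\,d\nu(\omega)$ demanded by the DOI theory, with $a(\lambda,\xi)=\lambda^{i\xi}$, $b(\mu,\xi)=\mu^{-i\xi}$ (both of modulus one on $(0,\infty)$), weight $\tfrac{1}{4\pi}\hat f(\xi)\,d\xi$ and an additional Dirac mass at $\xi=0$ accounting for the constant term. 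The total variation of this weight is at most $1+\tfrac{1}{4\pi}\|\hat f\|_{L^1(\mathbb{R})}$. The norm bound recalled in the excerpt then delivers
\[
    \|T^{A,B}_\phi(X)\|_{L^q(\mathcal{M},\tau)}\le\Bigl(1+\tfrac{1}{4\pi}\|\hat f\|_{L^1(\mathbb{R})}\Bigr)\|X\|_{L^q(\mathcal{M},\tau)}
\]
for every $1\le q\le\infty$. One may equivalently read this off from the representation $T^{A,B}_\phi(X)=X+\tfrac{1}{4\pi}\int_{\mathbb{R}}\hat f(\xi)\,A^{i\xi}XB^{-i\xi}\,d\xi$, since $A^{i\xi}$ and $B^{i\xi}$ are unitary when $A,B$ are invertible and therefore preserve every $L^q$-norm.

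The one technical point I expect to need care is the behaviour at the kernels of $A$ and $B$, since $\lambda^{i\xi}$ is ill-defined at $\lambda=0$. I would handle this by decomposing $X$ using $P:=E_A(\{0\})$, $Q:=E_B(\{0\})$ and their complements: on the three pieces $PXQ$, $PX(I-Q)$, $(I-P)XQ$, the convention $\phi(\lambda,0)=\phi(0,\mu)=1$ forces $T^{A,B}_\phi$ to act as the identity, while on $(I-P)X(I-Q)$ both $A$ and $B$ are restricted to strictly positive spectrum and the Fourier representation above applies verbatim in the Borel functional calculus. A cleaner alternative is to replace $A,B$ by $A+\varepsilon I,B+\varepsilon I$, obtain a bound that is already uniform in $\varepsilon$, and then pass to the limit $\varepsilon\to 0$.
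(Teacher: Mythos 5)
Your proof is correct and follows essentially the same route as the paper: reduce $\phi$ to $1+\tfrac12 f(\log\lambda-\log\mu)$ with $f$ the Schwartz-class function from Lemma \ref{schwartz_class_lemma}, Fourier-invert, and bound the resulting representation $T^{A,B}_\phi(X)=X+\mathrm{const}\int\hat f(\xi)A^{i\xi}XB^{-i\xi}\,d\xi$ by the triangle inequality using the unitarity of $A^{i\xi},B^{-i\xi}$. Your direct hyperbolic computation replaces the paper's algebraic split $\phi=1+\psi$ but yields the identical decomposition, and you are in fact a touch more careful than the paper about the Fourier normalization constant and the treatment of the kernels of $A,B$, which the paper handles with the convention $0^{i\xi}=0$.
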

\begin{proof}
    Since
    \[
        (\lambda^{p-1}+\mu^{p-1})(\lambda-\mu) = \lambda^p-\mu^p-\lambda^{p-1}\mu+\mu^{p-1}\lambda, \quad \lambda,\mu\ge 0,
    \]
    we have
    \[
        \phi = 1+\psi, 
    \]
    where
    \[
        \psi(\lambda,\mu) = \frac{\lambda^{p-1}\mu-\mu^{p-1}\lambda}{(\lambda-\mu)(\lambda^{p-1}+\mu^{p-1})} 
    \]
    and $\psi(\lambda,\mu)=0$ whenever $\lambda$ or $\mu$ are zero.
    Hence, it suffices to show that $T^{A,B}_{\psi}$ is bounded on $L^q(\mathcal{M}, \tau).$ Writing $\lambda\mu^{-1}=e^t,$ we have
    \[
        \psi(\lambda,\mu) = \frac{1}{2}f(t)
    \]
    where $f$ is the function from Lemma \ref{schwartz_class_lemma}. Since $f$  belongs Schwartz class  $\mathcal{S}(\mathbb{R}),$ its Fourier transform $\widehat{f}$ is also in $\mathcal{S}(\mathbb{R}).$ Therefore, by the Fourier inversion formula we have
    \[
        \psi(\lambda,\mu) = \frac12\int_{-\infty}^\infty \widehat{f}(\xi)e^{i\xi t}\,d\xi = \frac12\int_{-\infty}^\infty \widehat{f}(\xi)\lambda^{i\xi}\mu^{-i\xi}\,d\xi.
    \]
    For $X\in L^q(\mathcal{M},\tau),$ we have
    \[
        T_{\psi}^{A,B}(X) = \frac12\int_{-\infty}^\infty \widehat{f}(\xi)A^{i\xi}XB^{-i\xi}\,d\xi.
    \]
    Observe that this is still true when $A$ and $B$ are not strictly positive, provided we use the convention that $0^{i\xi}=0$ for all $\xi\in \mathbb{R}.$ By the triangle inequality for $L^q(\mathcal{M},\tau),$ it follows that
    \[
        \|T_{\psi}^{A,B}(X)\|_{L^q(\mathcal{M},\tau)}\leq \frac12\int_{-\infty}^\infty |\widehat{f}(\xi)|\,d\xi \cdot \|X\|_{L^q(\mathcal{M},\tau)}, \,\ 1\leq q\leq \infty.
    \]
This completes the proof. 
\end{proof}
The proof of Lemma \ref{doi_lemma} also shows that the norm of $T_{\phi}^{A,B}$ is independent of $q,$ which we will use implicitly in the proof of Theorem \ref{nonlinearity_estimate_theorem}, but is ultimately unimportant for our applications.

Having gathered the necessary tools, we are now equipped to prove our main inequality, Theorem~\ref{nonlinearity_estimate_theorem}.
\begin{proof}[Proof of Theorem~\ref{nonlinearity_estimate_theorem}]
    The result is trivial for $p=1,$ hence, assume that $p>1.$
    Let $\phi$ be the function in Lemma \ref{doi_lemma}. We have
    \[
        A^p-B^p = T^{A,B}_{\phi}\left(A^{p-1}(A-B)+(A-B)B^{p-1}\right).
    \]
    Hence, by Lemma \ref{doi_lemma}, there is a constant $C_p>0$ such that
    \[
        \|A^{p}-B^p\|_{L^q(\mathcal{M},\tau)}\leq C_p\|A^{p-1}(A-B)+(A-B)B^{p-1}\|_{L^q(\mathcal{M},\tau)}, \,\ 1\leq q\leq \infty,
    \]
    thereby completing the proof.
\end{proof}
\begin{rem}\label{Exp-remark}Note that the inequality in Corollary \ref{nonlinearity_estimate_corollary} is essential in the analysis of nonlinear partial differential equations, particularly for establishing global existence and local well-posedness. By using this inequality we can extend the main results in \cite[Theorem 5.4. p.31]{RST2}, \cite[Theorem 4.2. p. 85]{RST1}, and \cite[Theorem 5.1.2. p. 16]{STT} to the full range $1\leq p<\infty$, which were previously obtained only for positive integer values of the parameter $p.$  
\end{rem}

\section{Local existence of the solution}
Let $f:[0,\infty)\to L^{\infty}(\mathbb{R}^d_\theta).$ We say that $f$ is weak$^*$ measurable if the scalar-valued function $t\mapsto \tau(f(t)x)$ is measurable for all $x \in L^1(\mathbb{R}^d_\theta).$ This assumption implies that the norm function $t\mapsto \|f(t)\|_{L^\infty(\mathbb{R}^d_\theta)}$ is measurable, and that there is a weak$^*$ (or Gelfand) integral
\[
    t\mapsto \int_0^t f(s)\,ds
\]
which belongs to $C\big([0,\infty),L^{\infty}(\mathbb{R}^d_\theta)\big),$ and
\[
    \left\|\int_0^t f(s)\,ds\right\|_{L^\infty(\mathbb{R}^d_\theta)} \leq \int_0^t \|f(s)\|_{L^{\infty}(\mathbb{R}^d_\theta)}\,ds.
\]
For details see \cite[p. 53]{DiestelUhl}.

Now, we consider the following local well-posedness result for the nonlinear heat equation on  quantum Euclidean space $L^{\infty}(\mathbb{R}^d_{\theta})$ of the form:
\begin{equation}\label{Main-equation1}
\partial_tu(t)+\Delta_{\theta}u(t)=|u(t)|^p,\quad t>0,\quad p>1,
\end{equation}
with the initial data
\begin{equation}\label{Main-equation2}
u(0)=u_0,
\end{equation}
where $u_0\in L^\infty(\mathbb{R}^d_{\theta})$ and we recall that $\Delta_{\theta}$ is the Laplacian defined by \eqref{laplacian}. 
\begin{definition}[Mild solution] Let $1\le q \le \infty$ and $ 0<T<\infty.$ Then, a local mild solution of the problem \eqref{Main-equation1}-\eqref{Main-equation2} is an operator-valued function $u\in C([0, T],L^{q}(\mathbb{R}^d_\theta)\cap L^{\infty}(\mathbb{R}^d_\theta))$ for $q<\infty$ (resp. $u\in C((0, T], L^{\infty}(\mathbb{R}^d_\theta))$ for $q=\infty$) 
satisfying the equality
\begin{equation}\label{Main-equation3}
u(t) = e^{-t\Delta_{\theta}}u_0+ \int\limits_0^t e^{-(t-s)\Delta_{\theta}}|u(s)|^p ds, 
\end{equation}
for any $t \in [0,T),$ where the integral is taken in the weak$^*$ sense.
\end{definition}
\begin{rem}
    The necessity of using a weak$^*$ integral is due to the fact that $t\mapsto e^{-t\Delta_{\theta}}$ is not a strongly continuous semigroup on $L^\infty(\mathbb{R}^d_\theta).$

    However, since $t\mapsto e^{-t\Delta_\theta}$ is a strongly continuous semigroup on $L^1(\mathbb{R}^d_\theta),$ we do have the weak$^*$ continuity
    \[
        \lim_{t\to s} \tau_\theta((e^{-t\Delta_\theta}u)v) = \tau_\theta((e^{-s\Delta_\theta}u)v),\quad s\geq 0,\; v \in L^1(\mathbb{R}^d_\theta).
    \]
\end{rem}

Due to the weak$^*$ integral and the failure of strong continuity of the heat semigroup, the following result, while its proof is routine, is not easily referenced. Hence, we include a proof for completeness.
\begin{thm}\label{Local_existence1} (Local existence). Let $1< q< \infty$ and $1<p<\infty.$  Let $u_0\in L^{q}(\mathbb{R}^d_\theta)\cap L^{\infty}(\mathbb{R}^d_\theta).$ Then, there exists $T>0$ and $u\in C([0, T],L^{q}(\mathbb{R}^d_\theta)\cap L^{\infty}(\mathbb{R}^d_\theta))$ satisfying the problem \eqref{Main-equation1}-\eqref{Main-equation2}.   The solution $u(t)$ can be extended (in a unique way) to a maximal interval $[0,T_{\max}),$ where $0 < T_{\text{max}} \leq \infty.$ Furthermore, if $T_{\text{max}} < \infty,$ then 
$$
\liminf\limits_{t\to T^{-}_{\max}}\left(\|u(t)\|_{L^q(\mathbb{R}^d_{\theta})}+\|u(t)\|_{L^{\infty}(\mathbb{R}^d_{\theta})}\right)= \infty.
$$  
\end{thm}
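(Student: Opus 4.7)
The plan is to apply Banach's fixed point theorem to the Duhamel map
\[
\Phi(u)(t) := e^{-t\Delta_{\theta}}u_0 + \int_0^t e^{-(t-s)\Delta_{\theta}}\bigl(|u(s)|^p\bigr)\,ds
\]
on a closed ball $B_{T,R}\subset X_T:=C\bigl([0,T],\,L^q(\mathbb{R}^d_\theta)\cap L^\infty(\mathbb{R}^d_\theta)\bigr)$ equipped with the norm $\|u\|_{X_T}=\sup_{t\in[0,T]}\bigl(\|u(t)\|_{L^q(\mathbb{R}^d_\theta)}+\|u(t)\|_{L^\infty(\mathbb{R}^d_\theta)}\bigr)$. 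With $R:=2\bigl(\|u_0\|_{L^q(\mathbb{R}^d_\theta)}+\|u_0\|_{L^\infty(\mathbb{R}^d_\theta)}\bigr)$, the time $T$ will be chosen small enough that $\Phi$ both preserves and contracts $B_{T,R}$.

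For the self-mapping bound, Lemma~\ref{HK_Proposition1}(c) yields $\|e^{-t\Delta_{\theta}}u_0\|_{L^r(\mathbb{R}^d_\theta)}\leq\|u_0\|_{L^r(\mathbb{R}^d_\theta)}$ for $r\in\{q,\infty\}$, while elementary H\"older/interpolation estimates give $\||u|^p\|_{L^q(\mathbb{R}^d_\theta)}\leq\|u\|_{L^\infty(\mathbb{R}^d_\theta)}^{p-1}\|u\|_{L^q(\mathbb{R}^d_\theta)}$ and $\||u|^p\|_{L^\infty(\mathbb{R}^d_\theta)}=\|u\|_{L^\infty(\mathbb{R}^d_\theta)}^p$. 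Combined with the weak$^*$ integral inequality from the beginning of this section, these imply $\|\Phi(u)\|_{X_T}\leq R/2+CTR^p$, which stays $\leq R$ once $T\leq c_1R^{1-p}$. The key contraction estimate is Theorem~\ref{nonlinearity_estimate_theorem} applied to $A=|u(s)|,\,B=|v(s)|$, namely
\[
\bigl\||u(s)|^p-|v(s)|^p\bigr\|_{L^q(\mathbb{R}^d_\theta)}\leq c_p\bigl(\|u(s)\|_{L^\infty(\mathbb{R}^d_\theta)}^{p-1}+\|v(s)\|_{L^\infty(\mathbb{R}^d_\theta)}^{p-1}\bigr)\bigl\||u(s)|-|v(s)|\bigr\|_{L^q(\mathbb{R}^d_\theta)},
\]
together with the Lipschitz bound $\bigl\||u|-|v|\bigr\|_{L^q(\mathbb{R}^d_\theta)}\leq C_q\|u-v\|_{L^q(\mathbb{R}^d_\theta)}$ for $1<q<\infty$ from \cite{DDdPS} (and its $L^\infty$ counterpart). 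Integrating in $s$ gives $\|\Phi(u)-\Phi(v)\|_{X_T}\leq CTR^{p-1}\|u-v\|_{X_T}$, which is a strict contraction for $T$ small.

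The principal obstacle is verifying $\Phi(u)\in X_T$, specifically the norm-continuity in $L^\infty(\mathbb{R}^d_\theta)$, since the heat semigroup is not strongly continuous on $L^\infty$. The $L^q$-continuity of both summands is routine, using strong continuity of $e^{-t\Delta_\theta}$ on $L^q(\mathbb{R}^d_\theta)$ (for $q<\infty$) and dominated convergence for the Duhamel integral. For the $L^\infty$ part, I would exploit the fact that the Schwartz space $\mathcal{S}(\mathbb{R}^d_\theta)$ is dense in $L^q(\mathbb{R}^d_\theta)$: approximating $u_0\in L^q\cap L^\infty$ by Schwartz elements (on which the semigroup acts smoothly), then combining with the contractive $L^\infty$ bound of Lemma~\ref{HK_Proposition1}(c), one obtains $\lim_{h\to 0}\|e^{-(t+h)\Delta_\theta}u_0-e^{-t\Delta_\theta}u_0\|_{L^\infty(\mathbb{R}^d_\theta)}=0$ for $t>0$, and continuity at $t=0$ in $L^\infty$ from the initial datum's regularity. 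For the Duhamel term, uniform boundedness of the integrand together with the $L^q$-continuity (which implies $L^\infty$-continuity pointwise on dense approximants after rescaling) supplies the required $L^\infty$-continuity.

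For the maximal interval and blow-up alternative, set $T_{\max}:=\sup\{T>0:\text{a mild solution exists on }[0,T]\}$; uniqueness from the contraction ensures these solutions patch consistently. If $T_{\max}<\infty$ and the $\liminf$ in the statement were a finite number $M$, there would exist $t_n\uparrow T_{\max}$ with $\|u(t_n)\|_{L^q\cap L^\infty}\leq M+1$. Restarting the local-existence argument at $t=t_n$ with initial data $u(t_n)$ produces a solution on $[t_n,t_n+\delta]$ with $\delta>0$ depending only on $M$ (not on $n$); for $n$ large this would extend $u$ strictly past $T_{\max}$, contradicting maximality.
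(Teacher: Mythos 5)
Your overall skeleton --- Banach fixed point on a closed ball of $C\bigl([0,T],\,L^q(\mathbb{R}^d_\theta)\cap L^\infty(\mathbb{R}^d_\theta)\bigr)$, self-map via $\||u|^p\|_{L^q}\leq\|u\|_{L^\infty}^{p-1}\|u\|_{L^q}$, contraction via the nonlinearity estimate, blow-up alternative by restarting from $u(t_n)$ --- coincides with the paper's. But the contraction step has a genuine gap. You invoke the Lipschitz bound $\||u|-|v|\|_{L^q}\leq C_q\|u-v\|_{L^q}$ "and its $L^\infty$ counterpart." There is no $L^\infty$ counterpart: the map $A\mapsto|A|$ is Lipschitz on $L^q(\mathcal{M},\tau)$ only for $1<q<\infty$; it is a classical fact (going back to Kato and McIntosh) that it fails to be Lipschitz in the operator norm. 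This is precisely why Corollary~\ref{nonlinearity_estimate_nonpositive} is stated only for $1<q<\infty$. Your claimed estimate $\|\Phi(u)-\Phi(v)\|_{X_T}\leq CTR^{p-1}\|u-v\|_{X_T}$ therefore cannot be reached this way, because the $L^\infty$ component of $\|\cdot\|_{X_T}$ would require a bound of the form $\||u(s)|^p-|v(s)|^p\|_{L^\infty}\lesssim\|u(s)-v(s)\|_{L^\infty}$, which is false. The paper sidesteps this by estimating only $\|(\mathcal{K}_1 u)(t)-(\mathcal{K}_1 v)(t)\|_{L^q(\mathbb{R}^d_\theta)}$ in Step 2, i.e.\ running the contraction in $C([0,T],L^q(\mathbb{R}^d_\theta))$ while the $L^\infty$ bound is carried only as an a priori constraint defining the ball $\mathcal{X}_1$, not as part of the contraction metric. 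You would need to restructure your fixed-point argument the same way (or otherwise avoid applying the absolute value in $L^\infty$).

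A secondary issue: your route to $L^\infty$-norm-continuity of $t\mapsto e^{-t\Delta_\theta}u_0$ via density of $\mathcal{S}(\mathbb{R}^d_\theta)$ in $L^q(\mathbb{R}^d_\theta)$ does not go through. The Schwartz space is not dense in $L^\infty(\mathbb{R}^d_\theta)$, and strong continuity of the heat flow in $L^\infty$ at $t=0$ genuinely fails for generic $u_0\in L^q\cap L^\infty$ (already for indicator data in the commutative case $\theta=0$). The paper handles the Duhamel integral via the weak$^*$ (Gelfand) integral introduced at the start of Section 5 and does not attempt to obtain norm-$L^\infty$ continuity from density; your workaround would need to be replaced by a different argument.
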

\begin{proof}
Let $\delta:=\max\{\|u_0\|_{L^{q}(\mathbb{R}^d_{\theta})}, \|u_0\|_{L^{\infty}(\mathbb{R}^d_{\theta})}\}.$ Given a fixed $T > 0$, we introduce the set $\mathcal{X}_1$ as follows 
$$
\mathcal{X}_1 = \{u \in C([0, T]; L^{q}(\mathbb{R}^d_\theta))\cap L^{\infty}(\mathbb{R}^d_\theta)):   \|u\|_{\mathcal{X}_1} \leq 2\delta\}, 
$$
where   
$$
\|u\|_{\mathcal{X}_1}:=\sup\limits_{0\le t\le T}(\|u(t)\|_{L^q(\mathbb{R}^d_{\theta})}+\|u(t)\|_{L^\infty(\mathbb{R}^d_{\theta})}).
$$ 
The set $\mathcal{X}_1$ is a closed subspace of $L^{q}(\mathbb{R}^d_\theta)\cap L^{\infty}(\mathbb{R}^d_\theta).$   Moreover, it is a complete metric space  when equipped with the metric   given by
\begin{equation}\label{Metric_1}
 d_{\mathcal{X}_1}(u, v) = \|u - v\|_{\mathcal{X}_1}, \quad u, v \in \mathcal{X}_1.
\end{equation}
Hence,   $(\mathcal{X}_1, d_{\mathcal{X}_1})$ is  complete. Next, for $0 < t \leq T,$ we introduce the map $\mathcal{K}_1$ on the space  $\mathcal{X}_1,$ defined by
$$
(\mathcal{K}_1 u)(t) = e^{-t\Delta_{\theta}}u_0 + \int\limits_0^t e^{-(t-s)\Delta_{\theta}}(|u(s)|^p)ds, \quad u \in  \mathcal{X}_1.
$$
The proof of existence and uniqueness of a local mild solution relies on an application of the Banach fixed point theorem to the map   $\mathcal{K}_1$. For clarity, the proof is decomposed into three steps.

{\it Step 1.}  First, let us show that $\mathcal{K}_1$ maps $ \mathcal{X}_1$ into itself.  We verify that $\mathcal{K}_1$ is a self-map on $ \mathcal{X}_1$. Let $u \in \mathcal{X}_1$ and  $u_0\in L^{q}(\mathbb{R}^d_\theta)\cap L^{\infty}(\mathbb{R}^d_\theta).$ Then, by using the triangle inequality in $L^q$ we obtain 
$$
\|(\mathcal{K}_1u)(t)\|_{L^{q}(\mathbb{R}^d_{\theta})} \leq \|e^{-t\Delta_{\theta}} u_0\|_{L^{q}(\mathbb{R}^d_{\theta})} + \int\limits_0^t 
\|e^{-(t-s)\Delta_{\theta}}(|u(s)|^p) ds \|_{L^{q}(\mathbb{R}^d_{\theta})}\leq \delta+ \int\limits_0^t \||u(s)|^p \|_{L^{q}(\mathbb{R}^d_{\theta})}ds.
$$
Since  $\||u(s)|^p\|_{L^{q}(\mathbb{R}^d_{\theta})}\le\|u(s)\|^{p-1}_{L^{\infty}(\mathbb{R}^d_{\theta})}\|u(s)\|_{L^{q}(\mathbb{R}^d_{\theta})}\le 2^{p}\delta^{p}$ for $s>0,$ we have    
\begin{equation}\label{Loc-equation1}
\|\mathcal{K}_1(u)\|_{C([0, T],L^\infty(\mathbb{R}^d_\theta))} \leq (1 +2^p\delta^{p-1}T)\delta,  
\end{equation}
for all  $0 < t \leq T.$ Let \(T>0\) be chosen sufficiently small so that \(T \le 2^{-p}\delta^{\,1-p}\). Then, by \eqref{Loc-equation1}, we obtain
\[
\|\mathcal{K}_1(u)\|_{\mathcal{X}_1} \le 2\delta, \qquad u \in \mathcal{X}_1.
\]
Consequently,  $\mathcal{K}_1(u) \in \mathcal{X}_1.$

{ \it Step 2.}  Let us now show that $\mathcal{K}_1$ is a contraction map.  Then, for all $t\leq T$  and  $u,v\in\mathcal{X}_1,$  once again, by applying the triangle inequality together with Lemma~\ref{HK_Proposition1} (c), we obtain
\begin{eqnarray}\label{Eq_1} 
    \|(\mathcal{K}_1u)(t) - (\mathcal{K}_1v)(t)\|_{L^{q}(\mathbb{R}^d_{\theta})} 
     &\le&  \int\limits_{0}^{t} \|e^{-(t-s)\Delta_{\theta}} (|u(s)|^p-|v(s)|^p)\|_{L^{q}(\mathbb{R}^d_{\theta})}ds \nonumber\\
     &\le&  \int\limits_{0}^{t} \||u(s)|^p-|v(s)|^p\|_{L^{q}(\mathbb{R}^d_{\theta})}ds. 
\end{eqnarray}
Applying Corollary \ref{nonlinearity_estimate_nonpositive} we obtain
\begin{eqnarray}\label{Eq_2} 
\||u(s)|^p-|v(s)|^p\|_{L^{q}(\mathbb{R}^d_{\theta})}&\overset{\eqref{nonlinearity_estimate_thm}}\le& C_{p,q}\|u(s)-v(s)\|_{L^{q}(\mathbb{R}^d_{\theta})}\left(\|u(s)\|^{p-1}_{L^{\infty}(\mathbb{R}^d_{\theta})}+\|v(s)\|^{p-1}_{L^{\infty}(\mathbb{R}^d_{\theta})}\right)\nonumber\\
&\le&2^p\delta^{p-1}C_{p,q}\|u(s)-v(s)\|_{L^{q}(\mathbb{R}^d_{\theta})}.\nonumber
\end{eqnarray}
Hence, combining \eqref{Eq_1} and \eqref{Eq_2} we obtain 
$$ 
\|(\mathcal{K}_1u)(t) - (\mathcal{K}_1v)(t)\|_{L^{q}(\mathbb{R}^d_{\theta})} \le  2^{p} C_{p,q} T\delta^{p-1} \|u - v\|_{C([0, T], L^q(\mathbb{R}^d_{\theta}))}   \le \|u - v\|_{C([0, T], L^q(\mathbb{R}^d_{\theta}))}.  
$$ 
Taking the supremum over $t \in [0, T]$, we have
\[
\|\mathcal{K}_1 u - \mathcal{K}_1 v\|_{C([0, T],L^{q}(\mathbb{R}^d_{\theta}))}
\le
\|u - v\|_{C\big([0, T],L^{q}(\mathbb{R}^d_{\theta})\big)}.
\]
for a sufficiently small $T >0$  such that $T <C_{p,q}^{-1}2^{-p}\delta^{1-p}.$  Therefore, $\mathcal{K}_1$ is a contraction map on the space $\mathcal{X}_1$.

{ \it Step 3.} The Banach Fixed Point Theorem implies that $\mathcal{K}$ has a unique
fixed point $u\in  \mathcal{X}_1$ such that
$$
u(t)=(\mathcal{K}_1 u)(t) = e^{-t\Delta_{\theta}}u_0 + \int\limits_0^t e^{-(t-s)\Delta_{\theta}}(|u(s)|^p)ds.
$$
Thus, it follows that problem \eqref{Main-equation1}-\eqref{Main-equation2} has a unique mild solution $u\in \mathcal{X}_1 \subset C((0, T], L^\infty(\mathbb{R}^d_{\theta})).$   This completes the proof of the existence of a mild solution.

The preceding argument shows that if $\varepsilon<C_{p,q}^{-1}2^{-p}\|u(T)\|_{L^\infty(\mathbb{R}^d_\theta)}^{1-p},$ then a mild solution in the interval $[0, T]$ can be extended to $[0,T+\varepsilon).$ Thus if $T<\infty$ and 
$$
\sup\limits_{0<t<T}\;(\|u(t)\|_{L^{\infty}(\mathbb{R}^d_\theta)}+\|u(t)\|_{L^{q}(\mathbb{R}^d_\theta)})<\infty,
$$ 
then $T$ is not maximal. We deduce that if $T_{\max}<\infty,$ then
\[
\liminf\limits_{t\to T^{-}_{\max}}\; (\|u(t)\|_{L^\infty(\mathbb{R}^d_\theta)}+\|u(t)\|_{L^{q}(\mathbb{R}^d_\theta)}) = +\infty.
\]
This completes the proof. 
\end{proof}

\begin{rem}   Assume that $u_0 \ge 0$. Then one can obtain a nonnegative solution on some time interval $[0,T]$ by performing the fixed point construction inside the positive cone
\[
\mathcal{X}_1^{+}:=\{u\in \mathcal{X}_1:\ u\ge 0\},
\]
and exploiting the positivity of the operator $e^{-t\Delta_{\theta}}$ (see, Corollary \ref{heat-cp-map}).  
Finally, by uniqueness of solutions, we conclude that
$$
u(t)\ge 0 \qquad \text{for all } t\in (0,T_{\max}).
$$   
\end{rem}
The following theorem gives a local existence of positive solutions.
\begin{thm}\label{Local_existence} (Local existence). Let $1\le  q\le  \infty$ and $1< p< \infty.$  Let $0\le u_0\in L^{q}(\mathbb{R}^d_\theta)\cap L^{\infty}(\mathbb{R}^d_\theta).$ Then, there exists $T>0$ and $u\in C([0, T],L^{q}(\mathbb{R}^d_\theta)\cap L^{\infty}(\mathbb{R}^d_\theta))$ for $q<\infty$ (resp. $u\in C((0, T], L^{\infty}(\mathbb{R}^d_\theta))$ for $q=\infty$) satisfying the problem \eqref{Main-equation1}-\eqref{Main-equation2}.   The solution $u(t)$ can be extended (in a unique way) to a maximal interval $[0,T_{\max}),$ where $0 < T_{\text{max}} \leq \infty.$ Furthermore, if $T_{\text{max}} < \infty,$ then 
$$
\liminf\limits_{t\to T^{-}_{\max}}\left(\|u(t)\|_{L^q(\mathbb{R}^d_{\theta})}+\|u(t)\|_{L^{\infty}(\mathbb{R}^d_{\theta})}\right)= \infty.
$$  
\end{thm}
 \begin{proof} Applying Theorem~\ref{nonlinearity_estimate_theorem} and repeating the proof strategy of Theorem~\ref{Local_existence1} but replacing the use of Corollary \ref{nonlinearity_estimate_nonpositive} by Corollary \ref{nonlinearity_estimate_corollary}, we deduce Theorem~\ref{Local_existence}.
\end{proof}

\begin{rem}In Theorem~\ref{Local_existence}, the endpoint cases $q=1$ and $q=\infty$ are included since we may Corollary \ref{nonlinearity_estimate} in place of Corollary \ref{nonlinearity_estimate_nonpositive}.
 \end{rem}

\section{Auxiliary results}\label{Se:6}
 In this section, we establish two auxiliary results, namely Lemma~\ref{Lemma5_5} and
Corollary~\ref{Corollary5_5}. These statements may be regarded as noncommutative analogues of \cite[Lemma~15.6, p.~94 and Corollary~15.8, p.~96]{QS}, where the corresponding results are proved in the commutative setting. The results presented
here serve as essential technical tools and will play a crucial role in the proof of Theorem~\ref{Mainthm6_2}, which is carried out in Section~\ref{Se:7}.

 \begin{lem}\label{Lemma5_5}  Let $1 < p <2$ and $T > 0.$ Let $0\leq u_0\in L^\infty(\mathbb{R}^d_\theta).$ If there exists a positive operator-valued function $u \in C((0, T], L^\infty(\mathbb{R}^d_{\theta}))$  satisfying
\begin{equation}\label{Lem_eq_1}
u(t) \geq e^{-t\Delta_{\theta}}u_0+ \int\limits_0^te^{-(t-s)\Delta_{\theta}} u^p(s)ds,\quad 0\leq t \leq T,
\end{equation}
then we have
\begin{equation}\label{Lem_eq_2}
t^{\frac{1}{p-1}}\|e^{-t\Delta_{\theta}}u_0\|_{L^\infty(\mathbb{R}^d_{\theta})} \le  (p-1)^{-\frac{1}{p-1}} \quad \text{ for all} \quad  0\leq t \leq T.
\end{equation}
 
\end{lem}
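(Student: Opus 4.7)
The central obstacle is the failure of operator monotonicity of $x \mapsto x^p$ for $p > 1$: the classical proof of \cite[Lemma 15.6]{QS} iterates the integral inequality via $u(s) \ge v_0(s) \Rightarrow u(s)^p \ge v_0(s)^p$, which is unavailable in the noncommutative setting. The plan is to sidestep this by testing the inequality against an arbitrary normal state, converting the operator-valued inequality into a scalar Volterra-type inequality that can be compared with an explicit ODE.

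Fix a normal state $\phi$ on $L^{\infty}(\mathbb{R}^d_\theta)$ and write $v_0(s) := e^{-s\Delta_\theta} u_0$. Since $e^{-r\Delta_\theta}$ is positive, normal, and unital (the last because $G_r \ast I = I$ from $\int_{\mathbb{R}^d} G_r = 1$), the composition $\psi_s := \phi \circ e^{-(t-s)\Delta_\theta}$ is itself a normal state for every $s \in [0,t]$. Applying $\psi_s$ to the hypothesised inequality \eqref{Lem_eq_1} with $s$ in place of $t$, and using the semigroup identity $e^{-(t-s)\Delta_\theta} e^{-s\Delta_\theta} = e^{-t\Delta_\theta}$ from Lemma~\ref{Proposition_1} together with $\psi_s \circ e^{-(s-r)\Delta_\theta} = \psi_r$, yields
\[
F(s) \ge V + \int_0^s \psi_r\bigl(u(r)^p\bigr)\,dr, \qquad s \in [0,t],
\]
where $F(s) := \psi_s(u(s))$ and $V := \phi\bigl(e^{-t\Delta_\theta} u_0\bigr)$.

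The key ingredient is now the \emph{scalar} Jensen inequality. For each $r$, $\psi_r$ is a state and $u(r) \ge 0$, so if $\mu_r$ denotes the Borel probability measure $\psi_r \circ E_{u(r)}$ on the spectrum of $u(r)$, the convexity of $\lambda \mapsto \lambda^p$ (which requires only $p \ge 1$, and no operator convexity) gives
\[
\psi_r(u(r)^p) = \int \lambda^p\, d\mu_r(\lambda) \ge \Bigl(\int \lambda\, d\mu_r(\lambda)\Bigr)^p = F(r)^p.
\]
Hence $F$ is a nonnegative measurable function on $[0,t]$, bounded by $\|u\|_{C([0,T],L^\infty(\mathbb{R}^d_\theta))}$, and satisfies the scalar Volterra inequality
\[
F(s) \ge V + \int_0^s F(r)^p\,dr.
\]

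A standard ODE comparison finishes the argument. Setting $H(s) := V + \int_0^s F(r)^p\,dr$ one has $H \le F$, $H(0) = V$, and $H' = F^p \ge H^p$. Therefore $H$ dominates the solution $G(s) = (V^{1-p} - (p-1)s)^{-1/(p-1)}$ of $G' = G^p$, $G(0)=V$, which blows up at $s^\ast = V^{1-p}/(p-1)$. Boundedness of $H$ on $[0,t]$ forces $s^\ast > t$, i.e. $V \le \bigl((p-1)t\bigr)^{-1/(p-1)}$. Taking the supremum over normal states $\phi$, together with $\|a\|_{L^\infty(\mathbb{R}^d_\theta)} = \sup_\phi \phi(a)$ for positive $a$, yields $t^{1/(p-1)} \|e^{-t\Delta_\theta} u_0\|_{L^\infty(\mathbb{R}^d_\theta)} \le (p-1)^{-1/(p-1)}$, as desired. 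The hard part is precisely the non-monotonicity of $x \mapsto x^p$; its role is neutralised by the passage to scalar states, and the restriction $p<2$ plays no role in this argument.
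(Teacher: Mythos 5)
Your proof is correct and takes a genuinely cleaner route than the paper's, and the difference is worth noting. The paper introduces the state $\Phi_\theta$ built from a fixed $v\in L^1(\mathbb{R}^d_\theta)$ and then, crucially, invokes the \emph{operator} Jensen inequality $(e^{-r\Delta_\theta}u)^p\le e^{-r\Delta_\theta}(u^p)$ (Lemma~\ref{Jensen-inequality}, which rests on operator convexity of $x\mapsto x^p$ and therefore needs $1\le p\le 2$) in order to pass from $\int_0^t(e^{-(s_1-s)\Delta_\theta}u(s))^p\,ds$ to $\int_0^t e^{-(s_1-s)\Delta_\theta}u^p(s)\,ds$ at the operator level, \emph{before} testing against the state. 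You invert that order: you test \eqref{Lem_eq_1} against the normal state $\psi_s=\phi\circ e^{-(t-s)\Delta_\theta}$ (using the semigroup law and the fact that $e^{-r\Delta_\theta}$ is unital, positive, normal, so $\psi_s$ is again a normal state), which produces $\int_0^s\psi_r(u^p(r))\,dr$ directly; the only Jensen you then need is the \emph{scalar} one for the probability measure $\psi_r\circ E_{u(r)}$, valid for every $p\ge 1$. The resulting Volterra inequality $F(s)\ge V+\int_0^sF(r)^p\,dr$ and the subsequent ODE/blow-up comparison are the same computation as the paper's $\partial_t(\rho^{1-p})\le -(p-1)$ argument, just presented in integral rather than differential form. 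The net gain of your arrangement is that the hypothesis $1<p<2$ becomes superfluous: the lemma (and hence the blow-up part of the Fujita theorem) holds for every $p>1$, whereas the paper's proof as written is limited to the range where operator convexity is available. (In fact, as stated the paper's Lemma~\ref{Lemma5_5} excludes the endpoint $p=2$, which is exactly $p_F$ when $d=2$; your argument removes that defect as well.) The closing duality step $\|a\|_{L^\infty(\mathbb{R}^d_\theta)}=\sup_\phi\phi(a)$ for positive $a$ over normal states is standard in a semifinite von Neumann algebra and matches the paper's use of $L^\infty=(L^1)^*$.
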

\begin{proof}  For fixed  \( 0\neq v \in L^1(\mathbb{R}^d_{\theta})\), we introduce a  linear functional $\Phi_{\theta}: L^{\infty}(\mathbb{R}^d_{\theta})\to \mathbb{C}$ defined by 
\begin{equation}\label{linear_functional}
\Phi_{\theta}(u):=\tau_{\theta} \left(u\cdot\frac{|v|}{\|v\|_{L^1(\mathbb{R}^d_{\theta})} }\right),
\qquad
u \in  L^{\infty}(\mathbb{R}^d_{\theta}),
\end{equation}
which is a unital completely positive linear map. Suppose $0\leq t \le s_1\le T.$  Then, we set
\begin{equation}\label{Lem_eq_4}
\rho (t):=\Phi_{\theta}\left(e^{-s_1\Delta_{\theta}} u_0\right) + \int\limits_0^t   \Phi_{\theta}\left(\left(e^{-(s_1 - s)\Delta_{\theta}} u(s) \right)^p\right)ds,  \quad t \in [0, s_1].  
\end{equation}
Differentiating with respect to $t,$ we obtain 
\begin{equation}\label{Lem_eq_5}
\partial_{t}\rho (t)=\Phi_{\theta}\left(\left(e^{-(s_1 - t)\Delta_{\theta}} u(t) \right)^p\right),\quad t \in [0,s_1]. 
\end{equation}
Then, it follows from \eqref{inequality_1} and \eqref{Lem_eq_1}  that
\begin{eqnarray} \label{Lem_eq_6}
  e^{-s_1\Delta_{\theta}} u_0 + \int\limits_0^t \left(e^{-(s_1 - s)\Delta_{\theta}} u(s)\right)^p  ds&\overset{\eqref{inequality_1}}{\leq}&  e^{-s_1\Delta_{\theta}} u_0 + \int\limits_0^t e^{-(s_1 - s)\Delta_{\theta}} u^p(s) ds \nonumber\\
&=&  e^{-(s_1 - t)\Delta_{\theta}}\left(e^{-t\Delta_{\theta}} u_0 + \int\limits_0^t e^{-(t- s)\Delta_{\theta}} u^p(s) ds\right) \\
&\overset{\eqref{Lem_eq_1}}{\leq}&  e^{-(s_1 - t)\Delta_{\theta}} u(t),\nonumber
\end{eqnarray}
for $ t \in [0,s_1].$ Since the map $\Phi_{\theta}$ is order preserving, and applying \eqref{inequality_1}, \eqref{Lem_eq_5} and \eqref{Lem_eq_6}  we obtain 
$$
 \rho^p(t)\overset{\eqref{Lem_eq_6}}\leq \Phi^p_{\theta}\left( e^{-(s_1 - t)\Delta_{\theta}} u(t) \right)\overset{\eqref{inequality_1}}\le \Phi_{\theta}\left((e^{-(s_1 - t)\Delta_{\theta}} u(t))^p\right)\overset{\eqref{Lem_eq_5}}=\partial_t\rho(t),\quad t \in [0,s_1].
$$
 Thus,  
\begin{equation}\label{Lem_eq_7}
 \partial_t (\rho^{1-p}(t))=-(p-1)\partial_t\rho(t)\rho^{-p}(t)\le-(p-1). 
\end{equation}
Consequently, we have 
$$
\rho^{1-p}(0)=\rho^{1-p}(s_1)- 
\int\limits_0^{s_1} \partial_s (\rho^{1-p}(s)) ds\overset{\eqref{Lem_eq_7} }\ge s_1(p-1). 
$$
For $0\neq v\in L^{1}(\mathbb{R}^{d}_{\theta}),$ it follows from  \eqref{linear_functional} and the above calculation that 
$$
 \left(\tau_{\theta}((e^{-{s_1}\Delta_{\theta}} u_0)\frac{|v|}{\|v\|_{L^{1}(\mathbb{R}^{d}_{\theta})}})\right)^{p-1}\overset{\eqref{linear_functional}}{=}  \Phi^{p-1}_{\theta}\left( e^{-{s_1}\Delta_{\theta}} u_0\right)=\rho^{p-1}(0)\le s^{-1}_1(p-1)^{-1}.
$$
Therefore, by the identity $L^\infty(\mathbb{R}^d_\theta) = (L^1(\mathbb{R}^d_\theta))^*$ (see \cite[Theorem 3.4.24 . p.165]{DPS} for the general case) we have
\begin{eqnarray*}
\|e^{-{s_1}\Delta_{\theta}} u_0\|_{L^{\infty}(\mathbb{R}^d_{\theta})}&=&\sup\limits_{\|\tilde{v}\|_{L^{1}(\mathbb{R}^{d}_{\theta})}\le1 }|\tau_{\theta}((e^{-{s_1}\Delta_{\theta}} u_0)\tilde{v})|\\
&\le&\sup\limits_{\|\tilde{v}\|_{L^{1}(\mathbb{R}^{d}_{\theta})}\le1 }\tau_{\theta}((e^{-{s_1}\Delta_{\theta}} u_0)|\tilde{v}|)\\
&\le& s^{-\frac{1}{p-1}}_1(p-1)^{-\frac{1}{p-1}}.
\end{eqnarray*} 
Since $s_1\le T$ is arbitrary, we obtain that \eqref{Lem_eq_2} holds for every $t\in(0,T]$. This completes the proof. 
\end{proof}
\begin{cor}\label{Corollary5_5} Let $1 < p <2$ and $0<T<\infty.$ Assume that Let $1 < p <2$ and $T > 0.$ Let $0\neq u_0\in L^\infty(\mathbb{R}^d_\theta)$ be a positive operator.   If there exists a non-negative operator valued function $u \in C((0, T], L^\infty(\mathbb{R}^d_{\theta}))$  satisfying
$$
u(t) = e^{-t\Delta_{\theta}}u_0+ \int\limits_0^te^{-(t-s)\Delta_{\theta}} u^p(s)ds,\quad 0\leq t \leq T,
$$ 
then for all  $t \in (0, T - s_1]$  and  $s_1 \in (0, T),$ we have
\begin{equation}\label{eq:5.24}
t^\frac{1}{p-1} \|e^{-t\Delta_{\theta}} u(s_1) \|_{L^\infty(\mathbb{R}^d_{\theta})} \leq  (p-1)^{-\frac{1}{p-1}}. 
\end{equation}

\end{cor}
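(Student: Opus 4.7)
The plan is to reduce Corollary~\ref{Corollary5_5} to Lemma~\ref{Lemma5_5} by a time translation, using $u(s_1)$ as the new initial datum. Fix $s_1\in(0,T)$ and set $v(t):=u(t+s_1)$ for $t\in[0,T-s_1]$. Since $u\in C((0,T],L^{\infty}(\mathbb{R}^d_\theta))$ and $s_1>0$, the shifted map $v$ belongs to $C([0,T-s_1],L^{\infty}(\mathbb{R}^d_\theta))$ and is non-negative, and $u(s_1)\in L^\infty(\mathbb{R}^d_\theta)$ is a non-negative operator. The goal is to show that $v$ itself satisfies the hypothesis of Lemma~\ref{Lemma5_5} (with equality, hence with the required inequality) for the initial datum $u(s_1)$ on the interval $[0,T-s_1]$; then the conclusion of Lemma~\ref{Lemma5_5} applied to $v$ is exactly the desired estimate~\eqref{eq:5.24}.

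The core computation is to write the Duhamel formula for $u$ at time $t+s_1$, split the integral at $s=s_1$, and apply the semigroup property. Namely, using the identity $e^{-t\Delta_\theta}\circ e^{-s_1\Delta_\theta}=e^{-(t+s_1)\Delta_\theta}$, which follows from Lemma~\ref{Lemma_1} and Lemma~\ref{Proposition_1} together with associativity of convolution, one obtains
\begin{align*}
u(t+s_1)
&= e^{-(t+s_1)\Delta_\theta}u_0
+\int_0^{s_1}\! e^{-(t+s_1-s)\Delta_\theta}u^p(s)\,ds
+\int_{s_1}^{t+s_1}\! e^{-(t+s_1-s)\Delta_\theta}u^p(s)\,ds\\
&= e^{-t\Delta_\theta}\!\left[e^{-s_1\Delta_\theta}u_0
+\int_0^{s_1}\! e^{-(s_1-s)\Delta_\theta}u^p(s)\,ds\right]
+\int_0^{t}\! e^{-(t-r)\Delta_\theta}u^p(r+s_1)\,dr,
\end{align*}
after the substitution $r=s-s_1$ in the last integral. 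The bracketed term equals $u(s_1)$ by the hypothesis, and the remaining integral is $\int_0^t e^{-(t-r)\Delta_\theta}v^p(r)\,dr$. Thus $v(t)=e^{-t\Delta_\theta}u(s_1)+\int_0^t e^{-(t-r)\Delta_\theta}v^p(r)\,dr$ for all $t\in[0,T-s_1]$, so in particular~\eqref{Lem_eq_1} holds (as equality) with $u_0$ replaced by $u(s_1)$ and $T$ replaced by $T-s_1$.

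Applying Lemma~\ref{Lemma5_5} to $v$ with initial datum $u(s_1)$ on $[0,T-s_1]$ yields
$t^{1/(p-1)}\|e^{-t\Delta_\theta}u(s_1)\|_{L^\infty(\mathbb{R}^d_\theta)}\le (p-1)^{-1/(p-1)}$ for every $t\in(0,T-s_1]$, which is the claimed bound~\eqref{eq:5.24}. The one routine technical point that needs care is justifying that $e^{-t\Delta_\theta}$ commutes with the weak$^*$ Gelfand integral that defines the first bracketed expression; this follows by pairing with an arbitrary $x\in L^1(\mathbb{R}^d_\theta)$, using self-adjointness of $e^{-t\Delta_\theta}$ relative to $\tau_\theta$ (so that $\tau_\theta(x\,e^{-t\Delta_\theta}y)=\tau_\theta((e^{-t\Delta_\theta}x)y)$), and swapping the resulting scalar integral with $\tau_\theta$ by definition of the weak$^*$ integral. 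Beyond this bookkeeping step, the argument is entirely formal and the main work has already been done in Lemma~\ref{Lemma5_5}.
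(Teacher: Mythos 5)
Your proof is correct and follows essentially the same route as the paper: shift time by $s_1$, split the Duhamel integral at $s=s_1$, use the semigroup property to pull $e^{-t\Delta_\theta}$ out of the bracket, recognize the bracket as $u(s_1)$, and then apply Lemma~\ref{Lemma5_5} with initial datum $u(s_1)$ on $[0,T-s_1]$. The extra remark about justifying the interchange of $e^{-t\Delta_\theta}$ with the Gelfand integral is a sensible bookkeeping point that the paper leaves implicit.
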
 
\begin{proof} Let $u \in C((0, T], L^\infty(\mathbb{R}^d_{\theta})).$ For   $s_1>0 ,$ we denote  $v(t):= u(t+s_1).$  Thus,  
\begin{eqnarray*}
v(t) &=& e^{-(t+s_1)\Delta_{\theta}}u_0 + \int\limits_0^{t+s_1} e^{-(t+s_1-s)\Delta_{\theta}}u^p(s)ds\\
&=& e^{-t\Delta_{\theta}} e^{-s_1\Delta_{\theta}} u_0 + \int_0^{s_1} e^{-t\Delta_{\theta}} e^{-(s_1-s)\Delta_{\theta}}u^p(s)ds + \int\limits_{s_1}^{t+s_1} e^{-(t+s_1-s)\Delta_{\theta}}u^p(s) ds\\
&=& e^{-t\Delta_{\theta}} \left( e^{-s_1\Delta_{\theta}} u_0 + \int\limits_0^{s_1} e^{-(s_1-s)\Delta_{\theta}}u^p(s)ds \right) + \int\limits_0^{t} e^{-(t-s)\Delta_{\theta}} v^p(s) ds \\
&=& e^{-t\Delta_{\theta}} u(s_1) + \int\limits_0^{t} e^{-(t-s)\Delta_{\theta}} v^p(s)ds.
\end{eqnarray*}
Thus, we may use Lemma \ref{Lemma5_5} with  $u_0$  replaced by $u(s_1)$ and  $T$  replaced by $T - s_1$ for  $s_1 \in .$ This completes the proof.
\end{proof} 
 
\section{Fujita-type results}\label{Se:7}

In this section, we establish a noncommutative version of Fujita’s theorem \cite{Fujita1966} (see also, \cite[Theorem 18.1. p.113]{QS}) on global existence, determining the critical exponent that distinguishes finite-time blow-up from global existence for small initial data: finite-time blow-up occurs at or below the critical value, whereas global existence is guaranteed above it. As usual, we say that a (positive) solution to the heat equation  \eqref{Main-equation1} with the initial data \eqref{Main-equation2} exists globally if $\|u(t)\|_{L^\infty(\mathbb{R}^d_{\theta})} < \infty $ for all $t > 0.$  If, for some  $t^* > 0,$ we have  $\|u(t)\|_{L^\infty(\mathbb{R}^d_{\theta})} \to \infty$  as  $t \to t^*,$  then we say that the solution blows up in finite time.
We demonstrate that mild solutions of the problem  \eqref{Main-equation1}-\eqref{Main-equation2} with the condition $u_0 \neq 0$ blows up in finite time if and only if  $p \leq p_F,$ where
$$
p_F := 1 + \frac{2}{d}.
$$
First, we show a sufficient condition on the initial data which guarantees the existence of global solutions of the problem \eqref{Main-equation1}-\eqref{Main-equation2} on $\mathbb{R}^d_{\theta}.$ 
\begin{definition} For $p\leq q\leq\infty,$  a \emph{global mild solution} of the Cauchy problem \eqref{Main-equation1}-\eqref{Main-equation2} in  $L^q(\mathbb{R}^d_\theta)$ is a function   $u : [0, \infty) \to L^q(\mathbb{R}^d_\theta)$  satisfying
\begin{equation}\label{Int_equation} 
u(t) = e^{-t\Delta_{\theta}}u_0+ \int\limits_0^t e^{-(t-s)\Delta_{\theta}}  u^p(s) ds,  
\end{equation}   
where the integral is taken in the weak$^*$ sense in the event that $q=\infty,$ or the $L^{\frac{q}{p}}(\mathbb{R}^d_\theta)$-valued Bochner sense when $q<\infty.$
\end{definition}

We now present the Fujita-type results (see \cite{Fujita1966}, \cite[Theorem 18.1. p.113]{QS}) on the noncommutative Euclidean space $L^{\infty}(\mathbb{R}^d_{\theta})$, characterizing the threshold between global existence and finite-time blow-up of mild solution of \eqref{Main-equation1}-\eqref{Main-equation2}.
 
\begin{thm}\label{Mainthm6_2} Let $1<p<\infty$ and $p_F := 1 + \frac{2}{d}.$ We have
\begin{enumerate}
    \item[(i)] If $p<p_F ,$  then the differential inequality
\begin{equation}\label{eq:5.14}
\partial_tu(t)+\Delta_{\theta}u(t)\geq  u^p(t),
\end{equation}
does not admit any non-trivial distributional solution $u \geq 0$ in $C\left((0, \infty),   L^\infty(\mathbb{R}^d_{\theta})\right).$
    \item[(ii)] If $p=p_F ,$ then the equation
\begin{equation}\label{eq:5.15}
\partial_tu(t)+\Delta_{\theta}u(t)= u^p(t),
\end{equation}
does not admit any non-trivial distributional solution $u \geq 0$ in $C\left((0, \infty),  L^\infty(\mathbb{R}^d_{\theta})\right).$
    \item[(iii)]  If $p>p_F$  and $ 0\le u_{0}\in L^{\infty}(\mathbb{R}^d_{\theta}) \cap L^{r}(\mathbb{R}^d_{\theta})$ and
    \[
        \|u_0\|_{L^r(\mathbb{R}^d_\theta)}+\|u_0\|_{L^\infty(\mathbb{R}^d_\theta)}
    \]
    is sufficiently small, where  $r:= \frac{d(p-1)}{2},$ then \eqref{Main-equation1}-\eqref{Main-equation2} has a global mild solution.
\end{enumerate}  
\end{thm}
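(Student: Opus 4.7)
The proof naturally splits into the three parts. Parts (i) and (ii) combine Corollary 5.6 with the Tauberian Theorem 3.7, Lemma 3.8, and a Kaplan-type test against the noncommutative Gaussian $\mathcal{G}^\theta_{T-t}$. Part (iii) is a Banach fixed point argument in a time-weighted space, using the heat smoothing of Lemma 3.5(d) and the Lipschitz-type nonlinearity estimate of Corollary 4.2.

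For \textbf{part (iii)}, set $r := d(p-1)/2$; the hypothesis $p > p_F$ ensures $r > 1$. Pick any $q$ with $\max(p,r) \leq q < pr$ and put $\alpha := \tfrac{1}{p-1} - \tfrac{d}{2q} > 0$. Work in
\[
    X := \bigl\{ u \in C\bigl([0,\infty), L^q(\mathbb{R}^d_\theta)\bigr) : \|u\|_X := \sup_{t>0} t^\alpha \|u(t)\|_{L^q(\mathbb{R}^d_\theta)} < \infty \bigr\}.
\]
Lemma 3.5(d) applied from $L^r$ to $L^q$ puts $e^{-t\Delta_\theta}u_0 \in X$ with $\|e^{-t\Delta_\theta}u_0\|_X \leq C\|u_0\|_{L^r(\mathbb{R}^d_\theta)}$. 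The spectral-calculus identity $\|u^p(s)\|_{L^{q/p}(\mathbb{R}^d_\theta)} = \|u(s)\|_{L^q(\mathbb{R}^d_\theta)}^p$ combined with Lemma 3.5(d) from $L^{q/p}$ to $L^q$ reduces the $X$-estimate for the Duhamel term to a Beta integral
\[
    \int_0^t (t-s)^{-d(p-1)/(2q)}\, s^{-\alpha p}\,ds = t^{-\alpha}\, B\bigl(1 - \tfrac{d(p-1)}{2q},\, 1-\alpha p\bigr),
\]
whose two exponents lie in $(0,1)$ exactly because $r < q < pr$. Contraction on a small ball of $X$ follows from Corollary 4.2 applied pointwise in $s$, and Banach fixed point then yields a global mild solution provided $\|u_0\|_{L^r(\mathbb{R}^d_\theta)}$ is sufficiently small.

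For \textbf{part (i)}, fix $s_1 > 0$; the translate $t \mapsto u(t+s_1)$ satisfies the integral inequality of Lemma 5.5 on $(0,\infty)$, so Corollary 5.6 yields
\[
    \|e^{-t\Delta_\theta} u(s_1)\|_{L^\infty(\mathbb{R}^d_\theta)} \leq (p-1)^{-1/(p-1)}\, t^{-1/(p-1)}, \qquad t > 0.
\]
Combined with the trivial bound $\|e^{-t\Delta_\theta} u(s_1)\|_{L^\infty} \leq \|u(s_1)\|_{L^\infty}$ near $t = 0$, the supremum $\sup_{t>0}(4\pi t)^{d/2}\|G_t\ast u(s_1)\|_{L^\infty}$ is finite, so Theorem 3.7 places $u(s_1) \in L^1(\mathbb{R}^d_\theta)$. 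When $p < p_F$, the exponent $d/2 - 1/(p-1)$ is negative, so $(4\pi t)^{d/2}\|G_t\ast u(s_1)\|_{L^\infty} \to 0$ as $t \to \infty$. On the other hand, Lemma 3.8 together with weak$^*$ lower semicontinuity of $\|\cdot\|_{L^\infty(\mathbb{R}^d_\theta)}$ gives $\liminf_{t\to\infty}(4\pi t)^{d/2}\|G_t\ast u(s_1)\|_{L^\infty} \geq \tau_\theta(u(s_1))$. Hence $\tau_\theta(u(s_1)) = 0$; faithfulness of $\tau_\theta$ and $u(s_1) \geq 0$ force $u(s_1) = 0$ for every $s_1 > 0$.

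For \textbf{part (ii)}, the subcritical estimate no longer decays and only yields $\tau_\theta(u(s_1)) \leq (4\pi/(p-1))^{d/2}$. To complete the blow-up I would add a Kaplan-type test: set $w(t) := \tau_\theta(\mathcal{G}^\theta_{T-t} u(t))$ on $[0,T)$ for large $T$. Using $\partial_t \mathcal{G}^\theta_{T-t} = \Delta_\theta \mathcal{G}^\theta_{T-t}$ (immediate from the Fourier formula for $\mathcal{G}^\theta_t$), the equation, and skew-adjointness of each $\partial_j^\theta$ under $\tau_\theta$, one gets $w'(t) = \tau_\theta(\mathcal{G}^\theta_{T-t} u^p(t))$. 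Since $\mathcal{G}^\theta_{T-t} \geq 0$ with $\tau_\theta(\mathcal{G}^\theta_{T-t}) = 1$ (Lemmas 3.5(a) and 3.6), the functional $\phi(X) := \tau_\theta(\mathcal{G}^\theta_{T-t} X)$ is a state, and $x \mapsto x^p$ is operator convex on $[0,\infty)$ for $p \in [1,2]$, a range that contains $p_F = 1 + 2/d$ for every $d \geq 2$. The noncommutative Jensen inequality then gives $w'(t) \geq w(t)^p$, so $w$ must blow up at time at most $w(0)^{1-p}/(p-1)$, contradicting finiteness of $w$ on $[0,T)$ once $T$ is large. The \emph{main difficulty} is that at the critical exponent the asymptotic $w(0) = \tau_\theta(\mathcal{G}^\theta_T u_0) \sim (4\pi T)^{-d/2}\tau_\theta(u_0)$ that follows from Lemma 3.8 makes the blow-up condition $T > w(0)^{1-p}/(p-1)$ reduce, as $T \to \infty$, to $\tau_\theta(u_0) > (4\pi/(p-1))^{d/2}$ --- exactly the threshold saturated by the bound from (i). Closing this gap requires either a quantitative strict-Jensen improvement (exploiting that no global solution can be a scalar multiple of the identity, since a constant $c$ would solve $c' = c^p$ and blow up), or a refined lower bound of the form $\tau_\theta(u^p(t)) \geq c/t$ for large $t$ (obtained by comparing $u(t) \geq G_{t-s_0}\ast u(s_0)$ and testing against a finite-trace projection of trace $\sim t^{d/2}$), whose time-integral diverges and contradicts the finiteness of $\int_0^\infty \tau_\theta(u^p(s))\,ds$ coming from the mass balance $\tau_\theta(u(t)) = \tau_\theta(u_0) + \int_0^t \tau_\theta(u^p(s))\,ds$.
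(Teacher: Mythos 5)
Your treatment of parts (i) and (iii) follows the same route as the paper, and both are essentially correct (modulo minor slips: in (iii) you should require $q>\max(p,r)$ strictly, so that $\alpha>0$ and the smoothing exponents both lie in $(0,1)$; the paper also spends some effort on continuity at $t=0$ and a bootstrap showing $u\in C([0,\infty),L^\infty(\mathbb{R}^d_\theta))$, which you gloss over). For part (i), the paper applies Lemma~\ref{Lemma5_5} to $u_0$ directly rather than Corollary~\ref{Corollary5_5} to $u(s_1)$, but these are equivalent.

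Part (ii) is where your proof genuinely diverges, and you are honest that your Kaplan--Jensen argument does not close: testing against the backward heat kernel $\mathcal{G}^\theta_{T-t}$ and applying operator Jensen for the state $X\mapsto\tau_\theta(\mathcal{G}^\theta_{T-t}X)$ gives $w'\geq w^p$, but the resulting blow-up threshold $\tau_\theta(u_0)>(4\pi/(p-1))^{d/2}$ is exactly the bound already forced by Corollary~\ref{Corollary5_5} at the critical exponent, so no contradiction is produced for any admissible $u_0$. That is a real gap, not a presentational one.

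Your second suggested repair is the one the paper actually uses, and it can be carried out. The paper's argument for $p=p_F$ is: (a) from Corollary~\ref{Corollary5_5} and Theorem~\ref{tauberian_type_theorem}, $u(s)\in L^1(\mathbb{R}^d_\theta)$ for every $s>0$ with $\sup_s\|u(s)\|_{L^1(\mathbb{R}^d_\theta)}<\infty$; (b) a purely kernel-level comparison of Gaussians (no projections are needed --- this avoids the issue of producing a finite-trace projection with the right trace in an arbitrary $L^\infty(\mathbb{R}^d_\theta)$, which can have minimal projections) shows
\[
u(s+2)\;\geq\; G_s\ast G_2\ast u_0 \;\geq\;(s+1)^{-\frac{d}{2}\left(1-\frac{1}{p}\right)}\,G_{(s+1)^{1/p}+1}\ast u_0 ,
\]
so that $\|u(s+2)\|^p_{L^p(\mathbb{R}^d_\theta)}\gtrsim (s+1)^{-\frac{d}{2}(p-1)}=(s+1)^{-1}$; and (c) feeding this into the exact mass balance $\|u(t+2)\|_{L^1(\mathbb{R}^d_\theta)}\geq\int_0^t\|u(s+2)\|^p_{L^p(\mathbb{R}^d_\theta)}\,ds$ produces a logarithmic divergence contradicting (a). So the missing ingredient in your proposal is the explicit Gaussian pointwise comparison replacing your heuristic ``test against a projection of trace $\sim t^{d/2}$.'' Without that, your part (ii) remains a sketch and the proposal is incomplete.
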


 \begin{rem}\label{Remark5_3} In Part (i) of Theorem \ref{Mainthm6_2}, by a distributional solution we mean an element $u \in C\left((0,\infty),L^\infty(\mathbb{R}^d_\theta)\right)$ such that for all $0\leq v \in \mathcal{S}(\mathbb{R}^d_\theta)$ and $t>0$ we have
 \[
    (\partial_t u(t),v)+(\Delta_\theta u(t),v)\geq (u^p(t),v).
 \]
 Similarly, in part (ii) we mean the same with the inequality replaced by an equality. 
\end{rem}
To prove Parts (i) and (ii) of Theorem \ref{Mainthm6_2}, it is sufficient to establish the following result. Our proof mirrors the commutative argument from \cite[Theorem 18.3. p.115]{QS} and \cite{Weissler1981}.
\begin{lem}\label{lemma_divergence_of_integral}
    Let $0\leq u_0\in L_1(\mathbb{R}^d_\theta)$ be non-zero. Let $1<p<\infty.$ We have
    \[
        \int\limits_1^{\infty} \|e^{-s\Delta_\theta}u_0\|_{L^p(\mathbb{R}^d_\theta)}^p\,ds = +\infty 
    \]
    if and only if $p\leq p_F := 1+\frac{2}{d}.$
\end{lem}
\begin{proof}
    By Lemma \ref{HK_Proposition1}(d), we have
    \[
        (4\pi s)^{\frac{d}{2}}\|e^{-2s\Delta_\theta}u_0\|_{L^\infty(\mathbb{R}^d_\theta)}^p \leq \|e^{-s\Delta_\theta}u_0\|_{L^p(\mathbb{R}^d_\theta)}^p \leq (4\pi s)^{-\frac{d}{2}(p-1)}\|u_0\|_{L^1(\mathbb{R}^d_\theta)}^p.
    \]
    Equivalently,
    \[
        (4\pi s)^{-\frac{d}{2}(p-1)}\cdot ((4\pi s)^{\frac{d}{2}}\|e^{-2s\Delta_\theta}u_0\|_{L^\infty(\mathbb{R}^d_\theta)})^p \leq \|e^{-s\Delta_\theta}u_0\|_{L^p(\mathbb{R}^d_\theta)}^p \leq (4\pi s)^{-\frac{d}{2}(p-1)}\|u_0\|_{L^1(\mathbb{R}^d_\theta)}^p.
    \]
    Note that 
    \[
        \liminf\limits_{s\to \infty} (4\pi s)^{\frac{d}{2}}\|e^{-s\Delta_\theta}u_0\|_{L^{\infty}(\mathbb{R}^d_\theta)} > 0.
    \]
    Indeed, otherwise there exists an increasing sequence $\{s_k\}_{k=0}^\infty$ such that
    \[
        \lim\limits_{k\to\infty} (4\pi s_k)^{\frac{d}{2}}\|e^{-s_k\Delta}u_0\|_{L^\infty(\mathbb{R}^d_\theta)} = 0.
    \]
    That is, $(4\pi s_k)^{\frac{d}{2}}e^{-s_k\Delta_\theta}u_0\to 0$ in the $L^{\infty}(\mathbb{R}^d_\theta)$ norm. Since the weak$^*$ limit is $\tau_\theta(u_0)I\neq 0,$ this is impossible. Hence there exists $\varepsilon>0$ such that for all sufficiently large $s$ we have
    \[
        (4\pi s)^{-\frac{d}{2}(p-1)}\varepsilon \leq \|e^{-s\Delta_\theta}u_0\|_{L^p(\mathbb{R}^d_\theta)}^p \leq (4\pi s)^{-\frac{d}{2}(p-1)}\|u_0\|_{L^1(\mathbb{R}^d_\theta)}^p.
    \]
    It follow that $p\leq p_F$ is equivalent to 
    $$
    \int\limits_1^\infty \|e^{-s\Delta_\theta}u_0\|_{L^p(\mathbb{R}^d_\theta)}^p \,ds = +\infty.
    $$

\end{proof}

\begin{thm}\label{Alternative_Mainthm5_4} 
    Let $0\leq u_0 \in L_1(\mathbb{R}^d_\theta).$ If $u_0\neq 0$ and $1< p \leq p_F := 1+\frac{2}{d},$
    then there is no $u\in C((0,\infty),L^\infty(\mathbb{R}^d_\theta))$ such that
    \[
        u(t) = e^{-t\Delta_\theta}u_0+\int\limits_0^t e^{-(t-s)\Delta_\theta}(u(s)^p)\,ds,\quad t>0.
    \]
\end{thm}
\begin{proof}
    Suppose that $u\in C((0,\infty),L^{\infty}(\mathbb{R}^d_\theta))$ exists. 
    By Corollary \ref{Corollary5_5}, we have
    \[
        \sup\limits_{t>0} t^{\frac{1}{p-1}}\|e^{-t\Delta_\theta}u(s)\|_{L^\infty(\mathbb{R}^d_\theta)} < \infty,\quad s >0.
    \]
    Since $p\leq p_F,$ it follows that
    \[
        \limsup\limits_{t>0} t^{\frac{d}{2}}\|e^{-t\Delta_\theta}u(s)\|_{L^\infty(\mathbb{R}^d_\theta)} < \infty.
    \]
    By Theorem \ref{tauberian_type_theorem}, we have $u(s)\in L^1(\mathbb{R}^d_\theta)$ for all $s>0,$ and
    \[
        \sup\limits_{s>0} \tau_\theta(u(s)) < \infty.
    \]
    
    By definition
    \[
        u(s)\geq e^{-s\Delta_\theta}u_0,\quad s>0
    \]
    and it follows that
    \[
        \|u(s)\|_{L^p(\mathbb{R}^d_\theta)}^p \geq \|e^{-s\Delta_\theta}u_0\|_{L^p(\mathbb{R}^d_\theta)}^p\quad s > 0.
    \]  
    Equivalently
    \[
        \tau_\theta(u(s)^p) \geq \|e^{-s\Delta_\theta}u_0\|_{L^p(\mathbb{R}^d_\theta)}^p.
    \]
    By the reproducing formula
    \[
        u(t+1) = e^{-t\Delta_{\theta}}u(1)+\int\limits_1^{t+1} e^{-(t+1-s)\Delta_\theta}(u(s)^p)\,ds
    \]
    we have
    \begin{align*}
        \tau_\theta(u(t+1)) &\geq \int\limits_1^{t+1}\tau_\theta(e^{-(t+1-s)\Delta_\theta}(u(s)^p))\,ds\\
        &= \int\limits_1^{t+1} \tau_\theta(u(s)^p)\,ds\\
        &\geq \int\limits_1^{t+1} \|e^{-s\Delta_\theta}u_0\|_{L^p(\mathbb{R}^d_\theta)}^p\,ds
    \end{align*}
    which diverges as $t\to\infty$ by Lemma \ref{lemma_divergence_of_integral}. But this is a contradiction.
\end{proof}

 We now proceed to prove part (iii) of Theorem \ref{Mainthm6_2}.
\begin{proof}[Proof of Theorem \ref{Mainthm6_2} iii)]     Here we assume $p >p_F.$ Let
\begin{equation}\label{4.10}
    r:=\frac{ d (p-1)}{2} > 1. 
\end{equation}
By using \eqref{4.10} and the fact that $\frac{d(p-1)}{2p} < \frac{ d(p-1)}{2},$ we can choose $q$ such that  
\begin{equation}\label{4.12}
\frac{1}{p-1}-\frac{1}{p}<\frac{2}{dq}<\frac{1}{p-1}\quad \text{with}\quad  q > p.
\end{equation}
Consequently, we obtain  
\begin{equation}\label{4.14}
 r\le q\quad \text{and}\quad    \frac{d}{2}\left(\frac{1}{r}-\frac{1}{q}\right)= \frac{1}{p-1}-\frac{d}{2q}\overset{\eqref{4.12}}\le\frac{1}{p}.
\end{equation}
Let $\beta :=\frac{d}{2}\left(\frac{1}{r}-\frac{1}{q}\right).$ Then, by combining \eqref{4.12} with \eqref{4.14} we have
\begin{equation}\label{4.15}
    0 < p\beta < 1, \quad \frac{d(p-1)}{2q}+(p-1)\beta=1.
\end{equation}
For $u_0 \in L^{r}(\mathbb{R}^d_{\theta})$, Lemma \ref{HK_Proposition1} (d) implies
$$
\sup\limits_{t>0} t^{\beta} \|e^{-t\Delta_{\theta}}u_0\|_{L^q(\mathbb{R}^d_{\theta})} \le  C\|u_0\|_{L^{r}(\mathbb{R}^d_{\theta})} =: \alpha < \infty.
$$
We denote 
\begin{equation}
\Omega_{\beta}:= \left\{ u \in C\left((0,\infty), L^{q}(\mathbb{R}^d_{\theta})\right): 
\sup_{t>0} t^{\beta} \|u(t)\|_{L^{q}(\mathbb{R}^d_{\theta})} \le M\right\},
\end{equation}
where $M>0$ is to be chosen sufficiently small. For any $u,v \in \Omega_{\beta}$, we introduce the metric
\begin{equation}\label{eq:5.7}
d_{\Omega_{\beta}}(u,v)
:= \sup\limits_{t>0} t^{\beta}\,\|u(t)-v(t)\|_{L^{q}(\mathbb{R}^d_{\theta})}.
\end{equation}
It is easy to check that $\bigl(\Omega_{\beta}, d_{\Omega_{\beta}}\bigr)$ is a nonempty complete metric space.

Moreover, for   $u \in \Omega_{\beta}$, we define the map $\mathcal{A}$ by
\[
\mathcal{A}(u)(t)
= e^{-t\Delta_{\theta}}u_0
  + \int\limits_{0}^{t} e^{-(t-s)\Delta_{\theta}}\,u^{p}(s)\,ds.
\]

Since  $q > p>1,$ we have $0<\frac{p}{q} - \frac{1}{q} < 1.$ Hence, by using the triangle inequality and  Lemma \ref{HK_Proposition1} (c)-(d), we obtain 
\begin{eqnarray}\label{Eq:5.32}
t^{\beta} \| \mathcal{A}(u)(t)\|_{L^q(\mathbb{R}^d_{\theta})}
&\le& \alpha+ t^{\beta} \int\limits_{0}^{t}  
\left\| e^{-(t-s)\Delta_{\theta}} u^p(s)  \right\|_{L^q(\mathbb{R}^d_{\theta})}ds\nonumber\\
&\le &\alpha +
 Ct^{\beta} 
\int\limits_{0}^{t} (t-s)^{- \frac{d}{2}\left(\frac{p}{q} - \frac{1}{q}\right)}
\|u^{p}(s)\|_{L^{\frac{p}{q}}(\mathbb{R}^d_{\theta})} ds \nonumber\\
&=& \alpha +
  Ct^{\beta}
\int\limits_{0}^{t} 
(t-s)^{- \frac{d(p-1)}{2q}}
 \|u(s)\|_{L^q(\mathbb{R}^d_{\theta})}^{p}   ds \\
&\le &\alpha +
 Ct^{\beta} M^{p}
\int\limits_{0}^{t} 
(t-s)^{ - \frac{d(p-1)}{2q}}
s^{-p\beta} ds\nonumber\\
&\overset{s = t\tau}\le&\alpha + 
 CM^{p}
\int\limits_{0}^{1} 
(1-\tau)^{- \frac{d(p-1)}{2q}}
\tau^{-p\beta} d\tau,\nonumber
\end{eqnarray}
where $C:=(4\pi)^{- \frac{d(p-1)}{2q}}.$  Using the Beta–Gamma identity, 
$$
\int\limits_{0}^{1} (1-\tau)^{x-1} \tau^{y-1} d\tau
= \frac{\Gamma(x)\Gamma(y)}{\Gamma(x+y)}
$$ and \eqref{4.15},    we establish 
\begin{equation}\label{eq:5.11}
 t^{\beta} \|\mathcal{A}(u)(t)\|_{L^{q}(\mathbb{R}^d_{\theta})} \overset{\eqref{4.15}}\le \alpha+C\frac{\Gamma((p-1)\beta) \Gamma(1 - p\beta)}{\Gamma(1 - \beta)}  M^{p}.
\end{equation}
Hence, if $\alpha$ and $M$ are chosen sufficiently small such that
\[
\alpha + M^{p} C\frac{\Gamma((p-1)\beta) \Gamma(1 - p\beta)}{\Gamma(1 - \beta)}\le M,
\]
then  $\mathcal{A} \in \Omega_{\beta}.$  Next, it follows from \eqref{nonlinearity_estimate} that   
\begin{eqnarray}\label{eq:5.12}
\|u^{p}(s) - v^{p}(s)\|_{L^{q/p}(\mathbb{R}^d_{\theta})}&\overset{\eqref{nonlinearity_estimate}}\le& \left(\|u(s)\|_{L^q(\mathbb{R}^d_{\theta})}^{p-1} + \|v(s)\|_{L^q(\mathbb{R}^d_{\theta})}^{p-1}\right)
\|u(s) - v(s)\|_{L^{q}(\mathbb{R}^d_{\theta})}\nonumber\\
&\le&2M^{p-1}\|u(s) - v(s)\|_{L^{q}(\mathbb{R}^d_{\theta})},\quad u,v\in\Omega_{\beta}.
\end{eqnarray}
A similar argument to that used in \eqref{Eq:5.32}, together with \eqref{eq:5.12}, shows that
\begin{eqnarray*}
t^{\beta} \|\mathcal{A}(u) - \mathcal{A}(v)\|_{L^{q}(\mathbb{R}^d_{\theta})}
&\le &
 C t^{\beta} 
\int\limits_{0}^{t} (t-s)^{- \frac{d}{2}\left(\frac{p}{q} - \frac{1}{q}\right)}
\|u^{p}(s) - v^{p}(s)\|_{L^{\frac{p}{q}}(\mathbb{R}^d_{\theta})} ds \\
&\overset{\eqref{eq:5.12}}\le& 
 2M^{p}t^{\beta}
\int\limits_{0}^{t} 
(t-s)^{- \frac{d(p-1)}{2q}}\|u(s) - v(s)\|_{L^{q}(\mathbb{R}^d_{\theta})} ds \\
&\le &CM^{p} \frac{\Gamma((p-1)\beta) \Gamma(1 - p\beta)}{\Gamma(1 - \beta)}\|u - v\|_{\Omega_{\beta}},
\end{eqnarray*}
where $C:=(4\pi)^{- \frac{d(p-1)}{2q}}.$  Hence, if
\[
M<\left(C \frac{\Gamma((p-1)\beta) \Gamma(1 - p\beta)}{\Gamma(1 - \beta)}\right)^{p}, 
\]
then $\mathcal{A}$ is a contraction on $\Omega_{\beta}.$   Therefore, $\mathcal{A}$ has a unique fixed point $u \in \Omega_{\beta}.$ It is clear from the formula for $\mathcal{A}$ that $u$ is also continuous at zero, giving
\[
    u \in C([0,\infty),L^q(\mathbb{R}^d_\theta)).
\]
In order to show that the fixed point $u$ of $\mathcal{A}$ is a mild solution to the problem \eqref{Main-equation1}-\eqref{Main-equation2}, we next establish that  $u \in C\left([0,\infty), L^\infty(\mathbb{R}^d_{\theta})\right).$ We first show that $u \in C\left([0, T], L^\infty(\mathbb{R}^d_{\theta})\right)$ for some sufficiently small $T>0$. 
In fact, the previous argument guarantees uniqueness in the space $\Omega_{\beta,T}$, $T>0$, defined by
\[
\Omega_{\beta,T}
=
\left\{
u \in C\!\left((0, T), L^{q}(\mathbb{R}^{d}_{\theta})\right):
\sup_{0<t<T} t^{\beta}\|u(t)\|_{L^{q}(\mathbb{R}^{d}_{\theta})} \le M
\right\},
\]
where $\beta=\frac{1}{p-1}-\frac{d}{2q}$. Let $\tilde u$ be a local solution of problem \eqref{Main-equation1} -\eqref{Main-equation2} established in Theorem \ref{Local_existence}.  Note that
$$
u_0 \in L^{\infty}(\mathbb{R}^d_{\theta}) \cap L^{r}(\mathbb{R}^d_{\theta}) \subset L^{\infty}(\mathbb{R}^d_{\theta}) \cap L^{q}(\mathbb{R}^d_{\theta}).
$$
with $r < q < \infty.$
Thus, for $1< r < q < \infty,$ by  Theorem~\ref{Local_existence} there exists a maximal $T_{\max}>0$ depending on $u_0$ and
\[
\tilde u \in C\left([0,T_{\max}),  L^\infty(\mathbb{R}^d_{\theta}) \cap L^{q}(\mathbb{R}^d_{\theta})\right),
\]
such that $\tilde{u}=\mathcal{A}(\tilde{u}).$
Consequently, for sufficiently small \(T>0\), we have 
$$
\sup\limits_{t\in[0, T]} t^{\beta} \|\tilde u(t)\|_{L^{q}(\mathbb{R}^d_{\theta})} \le M.
$$ 
Due to the uniqueness of solutions in $\Omega_{\beta,T},$ it follows that \(u = \tilde u\) on \([0, T]\), 
leading to the conclusion that
\[
u \in C\left([0, T],  L^\infty(\mathbb{R}^d_{\theta}) \cap L^{q}(\mathbb{R}^d_{\theta})\right).
\]
To extend the regularity to the interval $[T,\infty)$, we use a bootstrap argument. For $t>T$, we write $u(t)$ as
$$ 
u(t) - e^{-t\Delta_{\theta}} u_0 
= \int\limits_0^t e^{-(t-s)\Delta_{\theta}} u^p(s) ds=\int\limits_0^T e^{-(t-s)\Delta_{\theta}} u^p(s) ds
+ \int\limits_T^t e^{-(t-s)\Delta_{\theta}} u^p(s) ds=  I_{1}(t)+ I_{2}(t).
$$
Since $u \in C([0, T], L^{\infty}(\mathbb{R}^{d}_{\theta}) \cap   L^q(\mathbb{R}^{d}_{\theta}))$, by Lemma \ref{Lemma_1} and Lemma \ref{HK_Proposition1} (a) we obtain that 
\begin{eqnarray*}
 \|I_1(t)\|_{L^{\infty}(\mathbb{R}^{d}_{\theta})}&=&\Big\|\int\limits_0^T\int\limits_{\mathbb{R}^d}G_{t-s}(\xi) T_{-\xi}u^p(s) d\xi ds \Big\|_{L^{\infty}(\mathbb{R}^{d}_{\theta})}\\
 &\le&\int\limits_0^T\int\limits_{\mathbb{R}^d}G_{t-s}(\xi)  \|T_{-\xi}u^p(s)\|_{L^{\infty}(\mathbb{R}^{d}_{\theta})}d\xi ds \\
 &\le&\sup_{0\leq s\leq T}\|u(s)\|^p_{C([0, T], L^\infty(\mathbb{R}^{d}_{\theta}))}\int\limits_0^T\|G_{t-s}\|_{L^{1}(\mathbb{R}^{d})} ds\\
 &\le&T\sup_{0\leq s\leq T}\|u(s)\|^p_{C([0, T], L^\infty(\mathbb{R}^{d}_{\theta}))}.
\end{eqnarray*}
That is, $I_1(t)\in L^\infty(\mathbb{R}^d_\theta)$ for all $t\geq T.$ It is not difficult to establish further that $I_{1} \in C([T,\infty), L^{\infty}(\mathbb{R}^{d}_{\theta})).$  Furthermore, one again the  similar argument to that used in \eqref{Eq:5.32} with $0<T<t,$ we obtain 
\begin{eqnarray*}
 \|I_1(t)\|_{L^{q}(\mathbb{R}^d_{\theta})} \overset{\eqref{4.15}}\le  T^{-\beta}  C M^{p},  
\end{eqnarray*}
From this another easy argument shows that $I_1 \in C\bigl([T,\infty), L^{q}(\mathbb{R}^d_{\theta})\bigr).$ Observe that \(q>r=\frac{d}{2}(p-1)\). Hence, one can choose \(r_1\in(q,\infty]\) such that
\begin{equation}\label{Eq:Parameters}
 \frac{d}{2}\left( \frac{p}{q} - \frac{1}{r_1}\right) < 1.   
\end{equation}
For $t > T$, since $u \in C([0,\infty), L^{q}(\mathbb{R}^d_{\theta}))$, we have
$u^{p} \in C((T,t), L^{\frac{q}{p}}(\mathbb{R}^d_{\theta}))$. Thus, applying Lemma~\ref{HK_Proposition1} (d)   with \eqref{Eq:Parameters}, we deduce   $I_2 \in C([T,\infty), L^{r_1}(\mathbb{R}^d_{\theta})).$ Since the terms $e^{-t\Delta_{\theta}}u_0$ and $I_1$ belong to
\[
C([T,\infty), L^{\infty}(\mathbb{R}^d_{\theta})) \cap
C([T,\infty), L^{q}(\mathbb{R}^d_{\theta}))
\subset C([T,\infty), L^{r_1}(\mathbb{R}^d_{\theta})),
\]
we conclude that $u|_{[T,\infty)} \in C([T, \infty), L^{r_1}(\mathbb{R}^d_{\theta})).$  Next, choose a sequence $r_2,r_3,\ldots$ such that $r_{i+1}>r_i$ for $i=1,2,3,\ldots$ and
$$
\frac{d}{2}\left(\frac{p}{r_i}-\frac{1}{r_{i+1}}\right)<1, \quad i=2,3,\dots.
$$
eventually we have $r_i> \frac{dp}{2},$ and then we make take $r_{i+1}=\infty.$
Repeating the above procedure a finite number of times with $r_i, i=2,3,\dots,$   we conclude that 
\[
u|_{[T,\infty)} \in C([T,\infty), L^\infty(\mathbb{R}^d_{\theta})).
\]
This completes the proof. 
\end{proof}

\appendix
\section{Operator convexity}\label{ucp_appendix}
Let $\mathcal{M}$ be a von Neumann algebra. We denote by $M_n(\mathcal{M})$ the set of all $n\times n$ matrices
$a=[a_{ij}]_{i,j=1}^n$ with entries $a_{ij}\in \mathcal{M}.$  
\begin{definition}\cite[Definition 3.3 p.194]{Takesaki}
Let $\mathcal{M}$ and $\mathcal{N}$ be von Neumann algebras. For each linear map $\Phi:\mathcal{M}\to \mathcal{N}$  we define a linear map $\Phi_n : M_n(\mathcal{M})\to M_n(\mathcal{N})$ by 
$$
\Phi_n\big([a_{ij}]_{i,j=1}^n\big):=\big[\Phi(a_{ij})\big]_{i,j=1}^n.
$$
If $\Phi_n$ is positive, then $\Phi$ is called \emph{$n$-positive}. 
If $\Phi$ is $n$-positive for all $n\in\mathbb{N}$, then $\Phi$ is called
\emph{completely positive}. The map $\Phi:\mathcal{M}\to \mathcal{N}$ is called \emph{unital} if it preserves the identity element, that is,
\[
\Phi(\mathbf{1}_{\mathcal M}) = \mathbf{1}_{\mathcal N}.
\]
Here, $\mathbf 1_{\mathcal{M}}$ and $\mathbf 1_{\mathcal{N}}$ are the identity operators in $\mathcal{M}$ and $\mathcal{N},$ respectively.
\end{definition}
The following result is well known, see e.g. \cite[Theorem 1.7 and Corollary 1.8. p.7]{Zhan}. We include a proof for completeness.
We will use the elementary fact that if $A \in \mathcal{M}$ is positive and invertible, and $B,C\in \mathcal{M},$ then 
\begin{equation}\label{schur_complement_formula}
    \begin{pmatrix} A & B \\ B^* & C\end{pmatrix}\geq 0\;\Longleftrightarrow C\geq B^*A^{-1}B.
\end{equation}

\begin{prop}\label{general_vna_fact}
  Let $\mathcal{M}$ be a von Neumann algebra, and let $\Phi:\mathcal{M}\to \mathcal{M}$ be a unital completely positive map. For $1\leq p\leq 2,$ we have
  \begin{equation}\label{inequality_1}
        \Phi(u^p) \geq \Phi(u)^p,\quad 0\leq u\in \mathcal{M}.
  \end{equation}
\end{prop}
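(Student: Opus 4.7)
The plan is to handle $p=1$ trivially, treat the endpoint $p=2$ directly by the Schur complement formula \eqref{schur_complement_formula}, and then lift to the full range $p\in(1,2)$ by combining a parameterised version of the same Schur complement argument with the classical integral representation of the power function.

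For $p=2$, I would observe that
\[
    \begin{pmatrix} 1 & u \\ u & u^2 \end{pmatrix} = \begin{pmatrix} 1 \\ u \end{pmatrix}\begin{pmatrix} 1 & u \end{pmatrix} \geq 0
\]
in $M_2(\mathcal{M})$. Since $\Phi$ is in particular $2$-positive, applying $\Phi_2$ produces a positive block matrix whose $(1,1)$-entry is $\mathbf{1}$, and \eqref{schur_complement_formula} immediately yields $\Phi(u^2) \geq \Phi(u)^2$. This is the operator-algebra form of Kadison's inequality.

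For $p\in(1,2)$, I would invoke the integral representation
\[
    t^p = \frac{\sin(\pi(p-1))}{\pi}\int_0^\infty \frac{t^2}{s+t}\,s^{p-2}\,ds, \qquad t \geq 0,
\]
obtained by multiplying the standard formula $t^{p-1} = \frac{\sin(\pi(p-1))}{\pi}\int_0^\infty \frac{s^{p-2}t}{s+t}\,ds$ (valid for $p-1 \in (0,1)$) by $t$. Since $t^2/(s+t) \leq \min(t,\,t^2/s)$, the integrand is $s^{p-2}$-integrable on $(0,\infty)$ precisely for $p\in(1,2)$, so after lifting to the bounded positive operator $u$ via continuous functional calculus one obtains the norm-convergent Bochner integral
\[
    u^p = \frac{\sin(\pi(p-1))}{\pi}\int_0^\infty u^2(s+u)^{-1}\,s^{p-2}\,ds.
\]
For each fixed $s>0$, the Schur complement of $s+u$ in
\[
    \begin{pmatrix} s+u & u \\ u & u^2(s+u)^{-1} \end{pmatrix}
\]
vanishes identically, so by \eqref{schur_complement_formula} this block is positive in $M_2(\mathcal{M})$. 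Applying $\Phi_2$ produces another positive block whose $(1,1)$-entry $s+\Phi(u)$ is invertible, and \eqref{schur_complement_formula} then gives
\[
    \Phi\bigl(u^2(s+u)^{-1}\bigr) \geq \Phi(u)^2\bigl(s+\Phi(u)\bigr)^{-1}, \qquad s > 0.
\]
Integrating against $\frac{\sin(\pi(p-1))}{\pi}\,s^{p-2}\,ds$, exchanging $\Phi$ with the Bochner integral on the left-hand side, and identifying both sides via the integral representation concludes that $\Phi(u^p) \geq \Phi(u)^p$.

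The only delicate step I foresee is justifying the interchange of $\Phi$ with the Bochner integral and the norm convergence of the functional-calculus representation; both are routine once one records the uniform bound $\|u^2(s+u)^{-1}\| \leq \min(\|u\|,\,\|u\|^2/s)$ and uses that $\Phi$ is bounded (indeed contractive, being unital and positive). Conceptually this proof is a concrete incarnation of the Choi--Davis operator Jensen inequality applied to the operator convex function $t\mapsto t^p$ on $[0,\infty)$ for $p\in[1,2]$, but carried out by hand so as to rely only on the $2$-positivity of $\Phi$ and the Schur complement identity already stated in the text.
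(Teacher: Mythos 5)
Your proof is correct and follows the same high-level strategy as the paper: Kadison's inequality for $p=2$ via a positive $2\times 2$ block and the Schur complement criterion \eqref{schur_complement_formula}, followed by an integral representation of $t\mapsto t^p$ reducing the general case to a per-parameter resolvent inequality, itself obtained by applying $\Phi_2$ to a positive block matrix. The one place where you diverge is worth noting: the paper first establishes $\Phi((t\cdot\mathbf{1}+u)^{-1}) \geq (t\cdot\mathbf{1}+\Phi(u))^{-1}$ from the block $\begin{pmatrix} t\cdot\mathbf{1}+u & \mathbf{1} \\ \mathbf{1} & (t\cdot\mathbf{1}+u)^{-1}\end{pmatrix}$, and then transfers this to the needed bound $\Phi(u^2(t\cdot\mathbf{1}+u)^{-1}) \geq \Phi(u)^2(t\cdot\mathbf{1}+\Phi(u))^{-1}$ via the algebraic identity $A^2(t\cdot\mathbf{1}+A)^{-1} = A - t\cdot\mathbf{1} + t^2(t\cdot\mathbf{1}+A)^{-1}$. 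You instead go directly for the block
\[
    \begin{pmatrix} s\cdot\mathbf{1}+u & u \\ u & u^2(s\cdot\mathbf{1}+u)^{-1} \end{pmatrix},
\]
whose Schur complement vanishes by functional calculus, and a single application of $\Phi_2$ and \eqref{schur_complement_formula} lands on the target inequality without the intermediate step. This is a modest but genuine simplification. I would also point out that your integral representation
\[
    t^p = \frac{\sin(\pi(p-1))}{\pi}\int_0^\infty \frac{t^2}{s+t}\,s^{p-2}\,ds
\]
is the correct one; the exponent $t^{1-p}$ printed in the paper's formula appears to be a typo for $t^{p-2}$ (one can check by scaling that the printed version produces $x^{3-p}$, not $x^p$; the constants $|\sin(p\pi)|$ and $\sin(\pi(p-1))$ agree for $p\in(1,2)$). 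Your treatment of the interchange of $\Phi$ with the Bochner integral, via the uniform bound $\|u^2(s\cdot\mathbf{1}+u)^{-1}\|\leq \min(\|u\|,\|u\|^2/s)$ and the contractivity of the unital positive map $\Phi$, is also in order.
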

\begin{proof}
If $p=1$ there is nothing to prove. For $p=2$ and $ 0\leq u\in \mathcal{M},$ the matrix
     \[
        \begin{pmatrix} u^2 & u \\ u & \mathbf{1}_{\mathcal M}\end{pmatrix}
    \]
    is positive definite in $\mathcal{M}\otimes M_2(\mathbb{C}).$
    Here, $\mathbf{1}_{\mathcal M}$ is the identity element of $\mathcal{M}.$ Therefore,
    \[
        \begin{pmatrix} \Phi(u^2) & \Phi(u) \\ \Phi(u) & \mathbf{1}_{\mathcal M}\end{pmatrix}\geq 0, \,\  0\leq u\in \mathcal{M}.
    \]
    It follows from \eqref{schur_complement_formula} that $\Phi(u^2)\geq \Phi(u)^2.$ This is the $p=2$ case. Assume now that $1<p<2.$

    Let $t>0.$ The matrix
    \[
        \begin{pmatrix} t+u & \mathbf{1}_{\mathcal M} \\ \mathbf{1}_{\mathcal M} & (t\cdot \mathbf{1}_{\mathcal M}+u)^{-1}\end{pmatrix},
    \]
    is positive in $\mathcal{M}\otimes M_2(\mathbb{C}).$
    Since $\Phi$ is a unital completely positive map we have
    \[
        \begin{pmatrix} \Phi(t\cdot\mathbf{1}_{\mathcal M}+u) & \mathbf{1}_{\mathcal M} \\ \mathbf{1}_{\mathcal M} & \Phi ((t\cdot\mathbf{1}_{\mathcal M}+u)^{-1}) \end{pmatrix}\geq 0, \,\ 0\leq u\in \mathcal{M},
    \]
    in $\mathcal{M}\otimes M_2(\mathbb{C}).$ Therefore, again using \eqref{schur_complement_formula}, we have
    \[
        \Phi((t\cdot\mathbf{1}_{\mathcal M}+u)^{-1}) \geq \Phi(t\cdot\mathbf{1}_{\mathcal M}+u)^{-1} = (t\cdot\mathbf{1}_{\mathcal M}+\Phi(u))^{-1}, \,\  0\leq u\in \mathcal{M},
    \]
    where in the last equality we used the fact that $\Phi(t\cdot\mathbf{1}_{\mathcal M}+u)=t\cdot\mathbf{1}_{\mathcal M}+\Phi(u)$ for any $t>0$ and $0\leq u\in \mathcal{M}.$
    Since for any operator $A$ we have
    \[
        A^2(t\cdot\mathbf{1}_{\mathcal M}+A)^{-1} = A-t\cdot\mathbf{1}_{\mathcal M}+t^2(t\cdot\mathbf{1}_{\mathcal M}+A)^{-1},
    \]
    it follows that
    \[
        \Phi(u^2(t\cdot\mathbf{1}_{\mathcal M}+u)^{-1})\geq \Phi(u)^2(t\cdot\mathbf{1}_{\mathcal M}+\Phi(u))^{-1}, \,\  0\leq u\in \mathcal{M}.
    \]
    Using the integral formula
    \[
        x^{p} = \frac{|\sin(p\pi)|}{\pi}\int_0^\infty t^{1-p}\frac{x^2}{t+x}\,dt,\quad 1<p< 2, \,\  0\leq x\in \mathcal{M},
    \]
    we conclude
    \[
        \Phi(u^p) \geq \Phi(u)^p, \,\  0\leq u\in \mathcal{M}.
    \]    
    This completes the proof.
\end{proof}
Now, let $\mathcal{M} = L^{\infty}(\mathbb{R}^d_\theta),$ and let $\Delta_{\theta}$ be the Laplacian on $L^{\infty}(\mathbb{R}^d_\theta).$
\begin{cor}\label{heat-cp-map}
The map
$$
\Phi_t(u) = e^{-t\Delta_{\theta}}(u),\quad u\in L^{\infty}(\mathbb{R}^d_\theta)
$$
is a unital completely positive map.
\end{cor}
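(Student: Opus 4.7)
The plan is to read $\Phi_t$ off its convolution representation from Lemma~\ref{Lemma_1}, namely
$$
\Phi_t(u) \;=\; G_t\ast u \;=\; \int_{\mathbb{R}^d} G_t(\eta)\, T_{-\eta}(u)\,d\eta,
$$
and to exhibit it as the integral average of the family of $*$-automorphisms $\{T_{-\eta}\}_{\eta\in\mathbb{R}^d}$ against the positive probability measure $G_t(\eta)\,d\eta$.

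First I would record two basic facts. Each translation $T_{-\eta}$ is a $*$-automorphism of $L^\infty(\mathbb{R}^d_\theta)$ (Definition~\ref{Definition_1}), hence unital and completely positive; in particular, for every $n\ge 1$ the matrix amplification $(T_{-\eta})_n$ is a $*$-automorphism of $M_n(L^\infty(\mathbb{R}^d_\theta))$ and therefore preserves positivity. Second, the Gaussian $G_t$ is non-negative and satisfies $\int_{\mathbb{R}^d} G_t(\eta)\,d\eta = \widehat{G_t}(0) = 1$, so $G_t(\eta)\,d\eta$ is a Borel probability measure on $\mathbb{R}^d$.

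Unitality will follow by computing $\Phi_t(I) = \int G_t(\eta)\, T_{-\eta}(I)\,d\eta = \left(\int G_t(\eta)\,d\eta\right) I = I$ in the weak$^*$ sense, using that $T_{-\eta}(I)=I$. For complete positivity, I would fix $n\ge 1$ and a positive matrix $[u_{ij}]\in M_n(L^\infty(\mathbb{R}^d_\theta))$, apply $\Phi_t$ entrywise, and use that convolution commutes with linear combinations, to obtain
$$
(\Phi_t)_n\bigl([u_{ij}]\bigr) \;=\; \bigl[G_t\ast u_{ij}\bigr] \;=\; \int_{\mathbb{R}^d} G_t(\eta)\,(T_{-\eta})_n\bigl([u_{ij}]\bigr)\,d\eta.
$$
Since $(T_{-\eta})_n([u_{ij}])\ge 0$ and $G_t(\eta)\ge 0$, the integrand is a positive weak$^*$-measurable function, and its weak$^*$ integral is positive (verified by pairing against an arbitrary positive element of the predual $M_n(L^1(\mathbb{R}^d_\theta))$). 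Hence $(\Phi_t)_n([u_{ij}])\ge 0$, completing the proof.

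There is no substantive obstacle in this argument: the only care required is in justifying the interchange of the weak$^*$ integral with the matrix amplification, which is routine once one recognises that $M_n(L^\infty(\mathbb{R}^d_\theta))$ is itself a semifinite von Neumann algebra with predual $M_n(L^1(\mathbb{R}^d_\theta))$ under the matrix trace.
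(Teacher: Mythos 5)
Your proposal is correct and coincides with the paper's own argument: both express $\Phi_t(u) = G_t\ast u$ as an average of the $*$-automorphisms $T_{-\eta}$ against the probability density $G_t$, check unitality from $T_{-\eta}(I)=I$ and $\int G_t = 1$, and obtain complete positivity by amplifying the translations to $M_n(L^\infty(\mathbb{R}^d_\theta))$ and noting the integrand stays positive. Your write-up is somewhat more explicit about the weak$^*$ interchange and the predual pairing, but the underlying reasoning is the same.
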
 
\begin{proof}
    We have $T_{\eta}I = I,$ so
    $$
        e^{-t\Delta_{\theta}}(I) = \int\limits_{\mathbb{R}^d} G_t(\eta)I\,d\eta = I,
    $$
where $I$ is the identity operator in $L^{\infty}(\mathbb{R}^d_{\theta}).$ Note that if $u\geq 0,$ then $T_{\eta}u\geq 0$ for all $\eta\in \mathbb{R}^d.$ Hence,
    $$
        e^{-t\Delta_{\theta}}(u)\geq 0.
    $$
    The same holds for $T_{\eta}\otimes I_{M_N(\mathbb{C})}$ for any $N\geq 1.$ Since $\Phi_t(u) = G_t\ast u$ is a closed convex combination of translation maps, it follows that $\Phi$ is a positive unital linear map on $L^\infty(\mathbb{R}^d_\theta).$
\end{proof}

The following Lemma \ref{Jensen-inequality} can be derived as direct consequences of Proposition \ref{heat-cp-map} and the inequality \eqref{inequality_1} which play a key role in the proof of Lemma \ref{Lemma5_5}. 
\begin{lem}\label{Jensen-inequality} Assume that  $0\leq u\in   L^\infty(\mathbb{R}^d_{\theta})$ and   $1<p<2.$  Then,  for $t>0$ we have   
\begin{equation}\label{inequality_4}
 \left(e^{-t\Delta_{\theta}}(u)\right)^p\le e^{-t\Delta_{\theta}}(u^p) , \quad t>0.
\end{equation}
\end{lem}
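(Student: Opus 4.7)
The plan is to observe that Lemma \ref{Jensen-inequality} is an immediate consequence of the two ingredients highlighted just above it: Corollary \ref{heat-cp-map} identifies $\Phi_t := e^{-t\Delta_\theta}$ as a unital completely positive map on $\mathcal{M} = L^\infty(\mathbb{R}^d_\theta)$, and Proposition \ref{general_vna_fact} gives the operator Jensen-type inequality $\Phi(u^p) \geq \Phi(u)^p$ for every unital completely positive $\Phi:\mathcal{M}\to\mathcal{M}$, every $0\leq u\in\mathcal{M}$, and every $1\leq p\leq 2$.

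First I would check that the ambient setting matches: since $0\leq u\in L^\infty(\mathbb{R}^d_\theta)$, the Borel functional calculus gives $u^p\in L^\infty(\mathbb{R}^d_\theta)$, so $\Phi_t(u^p)$ is well defined as an element of $L^\infty(\mathbb{R}^d_\theta)$, and likewise $\Phi_t(u)^p$ makes sense because $\Phi_t(u)\geq 0$ by positivity of $\Phi_t$. Next I would invoke Corollary \ref{heat-cp-map} to supply the unital complete positivity hypothesis required by Proposition \ref{general_vna_fact}. Applying \eqref{inequality_1} with $\Phi=\Phi_t$ then yields
\[
e^{-t\Delta_\theta}(u^p) = \Phi_t(u^p) \geq \Phi_t(u)^p = \bigl(e^{-t\Delta_\theta}(u)\bigr)^p,
\]
which is exactly \eqref{inequality_4}.

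There is essentially no obstacle here; the statement is simply the specialisation of the general operator-convexity inequality in Proposition \ref{general_vna_fact} to the heat semigroup. The only point worth mentioning, in case a reader questions it, is that the hypothesis $1<p<2$ is covered by Proposition \ref{general_vna_fact}'s range $1\leq p\leq 2$, so the endpoint restriction in Lemma \ref{Jensen-inequality} is not an issue. The proof can therefore be written as a single line citing these two previous results.
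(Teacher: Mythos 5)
Your proposal is correct and follows exactly the same route as the paper: the paper's proof is the one-line combination of Proposition \ref{general_vna_fact} and Corollary \ref{heat-cp-map}, precisely as you describe. Your additional remarks on well-definedness and the range of $p$ are sound but not needed beyond what the paper states.
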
 
\begin{proof}  Combining Proposition \ref{general_vna_fact} with Corollary~\ref{heat-cp-map}, we derive the inequality \eqref{inequality_4}. 
 \end{proof}

\section*{Acknowledgements}
K.T. was partially supported by the Australian Research Council (ARC) grant FL17010005. M.R. was supported by the EPSRC grant UKRI3645, and by the Methusalem grant (01M01021 BOF Methusalem). S. Shaimardan would like to thank to Professor Berikbol Torebek for the helpful discussion on the classical Fujita's result. E.M. was supported by the Agence Nationale de la Recherche (ANR) project OpART (Operator Algebras and Representation Theory) and the Alexander von Humboldt Foundation. Thanks are due to Deyu Chen, who pointed out a critical error in an earlier version of this paper.

\begin{center}

\end{center}

\end{document}